\numberwithin{equation}{section}
\newtheorem{thm}{Theorem}[section]
\newtheorem{cor}[thm]{Corollary}
\newtheorem{lem}[thm]{Lemma}
\newtheorem{prop}[thm]{Proposition}
\theoremstyle{definition}\newtheorem{defn}[thm]{Definition}
\theoremstyle{definition}\newtheorem{rem}[thm]{Remark}
\theoremstyle{definition}\newtheorem{ex}[thm]{Example}
\newcommand{\GF}{{\rm GF}}
\newcommand{\PSL}{{\rm PSL}}
\newcommand{\GL}{{\rm GL}}
\newcommand{\PGL}{{\rm PGL}}
\newcommand{\AGL}{{\rm AGL}}
\newcommand{\Sym}{{\rm Sym}}
\newcommand{\liso}{\lesssim}
\newcommand\Wr{\mathrel{\rm wr }}
\renewcommand\le{\leqslant}
\renewcommand\ge{\geqslant}
\renewcommand{\exp}{{\rm exp}}
\newcommand{\Z}{\mathbb{Z}}
\newcommand{\Ker}{{\rm Ker}}
\newcommand{\wexp}{weakly exponential}
\newcommand{\wic}{wexp-solvable}
\newcommand{\Wic}{Weakly exponential-solvable}
\newcommand{\indexc}{wexp-nonsolvable}
\newcommand{\IEe}{minimal wexp-nonsolvable}
\newcommand{\exptriv}{exp-trivial}
\newcommand{\expsimp}{exp-simple}
\newcommand{\lexp}{\le_{exp}}
\newcommand{\lwexp}{\le_{wexp}}
\begin{document}

\title{Exponential and weakly exponential subgroups of finite groups}

\author{Eric Swartz}
\address{Department of Mathematics, William \& Mary, Williamsburg, VA 23187}
\email{easwartz@wm.edu}

\author{Nicholas J. Werner}
\address{Department of Mathematics, Computer and Information Science, SUNY at Old Westbury, Old Westbury, NY 11568}
\email{wernern@oldwestbury.edu}

\begin{abstract}
Sabatini \cite{Sabatini_2024} defined a subgroup $H$ of $G$ to be an \textit{exponential subgroup} if $x^{|G:H|} \in H$ for all $x \in G$, in which case we write $H \lexp G$.  Exponential subgroups are a generalization of normal (and subnormal) subgroups: all subnormal subgroups are exponential, but not conversely.  Sabatini proved that all subgroups of a finite group $G$ are exponential if and only if $G$ is nilpotent.  The purpose of this paper is to explore what the analogues of a simple group and a solvable group should be in relation to exponential subgroups.  We say that an exponential subgroup $H \lexp G$ is \textit{\exptriv{}} if either $H = G$ or the exponent of $G$, $\exp(G)$, divides $|G:H|$, and we say that a group $G$ is \textit{\expsimp{}} if all exponential subgroups of $G$ are \exptriv.  We classify finite \expsimp{} groups by proving $G$ is \expsimp{} if and only if $\exp(G) = \exp(G/N)$ for all proper normal subgroups $N$ of $G$, and we illustrate how the class of \expsimp{} groups differs from the class of simple groups.  Furthermore, in an attempt to overcome the obstacle that prevents all subgroups of a generic solvable group from being exponential, we say that a subgroup $H$ of $G$ is \textit{weakly exponential} if, for all $x \in G$, there exists $g \in G$ such that $x^{|G:H|} \in H^g$.  If all subgroups of $G$ are weakly exponential, then $G$ is \textit{\wic{}}.  We prove that all solvable groups are \wic{} and almost all symmetric and alternating groups are not \wic{}.  Finally, we completely classify the groups $\PSL(2,q)$ that are \wic{}. We show that if $\pi(n)$ denotes the number of primes less than $n$ and $w(n)$ denotes the number of primes $p$ less than $n$ such that $\PSL(2,p)$ is \wic,   then
 \[ \lim_{n \to \infty} \frac{w(n)}{\pi(n)} = \frac{1}{4}.\]
\end{abstract}

\maketitle

\section{Introduction}

In this paper, all groups are finite. It is a basic property of a subnormal subgroup $H\mathrel{\lhd\lhd}G$ that $x^{|G:H|} \in H$ for all $x \in G$. Subgroups that are not (sub)normal may or may not exhibit this behavior. For instance, the dihedral group $D_4$ of order 8 contains a nonnormal subgroup $H$ of index 4, but $x^4 \in H$ for all $x \in D_4$. On the other hand, if $x$ and $y$ are distinct transpositions in the symmetric group $S_3$, then $x^3 \notin \langle y \rangle$.

\begin{defn}
 Let $G$ be a finite group.  A subgroup $H \le G$ is said to \textit{exponential} with respect to $G$ if, for all $x \in G$, $x^{|G:H|} \in H$. In this case, following \cite{Sabatini_2024}, we write $H \lexp G$. 
\end{defn}

It was shown in \cite[Proposition 5.5]{Sabatini_2024} (see also \cite{Werner_2024}) that, for a finite group $G$, $H \lexp G$ for all subgroups $H \le G$ if and only if $G$ is nilpotent. This prompts the question of whether the exponential subgroups, or variations thereof, can be used to identify other classes of finite groups. Our purpose in this paper is to investigate this problem, with an emphasis on simple groups and solvable groups.

As noted above, the exponential property is a weak form of subnormality: if $N\mathrel{\lhd \lhd}G$, then $N$ is exponential with respect to $G$, but not conversely.  The question then becomes: how much weaker than normality is the exponential property?  Just as every group has trivial normal subgroups, there is a class of subgroups that trivially are exponential; for example, $G \lexp G$. Denote the exponent of $G$ by $\exp(G)$. Then, it is clear that $H$ is exponential with respect to $G$ whenever $\exp(G)$ divides $|G:H|$, and that $G$ always contains such subgroups. As a simple group is one that contains no nontrivial normal subgroups, a group that is ``simple'' with respect to the exponential property should contain as few exponential subgroups as possible.

\begin{defn}
 Let $H \le G$.  We say that $H$ is \textit{exponential-trivial} (or \textit{\exptriv{}}) if either $H = G$ or $\exp(G)$ divides $|G:H|$. A group $G$ is said to be \textit{exponential-simple} (or \textit{\expsimp{}}) if the only exponential subgroups of $G$ \exptriv{}.
\end{defn}

Which groups are \expsimp{}?  How does the class of simple groups compare to the class of \expsimp{} groups?  We are able to characterize \expsimp{} groups and answer these questions precisely.

\begin{thm}
 \label{thm:charIIPsimple}
 Let $G$ be a finite group.  Then, $G$ is \expsimp{} if and only if $\exp(G) = \exp(G/N)$ for all proper normal subgroups $N$ of $G$.
\end{thm}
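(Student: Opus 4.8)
The plan is to prove the two implications separately; the backward one is short, while the forward one, which I would prove by contraposition, carries the real content. For the backward direction, assume $\exp(G/N)=\exp(G)$ for every proper normal $N$, take any $H\lexp G$ with $H\ne G$, and set $n=|G:H|$ and $L=\langle x^{n}:x\in G\rangle$. Since the generating set $\{x^{n}:x\in G\}$ is closed under conjugation, $L\norml G$; since $H$ is exponential, $x^{n}\in H$ for all $x$, so $L\le H$ and in particular $L$ is proper. Every element of $G/L$ has order dividing $n$, so $\exp(G/L)\mid n$; by hypothesis $\exp(G/L)=\exp(G)$, hence $\exp(G)\mid n=|G:H|$, i.e.\ $H$ is \exptriv{}. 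Thus $G$ is \expsimp{}.

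For the forward direction I would argue the contrapositive. Suppose some proper normal $N$ has $\exp(G/N)\ne\exp(G)$; since $\exp(G/N)\mid\exp(G)$ always, this means $e:=\exp(G/N)$ is a proper divisor of $m:=\exp(G)$, and I will produce a proper exponential subgroup of $G$ that is not \exptriv{}, so that $G$ is not \expsimp{}. Write $\bar G=G/N$, choose a prime $p$ with $v_p(e)<v_p(m)$ (possible as $e<m$ and $e\mid m$), and put $b=v_p(e)$, $a=v_p(m)$, $c=v_p(|\bar G|)$; note $b\le c$ since $\exp(\bar G)=e$ divides $|\bar G|$. Inside a Sylow $p$-subgroup of $\bar G$, pick a subgroup $\bar H_0$ of order $p^{c-b}$ — this exists because a group of order $p^{c}$ has subgroups of every order $p^{j}$ with $0\le j\le c$. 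Then $|\bar G:\bar H_0|$ has $p$-part exactly $p^{b}$ and, at every prime $q\ne p$, $q$-part equal to that of $|\bar G|$, which dominates the $q$-part of $e$; hence $e\mid|\bar G:\bar H_0|$, so $x^{|\bar G:\bar H_0|}=1$ for all $x\in\bar G$ and $\bar H_0\lexp\bar G$. Since $v_p(|\bar G:\bar H_0|)=b<a$, we have $m\nmid|\bar G:\bar H_0|$, and $\bar H_0\ne\bar G$ (a quick check using $N\ne G$). Taking $H$ to be the full preimage of $\bar H_0$ in $G$, we get $N\le H<G$ and $|G:H|=|\bar G:\bar H_0|$, and $H\lexp G$ because $\bar x^{|G:H|}\in\bar H_0$ lifts to $x^{|G:H|}\in H$ for every $x\in G$. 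As $H\ne G$ and $\exp(G)\nmid|G:H|$, the subgroup $H$ is not \exptriv{}.

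The routine ingredients here are the normality of $L$, the elementary fact about subgroups of $p$-groups, and the divisibility bookkeeping. The one genuinely delicate step is the construction in the forward direction: turning the bare existence of an exponent-dropping quotient $G/N$ into an honest non-\exptriv{} exponential subgroup. The first thing one tries, the preimage of a Sylow $p$-subgroup of $\bar G$, does not work, since a Sylow subgroup need not be exponential in the ambient group. The remedy is to stay inside the Sylow $p$-subgroup of the \emph{quotient} $\bar G$, where the exponent has already fallen to $e$, and cut it down to a subgroup whose index is a multiple of $e$ — which makes it automatically exponential — while keeping its $p$-part below $v_p(\exp G)$ by using precisely the prime at which $\exp(G/N)$ dropped. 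A small argument, and the only use of the hypothesis $N\ne G$, is needed to confirm the subgroup produced is proper.
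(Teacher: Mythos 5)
Your proposal is correct and follows essentially the same route as the paper: the ``only if'' direction is the same construction (inside $\bar G=G/N$, cut a Sylow $p$-subgroup at the prime where the exponent drops down to a subgroup whose index has $p$-part $p^{b}$, so that $\exp(\bar G)$ divides the index but $\exp(G)$ does not, and pull back), merely phrased as a contrapositive rather than a contradiction. The ``if'' direction likewise matches: your normal subgroup $L=\langle x^{|G:H|}:x\in G\rangle$ plays exactly the role of the paper's normal core of $H$ (Lemma~\ref{lem:core}), giving a proper normal subgroup modulo which the exponent divides $|G:H|$.
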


Using this, we can prove the following:

\begin{cor}
 \label{cor:IIPsimple}
 Let $G$ be a finite group.
 \begin{enumerate}[(1)]
  \item If $G$ is simple, then $G$ is \expsimp{}.
  \item If $G$ is solvable, then $G$ is \expsimp{} if and only if $G$ is a $p$-group with exponent $p$.
  \item If $G$ is the direct product of simple groups, then $G$ is \expsimp{} if and only if each simple direct factor of $G$ has the same exponent.
  \item If $G$ is quasisimple, then $G$ is \expsimp{} if and only if $\exp(G) = \exp(G/Z(G))$.
 \end{enumerate}
\end{cor}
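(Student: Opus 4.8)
The plan is to obtain all four parts as consequences of Theorem~\ref{thm:charIIPsimple}, which says that $G$ is \expsimp{} precisely when no proper quotient of $G$ has exponent strictly smaller than $\exp(G)$. Two elementary remarks will be used repeatedly: if $G$ surjects onto $Q$ then $\exp(Q)$ divides $\exp(G)$, so the condition in Theorem~\ref{thm:charIIPsimple} is equivalent to requiring $\exp(G) \mid \exp(G/N)$ for every proper normal subgroup $N$; and the exponent of a direct product is the least common multiple of the exponents of the factors. Part~(1) is then immediate: a simple group has only one proper normal subgroup, the trivial one $1$, and $\exp(G/1) = \exp(G)$. For part~(2), if $G$ is a $p$-group of exponent $p$ and $N \lhd G$ is proper, then $G/N$ is a nontrivial $p$-group, so $1 < \exp(G/N) \le p = \exp(G)$ and equality holds; conversely, if $G$ is solvable, nontrivial, and \expsimp{}, then choosing $N$ maximal among proper normal subgroups makes $G/N$ a simple solvable group, hence cyclic of prime order $p$, so $\exp(G) = \exp(G/N) = p$, and Cauchy's theorem then forces $G$ to be a $p$-group (necessarily of exponent $p$).

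For part~(3), write $G = S_1 \times \cdots \times S_k$ with each $S_i$ simple. If $G$ is \expsimp{}, then applying the criterion to the normal subgroup $M_i = \prod_{j \ne i} S_j$, for which $G/M_i \cong S_i$, gives $\exp(S_i) = \exp(G)$ for every $i$, so all factors share the common exponent $\exp(G)$. For the converse, suppose every $S_i$ has exponent $e$, so $\exp(G) = e$, and let $N \lhd G$ be proper. Choose a maximal normal subgroup $M$ with $N \le M$; the key point is that the simple quotient $G/M$ must be isomorphic to one of the $S_i$, because if $\varphi\colon G \twoheadrightarrow Q$ has $Q$ simple then each $\varphi(S_i)$ is normal in $Q$, hence trivial or all of $Q$, and since the $S_i$ generate $G$ some $\varphi|_{S_i}$ is onto and therefore an isomorphism. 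Hence $\exp(G/M) = e$, and as $G/N$ surjects onto $G/M$ we get $e \mid \exp(G/N) \mid \exp(G) = e$, so $\exp(G/N) = \exp(G)$ and Theorem~\ref{thm:charIIPsimple} applies.

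For part~(4), recall that the proper normal subgroups of a quasisimple group $G$ are exactly the subgroups of $Z(G)$: if $N \lhd G$ is proper and $N \not\le Z(G)$, then $NZ(G)/Z(G)$ is a nontrivial normal subgroup of the simple group $G/Z(G)$, so $NZ(G) = G$, whence $G/N \cong Z(G)/(N \cap Z(G))$ is abelian and $N \ge G' = G$, a contradiction. For any $N \le Z(G)$ there are surjections $G \twoheadrightarrow G/N \twoheadrightarrow G/Z(G)$, so $\exp(G/Z(G)) \mid \exp(G/N) \mid \exp(G)$; thus $\exp(G/N) = \exp(G)$ for all such $N$ if and only if it holds for $N = Z(G)$, which is exactly the condition $\exp(G) = \exp(G/Z(G))$. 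In all four parts the real content is Theorem~\ref{thm:charIIPsimple}; the only steps needing any care are the two structural inputs just used — that a simple quotient of a direct product of simple groups is one of the factors, and that the normal subgroups of a quasisimple group other than $G$ itself lie in the centre — and I would record the short arguments above rather than merely cite them.
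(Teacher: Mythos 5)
Your proposal is correct and takes essentially the same approach as the paper: all four parts are deduced directly from Theorem~\ref{thm:charIIPsimple}, with the same choices of normal subgroups (the projection kernels in (3), a prime-order simple quotient in (2), and $Z(G)$ in (4)). The only noteworthy divergence is in the converse of part (3), where the paper invokes the structural fact that every quotient of a direct product of simple groups is again a direct product of some of the factors, while you avoid this by passing to a maximal normal subgroup $M \supseteq N$, noting that the simple quotient $G/M$ is isomorphic to one factor, and sandwiching $e \mid \exp(G/N) \mid \exp(G) = e$ --- a slightly more self-contained route to the same conclusion.
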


See Section \ref{sect:IIPsimple} for the proofs of Theorem \ref{thm:charIIPsimple} and its corollary.


Recall that all subgroups $H$ of $G$ are exponential if and only if $G$ is nilpotent.  We would like to modify the definition of exponential subgroups in such a way as to capture all solvable groups.  Consider the following example.

\begin{ex}
\label{ex:easy}
 Let $G = S_3$.  This group will have subgroups that are not exponential since $G$ has more than one Sylow $2$-subgroup. Indeed, take $x = (1 \; 2)$ and $H = \langle (2 \; 3) \rangle$.  Then, $|G:H| = 3$, and hence $x^{|G:H|} = x \notin H$, showing that $H$ is not exponential.  We will run into such a problem whenever a group $G$ has more than one Sylow $p$-subgroup. Suppose $P_1 \neq P_2$ are such subgroups and $x \in P_1 \backslash P_2$.  Then, $|G:P_2|$ is coprime to $p$, so $\langle x^{|G:P_2|} \rangle = \langle x \rangle$. Thus, $x^{|G:P_2|} \notin P_2$.
\end{ex}

To avoid the obstruction discussed in Example \ref{ex:easy}, we can use the fact that all Sylow $p$-subgroups of a finite group are conjugate. This leads to the following natural generalization of exponential subgroups.

\begin{defn}
 Let $G$ be a finite group.  A subgroup $H \le G$ is said to be \textit{\wexp{}} with respect to $G$ if, for all $x \in G$, there exists $g \in G$ such that $x^{|G:H|} \in H^g$.  In this case, we write $H \lwexp G$.
\end{defn} 
 

It should be immediately observed that all subgroups of $S_3$ are \wexp{}, and, indeed, if $P$ is a Sylow $p$-subgroup of $G$, we will have $P \lwexp G$, fixing the issues raised in Example \ref{ex:easy}.  With this in mind, we make the following definition.

\begin{defn}
 If every subgroup of $G$ is \wexp, then $G$ is called a \textit{weakly exponential-solvable group} (or a \textit{\wic{} group}).  If $G$ is not \wic{}, then $G$ is said to be a \textit{weakly exponential-nonsolvable group} (or \textit{\indexc{} group}).
\end{defn}

As the next theorem shows, the use of the term ``solvable'' in ``\wic'' is justified.
 
 
\begin{thm}
 \label{thm:solvwic}
 All finite solvable groups are \wic{} groups.
\end{thm}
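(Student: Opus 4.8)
The plan is to induct on $|G|$. Fix a proper subgroup $H < G$ (if $H = G$ there is nothing to prove), set $n = |G:H|$, and fix $x \in G$; the goal is to find $g \in G$ with $x^n \in H^g$, equivalently a $G$-conjugate of $x^n$ lying in $H$. Since $G$ is solvable and nontrivial, I would begin by choosing a minimal normal subgroup $N$ of $G$: it is an elementary abelian $p$-group for some prime $p$, and --- being minimal normal, with $N$ acting trivially on itself by conjugation --- it is an \emph{irreducible} $\F_p[G/N]$-module.

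The first reductions pass to $G/N$. If $N \le H$, then $|G/N : H/N| = n$, so by induction there is $g$ with $(xN)^n \in (H/N)^{gN} = H^g/N$, and as $N \le H^g$ this forces $x^n \in H^g$. If instead $N \not\le H$ but $L := HN$ is proper in $G$, write $n = md$ with $m = |G:L|$ and $d = |L:H| = |N : N\cap H|$; induction on $G/N$ yields $g$ with $x^m \in L^g$, and then induction applied inside the proper solvable subgroup $L^g$ (in which $H^g$ has index $d$) yields $g'$ with $x^n = (x^m)^d \in (H^g)^{g'}$, a $G$-conjugate of $H$. So we may assume $N \not\le H$ and $HN = G$. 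Now $N\cap H$ is normalized by $H$ and centralized by the abelian group $N$, hence normal in $HN = G$; minimality forces $N\cap H = 1$, so $G = N\rtimes H$ and $n = |N|$ is a power of $p$.

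Next I would cut $H$ down to a cyclic group. Writing $x = uh$ with $u\in N$, $h\in H$, put $M = N\langle h\rangle$; from $N\cap H = 1$ one gets $M = N(M\cap H)$, hence $M\cap H = \langle h\rangle$ with $|M : M\cap H| = |N| = n$. If $M < G$ we are done by induction inside $M$ (getting a conjugate of $x^n$ inside $\langle h\rangle \le H$), so we may assume $M = G$, i.e.\ $H = \langle h\rangle$ and $G = N\rtimes\langle h\rangle$ with $N$ irreducible over $\F_p[\langle h\rangle]$. This leaves the core computation. Writing $N$ additively and letting $h$ act by conjugation, the standard expansion of $(uh)^{p^k}$ (where $n = p^k$) gives $x^n = (S\cdot u)\,h^{p^k}$ with $S = 1 + h + \cdots + h^{p^k-1} \in \F_p[\langle h\rangle]$, and conjugating $x^n$ by $v \in N$ replaces its $N$-component by $Su + (1-h^{p^k})v$. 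Since $1 - h^{p^k} = (1-h)S$, it suffices that $Su$ lie in $(1-h^{p^k})N = (1-h)(SN)$. But $SN$ is a submodule of the irreducible module $N$: if $SN = 0$ then $Su = 0$ and already $x^n = h^{p^k}\in H$; if $SN = N$ then $h$ acts nontrivially on $N$ (otherwise $S$ would act as $p^k\cdot\mathrm{id} = 0$), so $(1-h)N$ is a nonzero submodule and equals $N$ by irreducibility, whence $Su \in (1-h^{p^k})N$ and a suitable $v$ gives $v\,x^n\,v^{-1} = h^{p^k}\in H$. This closes the induction; any genuinely nilpotent configurations that surface along the way (for instance $N$ central, which forces $|N| = p$) are in any case handled directly, or by the cited fact that every subgroup of a nilpotent group is exponential.

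I expect the reductions --- to $G/N$, then to the split extension $G = N\rtimes H$, then to $H$ cyclic --- to be essentially routine bookkeeping. The main obstacle, and the step deserving the most care, is the last: recognizing that \emph{after} forcing $H = \langle h\rangle$ the module $N$ really is irreducible over $\F_p[\langle h\rangle]$ (this is why peeling $H$ down to $\langle h\rangle$ is essential, not cosmetic), and then using the factorization $1 - h^{p^k} = (1-h)(1+h+\cdots+h^{p^k-1})$ to conjugate away the bottom component of $x^n$. I would verify the conjugation formula and the dichotomy for $SN$ explicitly.
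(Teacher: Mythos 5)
Your proof is correct, but it takes a genuinely different route from the paper's. The paper inducts on $|G|$ and reduces to maximal subgroups $M$ (via Lemma \ref{lem:checkmax}): when no nontrivial normal subgroup lies in $M$, it identifies $G = N \rtimes M \liso \AGL(n,p)$ with $|G:M| = p^n$, splits $x$ into its $p$-part and $p'$-part, kills the $p$-part using the exponent bound $\exp(P) \le p^n$ for a Sylow $p$-subgroup of $\AGL(n,p)$ (Cayley--Hamilton), and conjugates the $p'$-part into $M$ via Hall's theorem on the conjugacy of Hall $p'$-subgroups. You instead work with an arbitrary $H < G$, use a minimal normal elementary abelian $N$ to reduce (through the quotient and through $L = HN$, both sound) to $G = N \rtimes H$ with $|G:H| = |N| = p^k$, then cut $H$ down to the cyclic group $\langle h\rangle$ generated by the $H$-part of $x$ by inducting inside $N\langle h\rangle$ (Dedekind's law gives $N\langle h\rangle \cap H = \langle h\rangle$ of the right index), and finish with the explicit computation $(uh)^{p^k} = (S\cdot u)h^{p^k}$, $S = 1 + h + \cdots + h^{p^k-1}$, the factorization $1 - h^{p^k} = (1-h)S$, and the dichotomy $SN = 0$ or $SN = N$ forced by irreducibility of $N$ over $\F_p[\langle h\rangle]$ (which, as you rightly stress, is only available after $H$ has been forced to equal $\langle h\rangle$). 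Your argument avoids both Hall's theorem and the $\AGL(n,p)$ exponent lemma, is more self-contained, and in fact produces a conjugating element inside $N$ itself; the paper's proof is shorter granted the standard solvable-group machinery and only needs to handle maximal subgroups. The two minor points you flag (the sign/side convention in the semidirect-product expansion, and the aside about ``nilpotent configurations,'' which is actually subsumed by the $SN = 0$ case when $h$ acts trivially) are cosmetic and do not affect correctness.
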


It turns out that the class of \wic{} groups is strictly larger than the class of solvable groups (and hence a group being \wic{} is a weaker condition than being solvable). In particular, the alternating groups $A_5$ and $A_6$ are \wic; see Lemmas \ref{lem:A5} and \ref{lem:A6}. This also means that a group being \indexc{} is a strictly stronger condition than being nonsolvable.  It is not difficult to show (see Lemma \ref{lem:wicquo}) that if any proper quotient of a finite group $G$ is \indexc, then $G$ is itself \indexc.  This motivates the next definition, which is analogous to that of \textit{$\sigma$-elementary} when studying covering numbers of groups (see \cite{Garonzi_Kappe_Swartz_2022}).

\begin{defn}
 A finite group $G$ is said to be \textit{\IEe} if $G$ is \indexc{} (that is, $G$ is not \wic) but all proper quotients of $G$ are \wic.
\end{defn}

Thus, determining which finite groups are \wic{} (or, equivalently, which finite groups are \indexc) comes down to determining which groups are \IEe. It would be quite interesting to classify the \IEe{} groups. Any almost simple group is potentially \IEe, and natural examples to consider are symmetric and alternating groups. Apart from some small examples, these groups are all \IEe.

\begin{thm}
 \label{thm:Sn}
 The symmetric group $S_n$ is \IEe{} if and only if $n = 5$ or $n \ge 7$.
\end{thm}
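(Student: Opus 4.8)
The plan is to prove Theorem~\ref{thm:Sn} by establishing two separate facts: first, that $S_n$ is \indexc{} precisely when $n = 5$ or $n \ge 7$; and second, that for these $n$, every proper quotient of $S_n$ is \wic{}. Since $S_n$ is simple modulo its center only in degenerate cases, its proper quotients are very limited: for $n \ge 5$ the only proper nontrivial quotient of $S_n$ is $\Z/2\Z$ (since $A_n$ is the unique minimal normal subgroup), which is abelian hence solvable hence \wic{} by Theorem~\ref{thm:solvwic}; the trivial quotient is also \wic{}. So the ``minimal'' half of the statement reduces entirely to showing $S_n$ itself is \indexc{} for $n = 5$ and $n \ge 7$, and that $S_n$ is \wic{} for $n \le 4$ (solvable, done by Theorem~\ref{thm:solvwic}) and for $n = 6$ (which will presumably be handled by a separate lemma, referenced as Lemma~\ref{lem:A6} suggests $A_6$ is \wic{}; one still needs $S_6$ itself, so a short dedicated argument or computation is expected there).

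The heart of the proof is therefore: \emph{for $n = 5$ and all $n \ge 7$, exhibit a subgroup $H \le S_n$ and an element $x \in S_n$ such that no conjugate of $H$ contains $x^{|S_n : H|}$.} The natural strategy is to pick $H$ so that $|S_n:H|$ is coprime to the order of some element $x$, forcing $\langle x^{|S_n:H|}\rangle = \langle x \rangle$, and then to arrange that $\langle x \rangle$ (equivalently, some prime-power cyclic subgroup) embeds in no conjugate of $H$. A clean way to do this is to take $x$ to be an $n$-cycle (or a cycle of prime length $p$ with $n/2 < p \le n$, which exists by Bertrand's postulate), so that $|x|$ is odd or at least has a large prime factor $p$, and take $H$ to be a subgroup whose order is not divisible by $p$ while $|S_n:H|$ \emph{is} coprime to $p$ --- for instance $H$ could be a Sylow $2$-subgroup when $p$ is odd, or more flexibly a subgroup built to avoid $p$-elements entirely (a $p'$-subgroup) of index coprime to $p$. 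Concretely: if $p$ is a prime with $n/2 < p \le n$ and $p \nmid |S_n:H|$, then $p \mid |H|$ is forced by Sylow, so $H$ must contain an element of order $p$, i.e. a $p$-cycle; one then chooses $H$ to be a $p'$-group --- but then $p \mid |S_n:H|$, a contradiction. This tension is exactly what one must resolve carefully, and it is where the cases $n = 6$ (and $n \le 4$) genuinely differ: there one \emph{cannot} find such a configuration, because the relevant prime $p$ divides $|S_n|$ too tamely.

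The main obstacle I anticipate is constructing the witnessing pair $(H, x)$ uniformly for all large $n$ while simultaneously controlling index and conjugacy. The cleanest route is probably: let $p$ be the largest prime with $p \le n$; by Bertrand's postulate $p > n/2$, so a $p$-cycle $x$ generates a subgroup of order $p$ that lies in \emph{some} but not all point-stabilizer-type subgroups. Then take $H$ to be the stabilizer of a partition or a suitable intransitive/imprimitive subgroup whose order is divisible by $p$ only when one specific subset is moved in a prescribed way, and compute $|S_n:H|$ modulo $p$; if $p \nmid |S_n:H|$ then $x^{|S_n:H|}$ still generates a group of order $p$, but one shows directly (via the cycle structure / support of elements of $H^g$) that no conjugate $H^g$ contains a $p$-cycle with that particular support pattern --- or, more simply, that the \emph{number} of $p$-Sylows of $S_n$ contained in conjugates of $H$ is strictly smaller than the total, so some $p$-element, hence some $\langle x \rangle$, is missed. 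Verifying this last counting/combinatorial step, and checking that it fails exactly for $n \in \{1,2,3,4,6\}$, is the delicate part; everything else (the quotient analysis, the solvable cases, invoking Theorem~\ref{thm:solvwic} and Lemma~\ref{lem:A6}) is routine.
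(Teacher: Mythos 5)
Your reduction of the ``minimal'' part is fine (for $n \ge 5$ the only proper nontrivial quotient is $C_2$, handled by Theorem~\ref{thm:solvwic}), as is delegating $n \le 4$ to solvability and flagging $n=6$ for a separate argument (the paper indeed proves $S_6$ is \wic{} in Lemma~\ref{lem:S6}). The genuine gap is in the central construction of the witnessing pair $(H,x)$. Weak exponentiality only asks that $x^{|G:H|}$ land in \emph{some} conjugate of $H$, and your proposal builds $x$ around a prime $p$ with $n/2 < p \le n$. But for such $p$ the elements of order $p$ in $S_n$ are exactly the $p$-cycles, which form a single conjugacy class. So if $p \nmid |S_n:H|$, then $p$ divides $|H|$, hence $H$ contains a $p$-cycle, hence \emph{every} $p$-cycle --- in particular $x^{|S_n:H|}$ --- lies in some conjugate of $H$; and if $p \mid |S_n:H|$, then $x^{|S_n:H|}=1 \in H$. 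Either way $H$ passes the test on $x$, so no such pair can witness failure of the \wexp{} property. Your fallback (``the number of Sylow $p$-subgroups contained in conjugates of $H$ is strictly smaller than the total'') cannot work for the same reason: once $H$ contains one subgroup of order $p$, the conjugates of $H$ sweep out all of them, since they are all conjugate. The briefly mentioned $n$-cycle variant also dies whenever $n$ is prime (any $H$ of index coprime to $n$ then contains an $n$-cycle), so the approach has no chance for infinitely many $n$. This single-conjugacy-class obstruction is precisely why the small groups $A_5$, $A_6$, $S_6$ end up being \wic{}.

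The paper's witness is of a different nature: it uses an element whose relevant power has a \emph{mixed}, fixed-point-free cycle type, tested against the point stabilizer. For $n \ge 7$ write $n = k+\ell$ with $\gcd(k,n)=\gcd(\ell,n)=1$ and $\ell \ge k \ge 3$ (explicitly $k=\tfrac{n-1}{2}$, $\tfrac{n}{2}-1$, or $\tfrac{n}{2}-2$ according to $n$ odd, $n \equiv 0$, or $n \equiv 2 \pmod 4$), let $x$ be the product of a $k$-cycle and an $\ell$-cycle, and let $H = S_{n-1}$, of index $n$. Since $|x| = \mathrm{lcm}(k,\ell)$ is coprime to $n$, the power $x^{n}$ is again a product of a $k$-cycle and an $\ell$-cycle and so fixes no point, while every conjugate of $H$ is a point stabilizer; hence $x^{|G:H|}$ lies in no conjugate of $H$ and $H$ is not \wexp{} with respect to $S_n$. (For $n=5$ the analogous witness is $x=(1\;2\;3)(4\;5)$ of order $6$ against $M = S_4$ of index $5$: $x^5 = x$ has order $6$, and $S_4$ has no element of order $6$.) Without this coprime-cycle-type idea --- replacing prime-order elements by elements whose power retains a fixed-point-free mixed cycle structure --- your proof does not go through.
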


\begin{thm}
 \label{thm:An}
 The alternating group $A_n$ is \IEe{} if and only if $n \ge 7$.
\end{thm}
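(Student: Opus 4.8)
The plan is to prove the two directions separately, using Theorem \ref{thm:Sn} as a guide and exploiting the tight relationship between $S_n$ and $A_n$. For the ``only if'' direction, I would first dispose of the small cases $n \le 6$: the groups $A_1, A_2, A_3, A_4$ are solvable, hence \wic{} by Theorem \ref{thm:solvwic}, so they are certainly not \indexc{} and thus not \IEe{}; and $A_5$, $A_6$ are \wic{} by Lemmas \ref{lem:A5} and \ref{lem:A6}, so again they are not \IEe{}. This settles everything below $7$, so it remains to show $A_n$ is \IEe{} for all $n \ge 7$.

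For $n \ge 7$, there are two things to verify: that $A_n$ is \indexc{} (i.e.\ not \wic), and that every proper quotient of $A_n$ is \wic. The second point is immediate: since $A_n$ is simple for $n \ge 5$, its only proper quotient is the trivial group, which is (vacuously) \wic. So the crux is to exhibit, for each $n \ge 7$, a subgroup $H \le A_n$ and an element $x \in A_n$ witnessing that $H$ is not \wexp{} --- that is, $x^{|A_n:H|}$ lies in no conjugate of $H$. Here I would borrow the construction used for $S_n$ in the proof of Theorem \ref{thm:Sn}: presumably one takes $H$ to be (the even part of) a suitable intransitive or imprimitive subgroup, or a point stabilizer type subgroup, and $x$ an element of large prime order $p$ chosen so that $p \nmid |A_n : H|$ while no conjugate of $H$ contains an element of order $p$ with the relevant cycle structure. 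Concretely, choosing $x$ a $p$-cycle (with $p$ prime, $n/2 < p \le n$, which exists by Bertrand's postulate for $n \ge 7$) and $H$ a subgroup whose order is divisible by $p$ to a controlled power but which cannot contain $\langle x^{|A_n:H|}\rangle = \langle x \rangle$ under conjugation --- for instance because $H$ fixes a point or preserves a partition incompatible with a $p$-cycle --- should do it. One must check $p \nmid |A_n:H|$ so that $x^{|A_n:H|}$ generates the same cyclic group as $x$, exactly as in Example \ref{ex:easy}.

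The main obstacle I anticipate is arranging a single uniform family of witnesses $(H,x)$ that works for \emph{all} $n \ge 7$ simultaneously, handling the parity constraints that distinguish $A_n$ from $S_n$ (e.g.\ a transposition is available in $S_n$ but not in $A_n$, so the $S_n$-witness may need modification), and verifying the non-containment $x^{|A_n:H|} \notin H^g$ for every $g$ --- this last is really a statement about which cycle types occur in conjugates of $H$, and the cleanest approach is to pick $H$ so that $|H|$ is not divisible by $p$ at all when possible, or to count fixed points/block structure. A likely clean choice: let $p$ be a prime with $n/2 < p \le n$, let $x$ be a $p$-cycle, and let $H = (S_{p-1} \times S_{n-p+1}) \cap A_n$ or $H = A_{n-1}$ (a point stabilizer) when the index parity cooperates; then $|A_n : H|$ is coprime to $p$ in the point-stabilizer case precisely when $p \nmid n$, so one may need to split into cases according to whether $p \mid n$, choosing a different prime or a different $H$ in the exceptional case. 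Once the witness is in place for every $n \ge 7$, combined with the small-case analysis and the simplicity of $A_n$, the theorem follows. I would also double-check the boundary case $n = 7$ explicitly (possibly by computer, as the excerpt's style suggests \GAP{} computations are acceptable) to make sure the chosen witness genuinely fails to be \wexp{} there.
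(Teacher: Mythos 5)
Your framework is fine: the small cases ($A_n$ solvable for $n\le 4$, and $A_5$, $A_6$ \wic{} by Lemmas \ref{lem:A5} and \ref{lem:A6}), and the observation that simplicity makes every proper quotient trivially \wic{}, so that \indexc{} implies \IEe{} for $n\ge 7$, all match the paper. The gap is in the only step that carries real content: producing, for each $n\ge 7$, a pair $(H,x)$ with $x^{|A_n:H|}$ in no conjugate of $H$. Your concrete witnesses do not do this. Take $x$ a $p$-cycle with $n/2 < p \le n$ and $H=A_{n-1}$: if $p=n$ then $x^{|G:H|}=x^n=1\in H$; if $p<n$ then (when $p\nmid n$) $x^{|G:H|}$ generates $\langle x\rangle$, but a $p$-cycle with $p<n$ fixes at least one point and therefore \emph{does} lie in a point stabilizer, i.e.\ in a conjugate of $A_{n-1}$, so no contradiction is obtained. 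Your fallback $H=(S_{p-1}\times S_{n-p+1})\cap A_n$ fares no better: for $n/2<p\le n$ one typically has $p\mid\binom{n}{p-1}=|A_n:H|$ (e.g.\ $n=7$, $p=5$, $\binom{7}{4}=35$), which kills the element; and in the exceptional case $n=2p-1$ where the index is coprime to $p$, the $p$-cycle stabilizes its own fixed-point set of size $p-1$ and hence again lies in a conjugate of $H$. So the non-containment claim ``$H$ fixes a point or preserves a partition incompatible with a $p$-cycle'' is false for exactly the subgroups you propose. You also misremember the $S_n$ argument: it does not use elements of prime order, but fixed-point-free elements of composite cycle type.

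The missing idea is that the witness must be \emph{fixed-point-free}, with a cycle type preserved under raising to the power $|A_n:A_{n-1}|=n$; then it avoids every conjugate of the point stabilizer $H=A_{n-1}$. The paper does this in two parity cases. For $n$ even, the $S_n$ witness $x$, a product of a $k$-cycle and an $\ell$-cycle with $k+\ell=n$, $\gcd(k,n)=\gcd(\ell,n)=1$, $k,\ell\ge 3$, automatically has both cycle lengths odd, so $x\in A_n$, and $x^n$ has the same fixed-point-free cycle type. For $n$ odd (where the $S_n$ witness is an odd permutation, so your anticipated parity obstruction is real), the paper takes $x=(1\,2\,\cdots\,n-4)(n-3\ n-2)(n-1\ n)$ with $H=A_{n-1}$; since $\gcd(n-4,n)=1$ and $n$ is odd, $x^n$ is again an $(n-4)$-cycle times two $2$-cycles, fixes no point, and so lies in no conjugate of $H$. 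Without some such fixed-point-free construction your argument does not establish that $A_n$ is \indexc{}, so as written the proposal has a genuine gap.
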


These theorems might lead one to conjecture that almost all (almost) simple groups are \IEe.  However, the reality is far more complicated, as evidenced by the following result on projective special linear groups.

\begin{thm}
\label{thm:PSLwic}
Let $q = p^d$ be a prime power.  Then, $\PSL(2,q)$ is a \wic{} group if and only if $q = 4, 9$ or 
\[ p \equiv 2, 3, 5, 7, 17, 43, 53, 67, 77, 103, 113 \pmod {120}\]
and $d$ is odd.  Consequently, $\PSL(2,q)$ is \wic{} for infinitely many values of $q$ and is \indexc{} (and hence \IEe{}) for infinitely many values of $q$
\end{thm}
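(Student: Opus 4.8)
The plan is to combine Dickson's classification of the subgroups of $\PSL(2,q)$ with a uniform analysis, for each isomorphism type of subgroup, of when it is weakly exponential. The starting observation is that $H\lwexp G$ exactly when, for every $x\in G$, the cyclic subgroup $\langle x^{|G:H|}\rangle$ is $G$-conjugate into $H$. For $G=\PSL(2,q)$ the element orders are the divisors of $p$, of $a:=(q-1)/\gcd(2,q-1)$, and of $b:=(q+1)/\gcd(2,q-1)$; moreover a $p'$-element of $\PSL(2,q)$ of a given order lies in a torus that is unique up to conjugacy, so for the subgroups that matter below the conjugacy requirement reduces to the purely numerical one that $m/\gcd(m,|G:H|)$ is an element order of $H$ for every element order $m$ of $G$. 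Writing $v_\ell$ for the $\ell$-adic valuation, a direct computation gives, for $x$ of order $a$, that $x^{|G:H|}$ has order $\prod_{\ell}\ell^{\max(0,\,v_\ell(|H|)-v_\ell(q)-v_\ell(q+1))}$ (and symmetrically with $q-1$ in place of $q+1$ when $x$ has order $b$); the extreme cases are $m=a$ and $m=b$, and unipotent $x$ is harmless since $p\mid|G:H|$ for every subgroup we will have to consider.

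The first step is a reduction. I would show that cyclic subgroups, dihedral subgroups, Borel subgroups, and every subgroup thereof are automatically weakly exponential: a torus element of $G$ may be conjugated into the torus contained in such an $H$, and the displayed formula shows its $|G:H|$-power lands in $H$. Next, writing $q_0=p^{d_0}$ with $q=q_0^{\,r}$, I would show that the subfield subgroup $\PSL(2,q_0)$ is weakly exponential in $\PSL(2,q)$ whenever $r=d/d_0$ is odd — then $q_0-1\mid q-1$ and $q_0+1\mid q+1$, which forces $x^{|G:H|}$ into a subfield torus — while if $r$ is even the subgroup $\PGL(2,p^{d/2})\le\PSL(2,q)$ (which exists precisely in that case) fails to be weakly exponential as soon as $p^{d/2}\ge5$: an element of order $a$ powers to an element of order $(q_0^2-1)/2$, which exceeds every element order of $\PGL(2,q_0)$. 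Since every even $d\ge2$ with $q\notin\{4,9\}$ produces such a bad subfield subgroup, this forces $d$ to be odd, the only even exceptions being $q=4$ and $q=9$; these are \wic{} since $\PSL(2,4)=A_5$ and $\PSL(2,9)=A_6$ are \wic{} (Lemmas \ref{lem:A5} and \ref{lem:A6}).

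Assume now $d$ is odd. By Dickson's theorem the only remaining subgroups to test are $H\cong A_4,S_4,A_5$ (their other subgroups being cyclic, dihedral, or Klein four, already disposed of). Using the valuation formula I would derive the exact conditions: $A_4\le G$ is weakly exponential iff $q\not\equiv\pm1\pmod{12}$; $S_4\le G$ (which exists, for $q$ odd, iff $q\equiv\pm1\pmod8$) is weakly exponential iff either $q\equiv1\pmod8$ and $q\equiv-1\pmod3$, or $q\equiv-1\pmod8$ and $q\equiv1\pmod3$; and — the decisive point — whenever $A_5\le G$ and $p\ge7$, $A_5$ is \emph{never} weakly exponential, because the constraints forced by the order-$a$ element and the order-$b$ element are incompatible (one of the two powers down to an element of order $6$). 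Hence for $p\ge7$ one needs $A_5\not\le\PSL(2,q)$, that is, $p\equiv\pm2\pmod5$ (using $d$ odd), while for $p\in\{2,3,5\}$ a direct check of the very few relevant subgroups shows $\PSL(2,p^d)$ is always \wic{} when $d$ is odd. For $d$ odd the residues of $q=p^d$ modulo $3$, modulo $8$, and modulo $5$ agree with those of $p$ in the sense needed here, so translating the conditions on $q$ into conditions on $p$ and applying the Chinese Remainder Theorem modulo $\operatorname{lcm}(3,8,5)=120$ yields precisely the eleven residue classes in the statement (the eight reduced residues $7,17,43,53,67,77,103,113$, together with $2,3,5$).

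For the ``consequently'': $\PSL(2,q)$ with $q\ge4$ is simple, so its only proper quotient is trivial and hence \wic, which means every \indexc{} group of the form $\PSL(2,q)$ is automatically \IEe; and by Dirichlet's theorem each of the eleven residue classes above — and likewise an excluded class such as $p\equiv11\pmod{120}$ — contains infinitely many primes, so $\PSL(2,p)$ is \wic{} for infinitely many primes $p$ and \indexc{} (hence \IEe) for infinitely many primes $p$. The part of the argument I expect to require the most care is the $A_5$ analysis together with the complete bookkeeping of which of $A_4,S_4,A_5$ actually occur as subgroups for a given $p$ (a case split on $p$ modulo small integers) and the intersection of all the resulting congruences; on the other side, one must also verify cleanly that the Borel, cyclic, dihedral, and subfield-$\PSL$ subgroups genuinely never obstruct.
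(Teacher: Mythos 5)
Your proposal is correct in substance, and its computational core coincides with the paper's: the congruence analysis of $A_4$, $S_4$, $A_5$ via powers of elements of order $(q\pm1)/e$ (Lemmas \ref{lem:whenA4good}, \ref{lem:whenS4good}, \ref{lem:A5inPSLbad}), the $\PGL(2,q_0)$ obstruction when $d$ is even (Proposition \ref{prop:q02}), and the harmlessness of odd-degree subfield subgroups $\PSL(2,q_0)$ (Lemma \ref{lem:powerupPSLq}). The organization, however, is genuinely different. You run Dickson's classification of \emph{all} subgroups of $\PSL(2,q)$ directly for every odd $d$, reducing the conjugacy requirement to a purely numerical one via conjugacy of cyclic subgroups of a given order (Lemma \ref{lem:PSLcyclic}(1)); the paper instead settles $q=p$ first using only maximal subgroups (Lemma \ref{lem:checkmax}, since every maximal subgroup of $\PSL(2,p)$ is \wic), and then passes to $q=p^d$ with $d$ odd by induction on $d$ in one direction and, in the other, by the lifting argument of Remark \ref{rem:nicex} inside Proposition \ref{prop:qdependsonp}, which transports a bad pair from a subfield copy of $\PSL(2,p)$ up to $\PSL(2,q)$. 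Your route buys a uniform argument that avoids both the induction and that lifting step, and it makes visible why non-maximal copies of $A_4$ impose no extra conditions; the cost is that you need the full (not merely maximal) subgroup list, explicit verification for the cyclic, dihedral, and Borel families (which the paper short-circuits via Lemma \ref{lem:goodmax}, those maximal subgroups being solvable and hence \wic{} and \wexp), and a separate treatment of $p\in\{2,3,5\}$ for all odd $d$, where for instance $A_4$ in characteristic $3$ is a $p$-local subgroup and your valuation formula, derived assuming $p\nmid|H|$, must be redone.

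Two small slips to repair. First, your threshold $p^{d/2}\ge 5$ for the even-$d$ obstruction leaves $q=16$ unhandled, yet the theorem asserts $\PSL(2,16)$ is not \wic; the same computation works for $q_0=4$ (an element of order $15$ in $\PSL(2,16)$ powers, since $\gcd(15,|G:H|)=\gcd(15,68)=1$, to an element of order $15$, exceeding the maximal element order $5$ of $\PSL(2,4)\cong A_5$), so the correct threshold is $q_0\ge 4$, as in Proposition \ref{prop:q02}. Second, in the $A_5$ analysis the offending power need not have order $6$: it can have order $10$, $15$, or $30$ (compare Tables \ref{tbl:A5_1} and \ref{tbl:A5_9}); your incompatibility argument is still right, since the supports of the two powered elements partition $\{2,3,5\}$, so one of them has order divisible by two of these primes and hence lies in no conjugate of $A_5$.
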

%

Other than the three exceptional cases $p = 2, 3, 5$, by Theorem \ref{thm:PSLwic}, $\PSL(2,p)$ is \wic{} if and only if $p$ lies in one of eight congruence classes modulo $120$.  If $\varphi$ denotes the Euler totient function, then the prime numbers are asymptotically equally distributed among the $\varphi(120) = 32$ congruence classes modulo $120$ (this is the ``stronger version'' of Dirichlet's Theorem on Arithmetic Progressions).  This gives us the last of our major results.

\begin{cor}
 \label{cor:proportionPSLwic}
 Let $\pi(n)$ denote the number of primes less than $n$ and $w(n)$ denote the number of primes $p$ less than $n$ such that $\PSL(2,p)$ is \wic.  Then,
 \[ \lim_{n \to \infty} \frac{w(n)}{\pi(n)} = \frac{1}{4}.\]
\end{cor}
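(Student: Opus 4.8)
The plan is to derive Corollary~\ref{cor:proportionPSLwic} directly from Theorem~\ref{thm:PSLwic} together with the equidistribution of primes in arithmetic progressions. First I would observe that the set of primes $p$ for which $\PSL(2,p)$ is \wic{} is, up to the three exceptional small primes $p = 2,3,5$ (which contribute nothing to a limit), exactly the set of primes lying in the eight residue classes $2, 7, 17, 43, 53, 67, 103, 113 \pmod{120}$ among the $\varphi(120) = 32$ residue classes coprime to $120$. Here I need to take the congruence list from Theorem~\ref{thm:PSLwic} --- $p \equiv 2,3,5,7,17,43,53,67,77,103,113 \pmod{120}$ --- and discard those residues that are not coprime to $120$ (namely $2$, $3$, $5$, and $77 = 7 \cdot 11$ is coprime to $120$, so actually one must check: $2,3,5$ are the non-units, leaving $7,17,43,53,67,77,103,113$, which is eight classes). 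The point is that only a prime $p$ itself congruent to $2$, $3$, or $5$ can have a non-unit residue mod $120$, and those are the three exceptional cases $q = p$ small; for all other primes the residue is a unit, so we count eight of the $32$ unit classes.

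Next I would invoke the strong form of Dirichlet's theorem: for each residue class $a \pmod{120}$ with $\gcd(a,120) = 1$, the number of primes $p < n$ with $p \equiv a \pmod{120}$ is asymptotic to $\pi(n)/\varphi(120) = \pi(n)/32$ as $n \to \infty$. Summing over the eight relevant classes $a \in \{7,17,43,53,67,77,103,113\}$ and adding the bounded contribution $O(1)$ from $p \in \{2,3,5\}$ (and from $p = 7$ if one wishes to handle $q=9 = 3^2$ separately, but $q = 9$ corresponds to $p = 3$ which is already exceptional, and $q = 4$ to $p = 2$), we get
\[
w(n) = 8 \cdot \frac{\pi(n)}{32} + o(\pi(n)) = \frac{\pi(n)}{4} + o(\pi(n)),
\]
and dividing by $\pi(n)$ and letting $n \to \infty$ yields the claimed limit $1/4$.

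There is essentially no substantial obstacle here --- the corollary is a bookkeeping consequence of Theorem~\ref{thm:PSLwic} --- but the one point requiring care is the exact count of congruence classes: one must verify that Theorem~\ref{thm:PSLwic}'s list contains precisely eight residues coprime to $120$, and that the hypothesis ``$d$ odd'' does not further restrict things when $d = 1$ (which is automatic for the prime case $q = p$). I would also note explicitly that the three sporadic cases and any finite exceptional set contribute $o(\pi(n))$ and hence do not affect the limit, and that the denominator is $\varphi(120) = 32$ rather than $120$ because composite residues contain no primes beyond the finitely many small exceptions. With these observations the computation is immediate.
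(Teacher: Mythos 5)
Your proposal is correct and follows essentially the same route as the paper: the paper likewise notes that, apart from the finitely many exceptional primes $p = 2,3,5$, Theorem~\ref{thm:PSLwic} places the relevant primes in exactly eight of the $\varphi(120)=32$ unit residue classes modulo $120$, and then applies the strong form of Dirichlet's theorem to conclude the limit is $8/32 = 1/4$. The only blemish is the first list of residues you wrote (which included $2$ and omitted $77$), but you correct it to $7,17,43,53,67,77,103,113$ within the same paragraph, so the argument as finally stated is sound.
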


\begin{rem}
 Computation in GAP \cite{GAP4} shows that the only group with order at most $1000$ that is \indexc{} and does not have $S_5$ as a proper quotient is $\PSL(2,11)$.  In particular, the only \IEe{} groups with order at most $1000$ are $S_5$ and $\PSL(2,11)$.
\end{rem}

This paper is organized as follows.  Section \ref{sect:IIPsimple} is dedicated to the study of \expsimp{} groups and the proofs of Theorem \ref{thm:charIIPsimple} and Corollary \ref{cor:IIPsimple}.  In Section \ref{sect:prelims}, we present some basic results that are useful when studying \wexp{} subgroups.  Section \ref{sect:solv} is dedicated to the proof of Theorem \ref{thm:solvwic}, which shows that all solvable groups are \wic.  In Section \ref{sect:SnAn}, we prove Theorems \ref{thm:Sn} and \ref{thm:An}, which determines precisely which symmetric and alternating groups are \IEe.  Finally, in Section \ref{sect:PSL}, we prove Theorem \ref{thm:PSLwic}, which classifies the prime powers $q$ such that $\PSL(2,q)$ is \wic. 


\section{Exponential-simple groups}
\label{sect:IIPsimple}

Recall that a group $G$ is \expsimp{} if the only exponential subgroups of $G$ are $G$ itself, and those proper subgroups $H \le G$ such that $\exp(G)$ divides $|G:H|$. As pointed out in \cite[Remark 4.2]{Sabatini_2024}, $G$ will contain exponential subgroups $H \ne \{1\}$ as long as $G$ is not a cyclic group of prime order. So, one should expect that there exist nontrivial examples of \expsimp{} groups, and this is indeed the case. In this section, we will prove Theorem \ref{thm:charIIPsimple}---which completely describes \expsimp{} groups---as well as its corollaries, which give several interesting examples of such groups.

%

One part of our classification of \expsimp{} groups will follow from the next lemma; see also \cite[Lemma 4.3]{Sabatini_2024}.

\begin{lem}
 \label{lem:core}
 Let $G$ be a finite group, $H \le G$, and 
 \[ K \colonequals \bigcap_{g \in G} H^g.\]
 If $H$ is exponential with respect to $G$, then, for all $x \in G$, $x^{|G:H|} \in K$.
 In particular, $\exp(G/K)$ divides $|G:H|$.
\end{lem}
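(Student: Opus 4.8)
The plan is to exploit the fact that the exponential property is preserved under conjugation, so that $H^g \lexp G$ for every $g \in G$, and then to intersect the resulting containments over all conjugates of $H$.

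First I would record that for any $g \in G$ the subgroup $H^g$ has the same index as $H$, namely $|G:H^g| = |G:H|$, since conjugation by $g$ is an automorphism of $G$ carrying $H$ onto $H^g$. Next I would check that each $H^g$ is itself exponential with respect to $G$: given $x \in G$, apply the hypothesis $H \lexp G$ to the element $gxg^{-1} \in G$ to obtain $(gxg^{-1})^{|G:H|} \in H$; since $(gxg^{-1})^{|G:H|} = g\,x^{|G:H|}\,g^{-1}$, rearranging gives $x^{|G:H|} \in g^{-1}Hg = H^{g}$. As $g$ was arbitrary, $x^{|G:H|}$ lies in every conjugate of $H$, hence in $K = \bigcap_{g \in G} H^g$, which is the first assertion.

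The ``in particular'' clause is then immediate: $K$ is normal in $G$ (it is the normal core of $H$), and since $x^{|G:H|} \in K$ for every $x \in G$, we have $(xK)^{|G:H|} = K$ in the quotient $G/K$. Thus every element of $G/K$ has order dividing $|G:H|$, which is precisely the statement that $\exp(G/K)$ divides $|G:H|$.

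There is essentially no serious obstacle here; the argument is a one-line manipulation together with the observation that index is invariant under conjugation. The only point requiring care is bookkeeping with the conjugation convention (whether $H^g$ denotes $g^{-1}Hg$ or $gHg^{-1}$), so that the identity $(gxg^{-1})^{n} = g\,x^{n}\,g^{-1}$ is matched correctly against the definition of $K$ used elsewhere in the paper; once that convention is fixed, the proof is routine.
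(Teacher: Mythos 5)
Your proof is correct and follows essentially the same route as the paper: apply the exponential property to the conjugate element $gxg^{-1}$ and undo the conjugation to conclude $x^{|G:H|} \in H^g$ for every $g$, hence $x^{|G:H|} \in K$, with the exponent claim following from normality of $K$. The preliminary observation that $|G:H^g| = |G:H|$ is harmless but not needed, and your care about the conjugation convention matches the paper's computation exactly.
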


\begin{proof}
 Suppose $H$ is exponential and let $x \in G$.  For all $g \in G$, we have
 \[ ( x^{|G:H|})^{g^{-1}} = (x^{g^{-1}})^{|G:H|} \in H.\]
Thus, $x^{|G:H|} \in H^g$ for all $g \in G$, and so $x^{|G:H|} \in K$. Finally, $K \lhd G$, so $\overline{x}^{|G:H|} = \overline{1}$ for all $\overline{x} \in G/K$, proving the result.
\end{proof}

We can now prove Theorem \ref{thm:charIIPsimple}, which characterizes \expsimp{} groups.

\begin{proof}[Proof of Theorem \ref{thm:charIIPsimple}]
 First, let $G$ be a finite \expsimp{} group and let $N$ be a proper normal subgroup of $G$.  Certainly, $\exp(G/N)$ divides $\exp(G)$.  Since $N$ is a normal subgroup, $N \lexp G$; but, $G$ is \expsimp{}, so $N$ is \exptriv{}, and hence $\exp(G)$ divides $|G:N|$.  
 
 Let $\overline{G} = G/N$, and suppose $\exp(G) > \exp(G/N)$.  Then, there exists a prime $p$ and positive integer $r$ such that $p^r$ divides $\exp(G)$ but $p^r$ does not divide $\exp(\overline{G})$.  Now, $p^r$ divides $\exp(G)$, so $p^r$ divides $|G:N| = |\overline{G}|$.  Suppose a Sylow $p$-subgroup of $\overline{G}$ has order $p^d$ and let $\overline{H}$ be a subgroup of $\overline{G}$ of order $p^{d - (r - 1)}$.  Let $\phi: G \to \overline{G}$ be the natural homomorphism and $H \colonequals \phi^{-1}(\overline{H}) \le G$.  Then,
 \[ |G:H| = |\overline{G}:\overline{H}| = p^{r-1} \cdot |\overline{G}|_{p'},\]
 where $|\overline{G}|_{p'} \colonequals |\overline{G}|/p^d$ denotes the $p'$-part of $|\overline{G}|$.  Since the highest power of $p$ dividing $\exp(\overline{G})$ is less than $p^r$, $\exp(\overline{G})$ divides $|G:H|$, but $p^r$ does not divide $|G:H|$, so $\exp(G)$ does not divide $|G:H|$.  
 
 Let $x \in G$.  In $\overline{G}$, $\overline{x}^{|G:H|} = \overline{1}$, so $x^{|G:H|} \in N \le H$.  Thus, $H \lexp G$, but $\exp(G) \nmid |G:H|$, so $H$ is not \exptriv{}. This contradicts $G$ being \expsimp{}.  Thus, if $G$ is \expsimp{}, then $\exp(G/N) = \exp(G)$ for all proper normal subgroups $N$.
 
Conversely, assume that $\exp(G) = \exp(G/N)$ for all proper normal subgroups $N$ of $G$.  Let $H < G$, and assume $H \lexp G$. Let
\begin{equation*}
K \colonequals \bigcap_{g \in G}\limits H^g.
\end{equation*}
By Lemma \ref{lem:core}, $\exp(G/K)$ divides $|G:H|$.  By hypothesis, $\exp(G/K) = \exp(G)$, meaning $H$ is \exptriv{}.  Therefore, $G$ is \expsimp{}, as desired.
\end{proof}


\begin{cor}
 \label{cor:simpleIIPsimple}
 If $G$ is a finite simple group, then $G$ is \expsimp{}.
\end{cor}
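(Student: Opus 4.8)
The plan is to read this off directly from Theorem \ref{thm:charIIPsimple}. First I would recall that if $G$ is a finite simple group, then its only proper normal subgroup is the trivial subgroup $\{1\}$. Hence the only instance of the hypothesis in Theorem \ref{thm:charIIPsimple} that needs checking is $N = \{1\}$, and in that case $G/N \cong G$, so $\exp(G/N) = \exp(G)$ holds trivially. Theorem \ref{thm:charIIPsimple} then immediately yields that $G$ is \expsimp{}. This argument also transparently covers the degenerate case where $G = C_p$ is cyclic of prime order.

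Alternatively, one can bypass Theorem \ref{thm:charIIPsimple} and argue directly from Lemma \ref{lem:core}. Suppose $H \le G$ with $H \lexp G$, and set $K \colonequals \bigcap_{g \in G} H^g$, the normal core of $H$ in $G$. Since $K \lhd G$ and $G$ is simple, either $K = G$ or $K = \{1\}$. In the first case $H = G$, which is \exptriv{}. In the second case, Lemma \ref{lem:core} gives that $\exp(G/K) = \exp(G)$ divides $|G:H|$, so again $H$ is \exptriv{}. Either way every exponential subgroup of $G$ is \exptriv{}, i.e.\ $G$ is \expsimp{}.

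There is no genuine obstacle here: the corollary is an essentially one-line consequence of the characterization (or of Lemma \ref{lem:core}), the only point to note being the basic fact that a simple group has no proper normal subgroups other than the identity. I would present the short deduction from Theorem \ref{thm:charIIPsimple} as the main proof, perhaps remarking on the direct core argument as an aside.
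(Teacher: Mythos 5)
Your proof is correct and matches the paper's argument exactly: the corollary is read off from Theorem \ref{thm:charIIPsimple} using the fact that a finite simple group has only the trivial proper normal subgroup, so $\exp(G/N)=\exp(G)$ holds vacuously. Your aside via Lemma \ref{lem:core} is also valid (and is essentially the route through \cite[Lemma 4.3]{Sabatini_2024} that the paper mentions parenthetically), but the main deduction is the same as the paper's.
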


\begin{proof}
This is clear, since in a finite simple group the only proper normal subgroup is the identity subgroup. (This result also follows from \cite[Lemma 4.3]{Sabatini_2024}.)
\end{proof}

Although all simple groups are \expsimp{}, the class of \expsimp{} groups is strictly larger than the class of simple groups, as the following results show.

\begin{cor}
 \label{cor:solvIIPsimple}
 Let $G$ be a solvable group.  Then, $G$ is \expsimp{} if and only if $G$ is a $p$-group with exponent $p$ for some prime $p$.
\end{cor}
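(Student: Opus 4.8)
The plan is to use Theorem \ref{thm:charIIPsimple} as the main tool: $G$ is exp-simple if and only if $\exp(G) = \exp(G/N)$ for every proper normal subgroup $N$ of $G$. The "if" direction is quick. Suppose $G$ is a $p$-group of exponent $p$. Then for any proper normal subgroup $N$, the quotient $G/N$ is a nontrivial $p$-group, so $\exp(G/N) = p = \exp(G)$ (a nontrivial $p$-group has exponent at least $p$, and at most $\exp(G) = p$). Hence the criterion of Theorem \ref{thm:charIIPsimple} is satisfied and $G$ is exp-simple. (Note this direction does not even use solvability.)

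For the "only if" direction, assume $G$ is solvable and exp-simple. The key observation is that a nontrivial solvable group $G$ has a proper normal subgroup $N$ with $G/N$ cyclic of prime order: take $N$ to be any maximal normal subgroup, so that $G/N$ is simple and solvable, hence cyclic of order some prime $q$. By Theorem \ref{thm:charIIPsimple}, $\exp(G) = \exp(G/N) = q$. So $\exp(G)$ is prime, which immediately forces $G$ to be a $q$-group: every element has order $1$ or $q$, so by Cauchy/Lagrange considerations no prime other than $q$ can divide $|G|$ (if a prime $r \neq q$ divided $|G|$, Cauchy's theorem would produce an element of order $r$, contradicting $\exp(G) = q$). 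Thus $G$ is a $q$-group of exponent $q$, which is exactly the claimed conclusion (with $p = q$).

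Assembling these: the "if" direction is a one-line check that every proper quotient of a nontrivial exponent-$p$ group again has exponent $p$; the "only if" direction runs through a maximal normal subgroup to see $\exp(G)$ is prime, then invokes Cauchy's theorem to conclude $G$ is a $p$-group of exponent $p$. I do not anticipate a genuine obstacle here — the only point requiring a moment's care is justifying the existence of a proper normal subgroup with cyclic prime quotient, which is standard for nontrivial solvable (indeed finite) groups, and the trivial group case, where $G = 1$ is vacuously a $p$-group of exponent $p$ for every prime $p$ (or one may simply exclude it by convention).
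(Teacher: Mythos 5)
Your proposal is correct and follows essentially the same route as the paper: both directions reduce to the criterion of Theorem \ref{thm:charIIPsimple}, with the ``only if'' direction obtained by passing to a normal subgroup of prime index (the paper states its existence directly from solvability, you realize it via a maximal normal subgroup) and then concluding that $\exp(G)$ being prime forces $G$ to be a $p$-group of exponent $p$. Your explicit mention of Cauchy's theorem and the trivial-group convention only makes explicit steps the paper leaves implicit.
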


\begin{proof}
 Let $G$ be a solvable group.  Assume first that $G$ is a $p$-group with exponent $p$.  Then, the exponent of every proper quotient also has exponent $p$, and so $G$ is \expsimp{} by Theorem \ref{thm:charIIPsimple}.  
 
 Conversely, assume that $G$ is a solvable group that is \expsimp{}.  Since $G$ is solvable, there exists a normal subgroup $N$ of $G$ such that $|G:N| = p$ for some prime $p$.  By Theorem \ref{thm:charIIPsimple}, this implies $\exp(G) = \exp(G/N) = p$, and hence $G$ is a $p$-group with exponent $p$.
\end{proof}

\begin{cor}
 \label{cor:directprod_IIPsimple}
 Let $T_1, \dots, T_n $ be simple groups.  The direct product 
 \[ G \colonequals T_1 \times \cdots \times T_n \]
 is \expsimp{} if and only if all $T_i$ have the same exponent.
\end{cor}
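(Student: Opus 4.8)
The plan is to reduce everything to Theorem~\ref{thm:charIIPsimple}, which says that $G$ is \expsimp{} precisely when $\exp(G/N) = \exp(G)$ for every proper normal subgroup $N \norml G$. The starting observation is that $\exp(G) = \LCM(\exp(T_1), \dots, \exp(T_n))$, since an element $(t_1, \dots, t_n) \in G$ has order equal to the least common multiple of the orders of its coordinates.

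For the implication ``all $T_i$ have a common exponent $\Rightarrow$ $G$ is \expsimp{}'', write $e$ for the common exponent, so that $\exp(G) = e$. Given a proper $N \norml G$, choose an index $i$ with $T_i \not\le N$, where I identify $T_i$ with the $i$-th direct factor $1 \times \cdots \times T_i \times \cdots \times 1$ of $G$; such an $i$ exists because $N \ne G$. Since $T_i \cap N$ is a normal subgroup of the simple group $T_i$ and is not all of $T_i$, it is trivial, so $T_i$ maps injectively into $G/N$. Hence $e = \exp(T_i)$ divides $\exp(G/N)$, which in turn divides $\exp(G) = e$, forcing $\exp(G/N) = \exp(G)$; Theorem~\ref{thm:charIIPsimple} then gives that $G$ is \expsimp{}.

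For the converse I argue the contrapositive. If the $T_i$ do not all have the same exponent, then, comparing prime-power parts of the integers $\exp(T_1), \dots, \exp(T_n)$, there is a prime $p$ for which these $p$-parts are not all equal. Let $p^a$ be the largest of them, which is exactly the $p$-part of $\exp(G)$, and set $S \colonequals \{\, i : p^a \mid \exp(T_i) \,\}$. Then $S$ is nonempty (the maximum is attained) but is not all of $\{1, \dots, n\}$ (not every $p$-part equals $p^a$), so $N \colonequals \prod_{i \in S} T_i$ is a proper normal subgroup of $G$ with $G/N \cong \prod_{i \notin S} T_i$. Every factor occurring in this quotient has $p$-part of exponent strictly less than $p^a$, hence so does $\exp(G/N)$; in particular $\exp(G/N) \ne \exp(G)$, and Theorem~\ref{thm:charIIPsimple} shows that $G$ is not \expsimp{}.

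The argument is little more than bookkeeping with exponents once Theorem~\ref{thm:charIIPsimple} is granted, and I expect the only mildly delicate point to be the embedding step in the first direction: one must observe that a simple direct factor not contained in $N$ meets $N$ trivially and therefore injects into $G/N$, which is exactly what prevents the exponent from dropping below $\exp(G)$. It is worth noting that nowhere does one need a description of the normal subgroups of $G$---in particular, none of the subtleties of normal subgroups of a product of pairwise-isomorphic (abelian) simple factors arise---since the converse only requires exhibiting the single convenient normal subgroup $\prod_{i \in S} T_i$.
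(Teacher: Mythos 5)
Your proof is correct, and while it makes the same overall reduction to Theorem~\ref{thm:charIIPsimple} as the paper does, the way you handle each direction is genuinely different. For ``common exponent $\Rightarrow$ \expsimp{}'', the paper invokes the structural fact that a quotient of $T_1 \times \cdots \times T_n$ by any normal subgroup is again a direct product of some subcollection of the $T_i$'s; this is standard when the factors are nonabelian but requires a small extra remark when abelian factors (and hence diagonal-type normal subgroups) are present. Your embedding argument --- pick $T_i \not\le N$, note $T_i \cap N \norml T_i$ is proper hence trivial, so $T_i$ injects into $G/N$ and $\exp(T_i) \mid \exp(G/N) \mid \exp(G)$ --- sidesteps any description of the normal subgroups of $G$, which is a real (if modest) gain in self-containedness, as you point out yourself. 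In the other direction the paper is slicker than you are: rather than your contrapositive with prime-part bookkeeping and the normal subgroup $\prod_{i \in S} T_i$, it simply observes that $G$ projects onto each $T_i$ with kernel the product of the remaining factors, so Theorem~\ref{thm:charIIPsimple} immediately forces $\exp(T_i) = \exp(G)$ for every $i$. Both routes are valid; yours trades a one-line projection argument for a slightly longer construction, but buys a cleaner proof of the direction where the paper's argument is the one needing care.
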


\begin{proof}
 Assume first that $G$ is \expsimp{}.  Since $G$ naturally projects onto each $T_i$, by Theorem \ref{thm:charIIPsimple}, this implies $\exp(T_i) = \exp(G)$ for all $i$. Conversely, assume each $T_i$ has the same exponent $m$.  If $N \lhd G$, then $G/N$ is isomorphic to a direct product of some collection of $T_i$'s, and so $\exp(G/N) = m = \exp(G)$.  By Theorem \ref{thm:charIIPsimple}, $G$ is \expsimp{}.
\end{proof}

\begin{ex}
 The groups $\PSL(4,2) \cong A_8$ and $\PSL(3,4)$ are not isomorphic, but both groups have exponent $420$.  Thus, $\PSL(4,2) \times \PSL(3,4)$ is \expsimp{} by Theorem \ref{thm:charIIPsimple}.
 \end{ex}
 
 \begin{ex}
Let $\exp_p(G)$ denote the highest power of $p$ dividing $\exp(G)$. If $p$ is an odd prime and $n=p^k$, then $A_n$ contains an element of order $p^k$, while $A_{n-1}$ does not. So, if $n$ is a power of $p$, then $\exp_p(A_n) \ne \exp_p(A_{n-1})$. Otherwise, $\exp_p(A_n) = \exp_p(A_{n-1})$. When $p=2$, elements of order $2^k$ are present in $A_n$ if and only if $n \ge 2^k+2$. Hence, if $n \ne 2^k+2$, then $\exp_2(A_n) = \exp_2(A_{n-1})$. Thus, if $n \ge 6$ is not a power of an odd prime nor equal to $2^k + 2$ for some $k$, then $\exp(A_n) = \exp(A_{n-1})$, and so $A_{n-1} \times A_n$ is \expsimp{} for all such $n$ by Theorem \ref{thm:charIIPsimple}.
\end{ex}

Corollaries \ref{cor:simpleIIPsimple}, \ref{cor:solvIIPsimple}, and \ref{cor:directprod_IIPsimple} might lead one to believe that all composition factors of an \expsimp{} group need to have the same exponent.  This is not the case.  Recall that a group $G$ is \textit{quasisimple} if $G = [G,G]$ and $G/Z(G)$ is a simple group.

\begin{cor}
 \label{cor:quasiIIP}
Let $G$ be quasisimple group. Then, $G$ is \expsimp{} if and only if $\exp(G) = \exp(G/Z(G))$.
\end{cor}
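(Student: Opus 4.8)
The plan is to apply Theorem \ref{thm:charIIPsimple} directly, using the special structure of quasisimple groups---in particular, the fact that $Z(G)$ is the unique maximal normal subgroup whose quotient behaves well. First I would establish the forward direction: if $G$ is \expsimp{}, then since $Z(G)$ is a proper normal subgroup of $G$ (it is proper because $G/Z(G)$ is simple, hence nontrivial), Theorem \ref{thm:charIIPsimple} immediately gives $\exp(G) = \exp(G/Z(G))$. This direction is essentially free.

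For the converse, suppose $\exp(G) = \exp(G/Z(G))$; I must show $\exp(G) = \exp(G/N)$ for every proper normal subgroup $N$ of $G$. The key structural fact is that in a quasisimple group every proper normal subgroup is central, i.e. $N \le Z(G)$: indeed, the image of $N$ in the simple group $G/Z(G)$ is a normal subgroup, hence trivial or all of $G/Z(G)$; the latter would force $NZ(G) = G$, but then $G/N \cong Z(G)/(N \cap Z(G))$ would be abelian, contradicting $G = [G,G]$ unless $G$ is trivial. So $N \le Z(G)$. Given this, the natural surjection $G/N \twoheadrightarrow G/Z(G)$ shows $\exp(G/Z(G))$ divides $\exp(G/N)$, and since $\exp(G/N)$ divides $\exp(G)$, the hypothesis $\exp(G) = \exp(G/Z(G))$ sandwiches everything: $\exp(G) = \exp(G/N)$ for all proper normal $N$. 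Theorem \ref{thm:charIIPsimple} then yields that $G$ is \expsimp{}.

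The only step requiring any care is the claim that every proper normal subgroup of a quasisimple group is central, and that a quasisimple group is nontrivial (so that $Z(G) < G$). Both are standard, but I would spell out the commutator argument rather than cite it, since it is short and self-contained. I do not expect a genuine obstacle here; the corollary is a clean consequence of the characterization theorem once the normal-subgroup structure of quasisimple groups is invoked. If desired, one could also phrase the sandwich inequality prime-by-prime using $\exp_p$ as in the preceding examples, but the divisibility formulation above is cleaner and avoids any case analysis.
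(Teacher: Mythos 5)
Your proof is correct and follows essentially the same route as the paper: apply Theorem \ref{thm:charIIPsimple} in both directions, using that every proper normal subgroup $N$ of a quasisimple group satisfies $N \le Z(G)$ and then sandwiching $\exp(G/Z(G)) \mid \exp(G/N) \mid \exp(G)$. The only difference is that you spell out the standard commutator argument for $N \le Z(G)$, which the paper simply asserts; that is a harmless (indeed slightly more self-contained) addition.
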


\begin{proof}
If $G$ is \expsimp{}, then $\exp(G) = \exp(G/Z(G))$ by Theorem \ref{thm:charIIPsimple}. Conversely, assume $\exp(G) = \exp(G/Z(G))$.  Let $N$ be a proper normal subgroup of $G$.  Since $G$ is quasisimple, $N \le Z(G)$.  Moreover, since $1 \le N \le Z(G)$, 
 \[ \exp(G/Z(G)) \le \exp(G/N) \le \exp(G).\]
Thus, $\exp(G/N) = \exp(G)$ and $G$ is \expsimp{}.
\end{proof}

\begin{ex}
 Consider the quasisimple group $G = 3.A_6$ (this is SmallGroup(1080,260) in GAP \cite{GAP4}).  Note that $Z(G) \cong C_3$ and $G/Z(G) \cong A_6$, so $G$ has distinct composition factors $C_3$ and $A_6$.  Since $\exp(G) = \exp(A_6) = 60$, $G$ is \expsimp{} by Corollary \ref{cor:quasiIIP}.  Thus, an \expsimp{} group can have composition factors with distinct exponents.  
\end{ex}


\section{Preliminary results related to \wexp{} subgroups}
\label{sect:prelims}

Recall that a subgroup $H$ of $G$ is \textit{\wexp{}} (denoted $H \lwexp G$) if, for all $x \in G$, there exists $g \in G$ such taht $x^{|G:H|} \in H^g$. We say that $G$ is \textit{\wic{}} if $H \lwexp G$ for all subgroups $H$ of $G$.  Here, we collect some lemmas that are useful for determining whether or not a group is \wic.

\begin{lem}
 \label{lem:goodmax}
 Let $G$ be a finite group. Let $Y$ be a subgroup of $G$ such that $Y$ is \wic{} and $Y$ is \wexp{} with respect to $G$.  If $H \le Y$, then $H$ is \wexp{} with respect to $G$.
\end{lem}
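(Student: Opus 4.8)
The plan is to chain together the two hypotheses on $Y$, exploiting the multiplicativity of indices $|G:H| = |G:Y| \cdot |Y:H|$. Fix $x \in G$; we must produce $g \in G$ with $x^{|G:H|} \in H^g$.

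First I would invoke $Y \lwexp G$: there exists $g_1 \in G$ with $x^{|G:Y|} \in Y^{g_1}$. Conjugating by $g_1^{-1}$ and setting $w \colonequals x^{g_1^{-1}}$, and using the identity $(x^{|G:Y|})^{g_1^{-1}} = (x^{g_1^{-1}})^{|G:Y|}$, this becomes $w^{|G:Y|} \in Y$. Next, since $Y$ is \wic{} and $H \le Y$, the subgroup $H$ is \wexp{} with respect to $Y$; applying this to the element $w^{|G:Y|} \in Y$ yields $h \in Y$ with $(w^{|G:Y|})^{|Y:H|} \in H^h$. Because $(w^{|G:Y|})^{|Y:H|} = w^{|G:Y| \cdot |Y:H|} = w^{|G:H|}$, we get $w^{|G:H|} \in H^h$. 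Finally, unwinding $w = x^{g_1^{-1}}$ and conjugating back by $g_1$ gives $x^{|G:H|} \in H^{h g_1}$, so $g \colonequals h g_1 \in G$ is the desired witness.

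The only point requiring care is bookkeeping with conjugation: one should note that the conjugating element $h$ furnished by the \wic{} property of $Y$ lies in $Y \le G$, so that $h g_1$ is a bona fide element of $G$, and that the exponent-conjugation identity $(a^n)^b = (a^b)^n$ is applied with the correct exponents $n = |G:Y|$ and $n = |G:H|$. Beyond this there is no genuine obstacle; the statement is a formal consequence of transitivity-style composition of the \wexp{} condition through the intermediate subgroup $Y$.
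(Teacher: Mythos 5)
Your proof is correct and follows essentially the same argument as the paper: apply $Y \lwexp G$ to get $x^{|G:Y|}$ into a conjugate of $Y$, then use the \wic{} property of $Y$ on the subgroup $H$ together with $|G:H| = |G:Y|\cdot|Y:H|$. The only cosmetic difference is that you conjugate the element back into $Y$ itself, whereas the paper works inside the conjugate $Y^g$ (applying the \wexp{} property to $H^g \le Y^g$); the two framings are equivalent.
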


\begin{proof}
 Let $G$, $Y$, $H$ be as in the statement of the lemma.  Since $Y \lwexp G$, for each $x \in G$ there exists $g \in G$ such that $x^{|G:Y|} \in Y^g$.  Since $Y^g \cong Y$, $Y^g$ is \wic, i.e., there exists $y \in Y^g$ such that 
 \[ x^{|G:H|} = \left( x^{|G:Y|}\right)^{|Y^g:H^g|} \in H^{gy},\]
 which shows that $H \lwexp G$.
\end{proof}

\begin{lem}
\label{lem:checkmax}
 Let $G$ be a group such that all maximal subgroups of $G$ are themselves \wic{} groups.  Then, $G$ is \wic{} if and only if each maximal subgroup is \wexp{} with respect to $G$. 
\end{lem}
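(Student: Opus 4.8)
The plan is to prove the two implications separately, where the forward direction is essentially immediate and the reverse direction is where Lemma \ref{lem:goodmax} does the work. First, suppose $G$ is \wic{}. Then every subgroup of $G$ is \wexp{} with respect to $G$; in particular, every maximal subgroup of $G$ is \wexp{} with respect to $G$, which is exactly the right-hand condition. Note that this direction does not even use the hypothesis that the maximal subgroups are themselves \wic{}.

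For the converse, assume that every maximal subgroup $M$ of $G$ is a \wic{} group and is \wexp{} with respect to $G$. We must show that an arbitrary subgroup $H \le G$ is \wexp{} with respect to $G$. If $H = G$, this is trivial, so assume $H < G$. Then $H$ is contained in some maximal subgroup $M$ of $G$. By hypothesis, $M$ is \wic{} and $M \lwexp G$, so we may apply Lemma \ref{lem:goodmax} with $Y = M$ to conclude that $H$ is \wexp{} with respect to $G$. Since $H$ was an arbitrary subgroup, $G$ is \wic{}.

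I do not anticipate any serious obstacle here: the lemma is really a packaging of Lemma \ref{lem:goodmax}, reducing the verification that $G$ is \wic{} to checking the maximal subgroups only. The only point requiring a moment's care is the case $H = G$ (and, depending on conventions, whether $G$ itself should be regarded as a "maximal subgroup" — it should not, so one genuinely does need $H$ to sit inside a proper maximal subgroup), but this is handled by the observation that $G \lwexp G$ holds vacuously since $|G:G| = 1$ and $x^1 \in G = G^g$ for every $x, g \in G$.
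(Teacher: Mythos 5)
Your proof is correct and follows essentially the same route as the paper: the forward direction is immediate from the definition, and the converse places $H < G$ inside a maximal subgroup $M$ and applies Lemma \ref{lem:goodmax} with $Y = M$. Your explicit remark about the trivial case $H = G$ is a minor addition the paper leaves implicit.
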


\begin{proof}
 First, if $M$ is maximal and $M$ is not \wexp{} with respect to $G$, then $G$ is not \wic{} by definition.
 
 Conversely, assume all maximal subgroups of $G$ are \wexp{} with respect to $G$ and are themselves \wic{} groups.  Let $x \in G$ and $H < G$.  Since $H < G$, there exists a maximal subgroup $M$ of $G$ such that $H \le M < G$.  Since $M \lwexp G$ and $M$ is \wic, $H \lwexp G$ by Lemma \ref{lem:goodmax}.  Since $H$ was arbitrary, $G$ is \wic.
\end{proof}

\begin{lem}
 \label{lem:wicquo}
 If $G$ is a \wic{} group, then all quotients of $G$ are \wic.
\end{lem}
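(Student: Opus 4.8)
The plan is to prove the contrapositive in a quotient-friendly way. Suppose $G$ is \wic{} and let $N \lhd G$; I want to show $\overline{G} = G/N$ is \wic{}. Let $\overline{H}$ be any subgroup of $\overline{G}$ and let $\phi \colon G \to \overline{G}$ be the natural projection. Set $H \colonequals \phi^{-1}(\overline{H})$, so that $H$ is a subgroup of $G$ containing $N$, and the correspondence theorem gives $|G:H| = |\overline{G}:\overline{H}|$. Since $G$ is \wic{}, $H$ is \wexp{} with respect to $G$: for every $x \in G$ there exists $g \in G$ with $x^{|G:H|} \in H^g$.

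Next I would push this relation through $\phi$. Applying $\phi$ to $x^{|G:H|} \in H^g$ yields $\overline{x}^{|G:H|} \in \phi(H^g) = \phi(H)^{\overline{g}} = \overline{H}^{\overline{g}}$, where $\overline{g} = \phi(g)$ and I have used that $\phi(H) = \phi(\phi^{-1}(\overline{H})) = \overline{H}$ (this last equality because $\phi$ is surjective). Since $|G:H| = |\overline{G}:\overline{H}|$, this says precisely that for every $\overline{x} \in \overline{G}$ there exists $\overline{g} \in \overline{G}$ such that $\overline{x}^{\,|\overline{G}:\overline{H}|} \in \overline{H}^{\overline{g}}$; that is, $\overline{H} \lwexp \overline{G}$. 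As $\overline{H}$ was an arbitrary subgroup of $\overline{G}$, the quotient $\overline{G}$ is \wic{}. Any quotient of $G$ has the form $G/N$ for some $N \lhd G$, so this completes the argument.

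I do not anticipate a serious obstacle here; the proof is essentially bookkeeping with the lattice isomorphism between subgroups of $G$ containing $N$ and subgroups of $\overline{G}$, together with the compatibility of index, conjugation, and powers under a surjective homomorphism. The one point requiring a moment's care is making sure that taking $H = \phi^{-1}(\overline{H})$ (rather than an arbitrary preimage) is what guarantees both $\phi(H) = \overline{H}$ exactly and $|G:H| = |\overline{G}:\overline{H}|$ exactly; choosing any smaller preimage would break the index equality, which is what ties the exponent $|G:H|$ on the $G$ side to the exponent $|\overline{G}:\overline{H}|$ on the quotient side. Once that is set up, everything else is immediate.
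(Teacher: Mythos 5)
Your argument is correct and is essentially identical to the paper's proof: pull back $\overline{H}$ to $H=\phi^{-1}(\overline{H})$, apply the \wexp{} property of $H$ in $G$, and push the containment $x^{|G:H|}\in H^g$ through $\phi$ using $|G:H|=|\overline{G}:\overline{H}|$. (Your opening remark about proving ``the contrapositive'' is a slip of phrasing---what you actually give is the direct argument---but it has no bearing on the proof's validity.)
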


\begin{proof}
 Let $G$ be a \wic{} group, and let $\phi: G \to \overline{G}$ be a surjective homomorphism.  Let $\overline{x} \in \overline{G}$ and $\overline{H} \le \overline{G}$.  Define $H \colonequals \phi^{-1}(\overline{H}) \le G$, and let $x$ be any element of $G$ in $\phi^{-1}(\overline{x})$.  Since $G$ is \wic, there exists $g \in G$ such that 
 \[ x^{|G:H|} \in H^g,\]
 and hence 
 \[ \overline{x}^{|\overline{G}:\overline{H}|} = \overline{x}^{|G:H|} \in \overline{H}^{\overline{g}}.\]
 Thus, $\overline{H} \lwexp \overline{G}$.  Since $\overline{H}$ was arbitrary, $\overline{G}$ is \wic.
\end{proof}


\section{Solvable groups are \wic}
\label{sect:solv}

The goal of this section is to prove that the issues raised in Example \ref{ex:easy} are essentially all that prevent subgroups of solvable groups from being exponential; that is, all subgroups of solvable groups are \wexp{} (and hence all solvable groups are \wic{}, justifying the terminology).  To prove this, we need the following lemma on the exponent of a Sylow $p$-subgroup of the affine general linear group $\AGL(n,p)$.


\begin{lem}
 \label{lem:expAGL}
 Let $P$ be a Sylow $p$-subgroup of $\AGL(n,p)$.  Then, $\exp(P) \le p^n$.
\end{lem}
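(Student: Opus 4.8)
The plan is to realize $\AGL(n,p)$ concretely and locate a specific Sylow $p$-subgroup whose exponent is easy to bound, then invoke conjugacy of Sylow subgroups to conclude the bound holds for every Sylow $p$-subgroup. Recall $\AGL(n,p) = \F_p^n \rtimes \GL(n,p)$, and the $p$-part of $|\AGL(n,p)|$ is $p^n \cdot |\GL(n,p)|_p = p^n \cdot p^{\binom{n}{2}} = p^{n + \binom{n}{2}}$. A natural candidate Sylow $p$-subgroup is $P = \F_p^n \rtimes U$, where $U$ is the group of upper unitriangular matrices in $\GL(n,p)$ (a Sylow $p$-subgroup of $\GL(n,p)$); this has the right order. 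I would identify $P$ with the group of $(n+1)\times(n+1)$ upper unitriangular matrices over $\F_p$ acting on $\F_p^{n+1}$ — concretely, the affine transformation $v \mapsto Av + b$ corresponds to the block matrix $\begin{pmatrix} A & b \\ 0 & 1\end{pmatrix}$, and when $A \in U$ this is genuinely upper unitriangular of size $n+1$.

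The key step is then the standard fact that the group of $m \times m$ upper unitriangular matrices over $\F_p$ has exponent at most $p^{\lceil \log_p m \rceil}$, or more crudely, that if $N$ is a matrix with $N^m = 0$ (nilpotent of size $m$) then $(I+N)^{p^k} = I$ as soon as $p^k \ge m$. Here $m = n+1$, so every element of $P$ has order dividing $p^k$ where $p^k$ is the least power of $p$ that is $\ge n+1$. I would verify the inequality $p^k \le p^n$ for this least $k$: since $p^{k-1} < n+1 \le p^n$ (using $n \ge 1$ and $p \ge 2$, so $p^n \ge 2^n \ge n+1$), we get $p^{k-1} \le n$, hence... wait, I need $p^k \le p^n$, i.e. $k \le n$. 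From $p^{k-1} < n+1$ and $p \ge 2$ we get $2^{k-1} \le p^{k-1} < n+1$, so $k - 1 < \log_2(n+1)$, giving $k \le \log_2(n+1) < n$ for $n \ge 1$ (checking small cases $n=1,2$ by hand). Therefore $\exp(P) \mid p^k$ with $p^k \le p^n$, so $\exp(P) \le p^n$.

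Finally, since all Sylow $p$-subgroups of $\AGL(n,p)$ are conjugate and conjugation is an isomorphism preserving exponent, every Sylow $p$-subgroup of $\AGL(n,p)$ has exponent at most $p^n$. I expect the main (minor) obstacle to be the bookkeeping in the two small lemmas: (i) correctly justifying that a nilpotent $N$ of nilpotency index $\le m$ satisfies $(I+N)^{p^k} = I + N^{p^k} + (\text{terms with binomial coefficients divisible by }p) = I$ once $p^k \ge m$ — this uses that $\binom{p^k}{j}$ is divisible by $p$ for $1 \le j \le p^k - 1$ in characteristic $p$, so $(I+N)^{p^k} = I + N^{p^k}$ — and (ii) the elementary inequality $\lceil \log_p(n+1)\rceil \le n$. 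Neither is deep, but both should be stated carefully since the bound $p^n$ (rather than something smaller) is exactly what later arguments will need.
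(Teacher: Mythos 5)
Your argument is correct, but it takes a genuinely different route from the paper. The paper never picks a concrete Sylow subgroup: it writes $P \cong C_p^n \rtimes Q$ with $Q$ a Sylow $p$-subgroup of $\GL(n,p)$, uses the general fact that $\exp(P)$ divides $\exp(C_p^n)\cdot\exp(Q) = p\cdot\exp(Q)$, and then bounds $\exp(Q) \le p^{n-1}$ via the Cayley--Hamilton theorem (an element of $\GL(n,p)$ has order at most $p^n-1$, so a $p$-element has order at most $p^{n-1}$). You instead realize a specific Sylow subgroup $\F_p^n \rtimes U$ as the full upper unitriangular group of $\GL(n+1,p)$ via the standard block embedding $v \mapsto Av+b \leftrightarrow \left(\begin{smallmatrix} A & b \\ 0 & 1\end{smallmatrix}\right)$, and bound its exponent by writing each element as $I+N$ with $N^{n+1}=0$ and using that $\binom{p^k}{j} \equiv 0 \pmod p$ for $0 < j < p^k$, giving $(I+N)^{p^k}=I$ once $p^k \ge n+1$; Sylow conjugacy then transfers the bound to all Sylow $p$-subgroups. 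Your method actually yields the sharper estimate $\exp(P) \le p^{\lceil \log_p(n+1)\rceil}$, far below the $p^n$ that the lemma (and its later use) requires, at the cost of a concrete matrix identification and the binomial-coefficient lemma; the paper's proof is shorter and needs no explicit Sylow subgroup. One small blemish: your chain ``$k \le \log_2(n+1) < n$'' fails at $n=1$ (where $\log_2 2 = 1$), but this is harmless since the clean statement you also give, $p^n \ge 2^n \ge n+1$, immediately forces the least $k$ with $p^k \ge n+1$ to satisfy $k \le n$, which is all you need.
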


\begin{proof}
 Since $\AGL(n,p) = C_p^n \rtimes \GL(n,p)$, $P \cong C_p^n \rtimes Q$, where $Q$ is a Sylow $p$-subgroup of $\GL(n,p)$.  Thus, $\exp(P) \le \exp(Q) \cdot p$.
 
 Let $g \in \GL(n,p)$.  By the Cayley-Hamilton Theorem, $g$ satisfies $\chi(x)$, where $\chi(x) \colonequals \det(xI - g)$ is the characteristic polynomial of $g$.  Thus, $g$ generates a subalgebra of $\GF(p)[g]$ containing at most $p^n - 1$ nonzero elements, which implies $|g| \le p^n - 1$.  If $g \in Q$, this means $|g| \le p^{n-1}$.  The result follows.
\end{proof}

We are now ready to prove that all solvable groups are \wic.

\begin{proof}[Proof of Theorem \ref{thm:solvwic}]
 Assume $G$ is a finite solvable group.  Note that, since all subgroups of nilpotent groups are exponential, all nilpotent groups are \wic.  We proceed by induction on $|G|$; that is, assume all solvable groups with order less than $|G|$ are \wic.  By Lemma \ref{lem:checkmax}, it suffices to prove that all maximal subgroups of $G$ are \wexp.  Let $M$ be a maximal subgroup of $G$.
 
 Let $N$ be a nontrivial normal subgroup of $G$, and let $x \in G$. Assume first that $N \le M$. Let $\overline{G} = G/N$ and $\overline{M} = M/N \le \overline{G}$. By the inductive hypothesis, $\overline{G}$ is \wic, so 
\begin{equation}\label{eq1}
\overline{x}^{|\overline{G}:\overline{M}|} \in \overline{M}^{\overline{g}}
\end{equation}
 for some $Ng = \overline{g} \in \overline{G}$, where $\overline{x} = Nx$ and $g \in G$. Since $|\overline{G}:\overline{M}| = |G:M|$, \eqref{eq1} implies that $x^{|G:M|} \in M^g$, 
 showing that $M$ is \wexp{} with respect to $G$.  

Now, assume that $N \not\le M$ for all nontrivial normal subgroups $N$ of $G$.  Since $G$ is solvable, it contains a minimal normal subgroup $N \cong C_p^n$ (see, for example, \cite[Lemma 3.11]{Isaacs_2008}), where $n$ is a positive integer and $p$ is a prime.  Since $M$ is maximal and $N \not\le M$, we have $G = NM$.  Moreover, $M \cap N$ is a proper subgroup of $N$ that is normal in both $M$ and $N$ (since $N$ is elementary abelian), and hence $M \cap N$ is normal in $G$; that is, $M \cap N = \{1\}$ and $G = N \rtimes M$.  Furthermore, since $N \cong C_p^n$, there is a natural homomorphism $\phi: M \to \GL(n,p)$.  Since $\Ker(\phi)$ is a normal subgroup of $M$ that centralizes $N$, $\Ker(\phi)$ is a normal subgroup of $G$ that is contained in $M$. By assumption, $\Ker(\phi) = \{1\}$, and so $\phi$ is faithful. Thus, $M \liso \GL(n,p)$ and $G \liso \AGL(n,p)$.
 
 Let $x \in G$, and suppose $|x| = p^j \cdot k$, where $\gcd(p,k) = 1$.  There exist $s,t \in \Z$ such that $p^j s + kt = 1$, so
 \[ x = ( x^{p^j})^s (x^k )^t \in \langle x^{p^j}, x^k \rangle. \]
Thus, to conclude that $M$ is \wexp{} with respect to $G$, it suffices to show that there is a conjugate of $M$ containing both $(x^{p^j})^{|G:M|}$ and $(x^k)^{|G:M|}$.

First, consider $x^k$. Since $|x^k| = p^j$, $x^k$ is contained in a Sylow $p$-subgroup of $G \liso \AGL(n,p)$.  By Lemma \ref{lem:expAGL}, 
\begin{equation}\label{x^k eq}
(x^k)^{|G:M|} = (x^k)^{p^n} = 1.
\end{equation}
Next, we work with $x^{p^j}$, which has order $k$.  Since $k$ is coprime to $p$ and $G$ is solvable, $x^{p^j}$ is contained in a Hall $p'$-subgroup $H$ of $G$ \cite[Theorem 3.13]{Isaacs_2008}.  Furthermore, since $|M| = |G|/p^n$, $M$ contains a Hall $p'$-subgroup $L$ of $G$.  By Hall's Theorem \cite[Theorem 3.14]{Isaacs_2008}, $H$ and $L$ are conjugate in $G$. So, there exists $g \in G$ such that $L^g = H$, and hence
\begin{equation}\label{x^{p^j} eq}
x^{p^j} \in H = L^g \subseteq M^g.
\end{equation}
By \eqref{x^k eq} and \eqref{x^{p^j} eq}, both $(x^{p^j})^{|G:M|}$ and $(x^k)^{|G:M|}$ are in $M^g$, which completes the proof.
\end{proof}


\section{Almost all symmetric and alternating groups are \indexc}
\label{sect:SnAn}

The purpose of this section is to determine which symmetric and alternating groups are \indexc. Apart from three exceptions ($A_5$, $A_6$, and $S_6$), all symmetric and alternating groups that are not solvable are \indexc. 

In the course of proving that a group $G$ is \wic, we will frequently use the following elementary lemma to help verify that a subgroup of $G$ is \wexp{} with respect to $G$.

\begin{lem}\label{lem:sledgehammer}
Let $H \le G$ and let $y \in G$ be such that $|y|=p^k$ for some prime $p$ and some $k \ge 0$. If $H$ contains a Sylow $p$-subgroup of $G$, then $y \in H^g$ for some $g \in G$.
\end{lem}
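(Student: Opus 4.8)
The plan is to deduce this immediately from Sylow's theorems; there is essentially nothing to do beyond bookkeeping. Since $|y| = p^k$ is a power of $p$, the cyclic subgroup $\langle y \rangle$ is a $p$-group, so by Sylow's theorems it is contained in some Sylow $p$-subgroup $P$ of $G$. (If $k = 0$ then $y = 1$ and the statement holds trivially with $g = 1$; the argument below works uniformly in any case.)

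By hypothesis, $H$ contains a Sylow $p$-subgroup $Q$ of $G$. All Sylow $p$-subgroups of $G$ are conjugate, so there exists $g \in G$ with $Q^g = P$. Since conjugation by $g$ is an automorphism of $G$, the containment $Q \le H$ gives $Q^g \le H^g$, and therefore
\[ y \in \langle y \rangle \le P = Q^g \le H^g, \]
which is exactly the assertion. The only point worth a moment's care is the direction of conjugation: we need the Sylow subgroup $Q$ lying inside $H$ to be conjugated \emph{onto} the Sylow subgroup $P$ containing $y$, and Sylow's conjugacy theorem supplies a conjugating element realizing this.

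I do not anticipate any obstacle here. The lemma is simply a convenient repackaging of the Sylow machinery, isolated so that it can be cited repeatedly when verifying that various subgroups are \wexp{} with respect to $S_n$, $A_n$, and $\PSL(2,q)$ in the sections that follow.
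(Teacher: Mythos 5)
Your proof is correct and follows exactly the paper's argument: $\langle y\rangle$ lies in some Sylow $p$-subgroup, and Sylow conjugacy carries the Sylow $p$-subgroup inside $H$ onto it, placing $y$ in a conjugate $H^g$. The paper states this in one line; your version just spells out the conjugation bookkeeping.
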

\begin{proof}
The element $y$ is contained in a Sylow $p$-subgroup of $G$, and all such subgroups are conjugate in $G$.
\end{proof}

We begin by considering the cases when $n = 5$ or $6$.    

\begin{lem}
 \label{lem:A5}
 The group $A_5$ is \wic.
\end{lem}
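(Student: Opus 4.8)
The plan is to use Lemma \ref{lem:checkmax}: since the maximal subgroups of $A_5$ are $A_4$, $D_{10}$ (dihedral of order $10$), and $S_3 \cong D_6$ (of order $6$), all of which are solvable and hence \wic{} by Theorem \ref{thm:solvwic}, it suffices to verify that each of these three conjugacy classes of maximal subgroups is \wexp{} with respect to $A_5$. So for each maximal subgroup $M$ and each $x \in A_5$ I would exhibit a conjugate $M^g$ containing $x^{|A_5:M|}$. The element orders in $A_5$ are $1,2,3,5$, and every element of $A_5$ has prime-power order, which is exactly the situation Lemma \ref{lem:sledgehammer} is designed for: if $M$ contains a Sylow $p$-subgroup of $A_5$ and $x$ has order a power of $p$, then $x^{|A_5:M|}$ (indeed $x$ itself) lies in some conjugate of $M$.

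First I would handle $M = A_4$, of index $5$. A Sylow $2$-subgroup of $A_4$ (the Klein four-group) is a Sylow $2$-subgroup of $A_5$, and $A_4$ also contains Sylow $3$-subgroups of $A_5$; so for $x$ of order $1,2,3$ we are done by Lemma \ref{lem:sledgehammer} directly (no power needed). For $x$ of order $5$ we have $x^5 = 1 \in A_4$, so $x^{|A_5:A_4|} = x^5 = 1$, which is in $A_4$. Hence $A_4 \lwexp A_5$. Next, $M = D_{10}$, of index $6$: it contains a Sylow $5$-subgroup of $A_5$, so for $x$ of order $1$ or $5$ we are done by Lemma \ref{lem:sledgehammer}; and $D_{10}$ contains elements of order $2$, indeed a Sylow $2$-subgroup of $A_5$ has order $4$ — wait, $|D_{10}|=10$ has $2$-part $2$, so $D_{10}$ does \emph{not} contain a Sylow $2$-subgroup of $A_5$; instead I would note that for $x$ of order $2$ or $3$, $\gcd(|x|, 6) \ne 1$ is irrelevant, and instead compute $x^6$: if $|x|=2$ then $x^6 = 1$, and if $|x|=3$ then $x^6 = 1$, so again $x^{|A_5:D_{10}|} = 1 \in D_{10}$. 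Thus $D_{10} \lwexp A_5$. Finally, $M \cong S_3$ of index $10$: for $x$ of order $2$ or $3$, $x^{10} = 1$ when $|x|=2$ and $x^{10}=x$ when $|x|=3$... here I must be careful: $10 \equiv 1 \pmod 3$, so $x^{10}=x$ has order $3$, and I need a conjugate of $S_3$ containing $x$; but $S_3$ contains a Sylow $3$-subgroup of $A_5$, so Lemma \ref{lem:sledgehammer} applies and $x$ lies in some conjugate. For $|x|=2$, $x^{10}=1 \in S_3$; for $|x|=5$, $x^{10}=1 \in S_3$. So $S_3 \lwexp A_5$.

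Having checked all three classes of maximal subgroups, Lemma \ref{lem:checkmax} gives that $A_5$ is \wic. The only subtlety — and the main thing to get right — is keeping track, for each $M$ and each element order $m$, of the residue of $|A_5:M|$ modulo $m$: when that residue is $0$ the conclusion is immediate, and when it is $1$ (so $x^{|A_5:M|}=x$) one falls back on Lemma \ref{lem:sledgehammer}, which requires $M$ to contain a Sylow $p$-subgroup of $A_5$ for the relevant prime $p$. A quick case table over the (at most) three nontrivial element orders and three maximal subgroup classes covers everything; no deep input beyond Theorem \ref{thm:solvwic}, Lemma \ref{lem:checkmax}, and Lemma \ref{lem:sledgehammer} is needed.
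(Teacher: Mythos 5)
Your proof is correct and follows essentially the same route as the paper: reduce to the maximal subgroups $A_4$, $D_5$ (order $10$), and $S_3$ via Theorem \ref{thm:solvwic} and Lemma \ref{lem:checkmax}, then check each class with Lemma \ref{lem:sledgehammer} and the observation that $x^{|G:M|}$ is either trivial or has order handled by a Sylow subgroup contained in $M$. The mid-proof hesitation about Sylow $2$-subgroups of $D_{10}$ is resolved correctly, so no gap remains.
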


\begin{proof}
 The maximal subgroups of $G = A_5$ are isomorphic to $S_3$ (index $10$, one conjugacy class), $D_{5}$ (index $6$, one conjugacy class; here, $D_n$ indicates a dihedral group of order $2n$), and $A_4$ (index $5$, one conjugacy class).  These groups are all solvable, and hence \wic{} by Theorem \ref{thm:solvwic}. So, by Lemma \ref{lem:checkmax}, it suffices to verify that each of these subgroups is \wexp{} with respect to $G$. Note that elements of $G$ have order 1, 2, 3 or 5.

Let $M$ be a maximal subgroup of $G$ and let $x \in G$. If $M \cong S_3$, then $|x^{|G:M|}| \in \{1, 3\}$ and $M$ contains a Sylow 3-subgroup of $G$. If $M \cong D_5$, then $|x^{|G:M|}| \in \{1, 5\}$ and $M$ contains a Sylow 5-subgroup of $G$. Finally, if $M \cong A_4$, then $|x^{|G:M|}| \in \{1, 2, 3\}$ and $M$ contains both a Sylow 2-subgroup and a Sylow 3-subgroup of $G$. In all cases, by Lemma \ref{lem:sledgehammer} we have $x^{|G:M|} \in M^g$ for some $g \in G$. Thus, every maximal subgroup of $G$ is \wexp{} with respect to $G$, as desired.
 \end{proof}
 
 \begin{lem}
 \label{lem:A6}
 The group $A_6$ is \wic.
\end{lem}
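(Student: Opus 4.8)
The plan is to imitate the proof of Lemma \ref{lem:A5} as closely as possible: list the maximal subgroups of $A_6$, check that each is itself \wic{} (so that Lemma \ref{lem:checkmax} applies), and then verify that each maximal subgroup is \wexp{} with respect to $A_6$. The maximal subgroups of $A_6$ fall into the following conjugacy classes: $A_5$ (index $6$, two conjugacy classes), $S_4$ (index $15$, two conjugacy classes), and $(C_3 \times C_3) \rtimes C_4$ of order $36$ (index $10$, one conjugacy class). The groups $S_4$ and the order-$36$ group are solvable, hence \wic{} by Theorem \ref{thm:solvwic}; the subgroups isomorphic to $A_5$ are \wic{} by Lemma \ref{lem:A5}. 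So Lemma \ref{lem:checkmax} reduces everything to checking that each maximal $M$ is \wexp{} with respect to $G = A_6$.

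For the \wexp{} verification, recall that elements of $A_6$ have order $1, 2, 3, 4,$ or $5$. The main tool is Lemma \ref{lem:sledgehammer}: if $x^{|G:M|}$ has prime-power order and $M$ contains a Sylow subgroup for that prime, then $x^{|G:M|} \in M^g$ for some $g$. For $M \cong A_5$ of index $6$, the element $x^6$ has order $1$ or $5$ (since $x$ has order $1,2,3,4,5$, $x^6$ has order dividing $5$), and $A_5$ contains a Sylow $5$-subgroup of $A_6$ (both have order $5$); so Lemma \ref{lem:sledgehammer} applies. For $M \cong S_4$ of index $15$, the element $x^{15}$ has order $1, 2,$ or $4$ (a $2$-power), and $S_4$ contains a Sylow $2$-subgroup of $A_6$ (both have order $8$); again Lemma \ref{lem:sledgehammer} applies. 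For the order-$36$ subgroup $M$ of index $10$, the element $x^{10}$ has order $1$ or $3$ (since $\gcd(10, k) $ kills the $2$- and $5$-parts of $|x|$), and $M$ contains a Sylow $3$-subgroup of $A_6$ (both have order $9$); Lemma \ref{lem:sledgehammer} applies once more. In every case $x^{|G:M|} \in M^g$ for some $g \in G$, so every maximal subgroup is \wexp{} with respect to $A_6$, and $A_6$ is \wic.

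The only real obstacle is making sure the list of maximal subgroups of $A_6$ is complete and correct and that the relevant Sylow containments hold; this is standard (the subgroup structure of $A_6$ is classical, and can be confirmed with GAP \cite{GAP4}), so the argument is essentially bookkeeping once the maximal subgroups are in hand. One subtlety worth double-checking: the index of each maximal subgroup must be coprime to enough primes that $x^{|G:M|}$ really does land in prime-power order — this works out precisely because $|A_6| = 360 = 2^3 \cdot 3^2 \cdot 5$ and the indices $6 = 2 \cdot 3$, $15 = 3 \cdot 5$, $10 = 2 \cdot 5$ each omit exactly one prime while the corresponding maximal subgroup contains a full Sylow subgroup for that missing prime. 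I would present the proof in the same compact table-free style as Lemma \ref{lem:A5}.
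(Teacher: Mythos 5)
There is a genuine gap in your order computations, and it matters because it breaks the applicability of Lemma \ref{lem:sledgehammer} in two of your three cases. An element $x \in A_6$ of order $4$ satisfies $x^{6} = x^{2}$ and $x^{10} = x^{2}$, both of order $2$, since $6$ and $10$ are divisible by $2$ only to the first power. So for $M \cong A_5$ (index $6$) the possible orders of $x^{|G:M|}$ are $\{1,2,5\}$, not $\{1,5\}$, and for $M \cong (C_3 \times C_3) \rtimes C_4$ (index $10$) they are $\{1,2,3\}$, not $\{1,3\}$; your heuristic that each index ``omits exactly one prime'' confuses divisibility of the index by $p$ with the index killing the full $p$-part of $|x|$. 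The leftover order-$2$ case cannot be handled by Lemma \ref{lem:sledgehammer} for these two subgroups: $|A_6|_2 = 8$, while $|A_5|_2 = 4$ and $|(C_3 \times C_3) \rtimes C_4|_2 = 4$, so neither contains a Sylow $2$-subgroup of $A_6$.

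The gap is fixable, and the paper's proof shows how: $A_6$ has a single conjugacy class of involutions, and both $A_5$ and the order-$36$ subgroup contain involutions, so when $x^{|G:M|}$ has order $2$ it lies in some conjugate of $M$ anyway. Your treatment of $M \cong S_4$ (index $15$) is correct as written, since there $|x^{15}| \in \{1,2,4\}$ and $S_4$ does contain a full Sylow $2$-subgroup of $A_6$; and your overall framework (maximal subgroups are \wic{} by Theorem \ref{thm:solvwic} and Lemma \ref{lem:A5}, then apply Lemma \ref{lem:checkmax}) matches the paper. You need only correct the two order lists and add the conjugacy-of-involutions argument for the $A_5$ and $(C_3 \times C_3) \rtimes C_4$ cases.
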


\begin{proof}
Let $G = A_6$.  The maximal subgroups of $G$ are isomorphic to $S_4$ (index $15$, two conjugacy classes), $(C_3 \times C_3) \rtimes C_4$ (index $10$, one conjugacy class), and $A_5$ (index $6$, two conjugacy classes). Every element $x$ of $G$ is contained in a cyclic group of order $3$, $4$, or $5$. Let $M$ be a maximal subgroup of $G$. If $M \cong S_4$, then $|x^{|G:M|}| \in \{1, 2, 4\}$ and $M$ contains a Sylow 2-subgroup of $G$, so Lemma \ref{lem:sledgehammer} applies in this case. If $M \cong (C_3 \times C_3) \rtimes C_4$, then $|x^{|G:M|}| \in \{1, 2, 3\}$. This time, $M$ contains a Sylow 3-subgroup of $G$, but not a Sylow 2-subgroup of $G$. However, all elements of order 2 are conjugate in $G$, so $x^{|G:M|} \in M^g$ for some $g \in G$ regardless of the order of $x^{|G:M|}$. Lastly, if $M \cong A_5$, then $|x^{|G:M|}| \in \{1, 2, 5\}$ and $M$ contains a Sylow 5-subgroup of $G$. Here, we may proceed as in the prior case to conclude that $M \lwexp G$.
 \end{proof}
 
 \begin{lem}
  \label{lem:S5}
  The group $S_5$ is \IEe.
 \end{lem}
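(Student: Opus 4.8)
The plan is to verify the two defining conditions of an \IEe{} group directly: that every \emph{proper} quotient of $S_5$ is \wic{}, and that $S_5$ itself is \indexc{}. The first condition will be immediate, and the second will follow by exhibiting a single subgroup that fails to be \wexp.

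For the quotients: the normal subgroups of $S_5$ are exactly $\{1\}$, $A_5$, and $S_5$, so the only proper quotients of $S_5$ are $S_5/A_5 \cong C_2$ and the trivial group. Each of these is abelian, hence nilpotent, hence \wic{} (all subgroups of a nilpotent group are exponential, so this is also a special case of Theorem \ref{thm:solvwic}). Thus all proper quotients of $S_5$ are \wic.

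For the remaining half, I would take $H$ to be a point stabilizer in $S_5$, so that $H \cong S_4$ and $|S_5 : H| = 5$, and I would test it against $x = (1\,2)(3\,4\,5)$, an element of order $6$. Then $x^{|S_5:H|} = x^{5} = x^{-1}$ again has order $6$. But every conjugate $H^g$ is again a point stabilizer, hence isomorphic to $S_4$, and the element orders occurring in $S_4$ are only $1, 2, 3, 4$; in particular $S_4$ has no element of order $6$, even though $6 \mid \exp(S_4) = 12$. Therefore $x^{|S_5:H|} \notin H^g$ for every $g \in S_5$, so $H \not\lwexp S_5$, and $S_5$ is \indexc. Combined with the previous paragraph, $S_5$ is \IEe.

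There is no serious obstacle in this argument: the one step requiring a choice is finding a non-\wexp{} subgroup, and the point stabilizer $S_4$ works precisely because it has an ``element-order gap'' — it omits order $6$, which nevertheless divides its exponent — so an order-$6$ element of $S_5$ cannot have its $5$th power land in any conjugate of it. Note that, because all proper quotients of $S_5$ are trivially \wic, no induction over maximal subgroups and no appeal to Lemma \ref{lem:checkmax} is required here; and this same idea (pairing a point stabilizer with a suitably chosen element) is the seed of the argument that will later handle $S_n$ for larger $n$ in Theorem \ref{thm:Sn}.
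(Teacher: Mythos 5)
Your proof is correct and follows essentially the same route as the paper: both exhibit an order-$6$ element (a disjoint $2$-cycle times $3$-cycle) and the point stabilizer $S_4$ of index $5$, note that the fifth power still has order $6$ while $S_4$ (and hence every conjugate of it) has no element of order $6$, and then observe that the only proper quotients of $S_5$ are \wic. No differences worth noting.
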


 \begin{proof}
 Let $G = S_5$.  This group is not \wic. Take $x = (1 \; 2 \; 3)(4 \; 5)$ and $M = \Sym(\{1,2,3,4\}) = S_4$.  Then, $|x| = 6$ and $|G:M| = 5$, so $|x^{|G:M|}| = 6$.  Since $S_4$ does not contain an element of order $6$, $M$ is not \wexp{} with respect to $G$.  The only proper quotient of $G$ is solvable and thus \wic{} by Theorem \ref{thm:solvwic}, so $G$ is \IEe.
 \end{proof}
 
 \begin{lem}
 \label{lem:S6}
 The group $S_6$ is \wic{}.
\end{lem}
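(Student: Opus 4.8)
The plan is to mimic the strategy behind Lemma~\ref{lem:checkmax}, adjusted to account for the fact that two of the maximal subgroups of $G=S_6$ are not \wic{}. Up to conjugacy, the maximal subgroups of $S_6$ are $A_6$ (index $2$); the point stabilizer $\Sym\{1,\dots,5\}$ and the transitive copy $\PGL(2,5)$, both isomorphic to $S_5$ (index $6$); $S_2\times S_4$ (index $15$); $S_3\wr S_2$ (index $10$); and $S_2\wr S_3$ (index $15$). Here $A_6$ is \wic{} by Lemma~\ref{lem:A6}, while $S_2\times S_4$, $S_3\wr S_2$, $S_2\wr S_3$ are solvable and hence \wic{} by Theorem~\ref{thm:solvwic}; the two copies of $S_5$ are \indexc{} by Lemma~\ref{lem:S5}, so Lemma~\ref{lem:checkmax} does not apply directly.

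First I would check that each of the four \wic{} maximal subgroups $A_6$, $S_2\times S_4$, $S_3\wr S_2$, $S_2\wr S_3$ is \wexp{} with respect to $G$; by Lemma~\ref{lem:goodmax}, this makes every subgroup of $G$ contained in a conjugate of one of them \wexp{} with respect to $G$. Each of these checks is short, because the element orders of $S_6$ are $1,2,3,4,5,6$: for a maximal subgroup $M$ of index $m$ one computes that $x^m$ always has prime-power order lying in a short list and then invokes Lemma~\ref{lem:sledgehammer}, using that two elements of $S_6$ are conjugate exactly when they have the same cycle type. For instance $x^2\in A_6$ for all $x$; for $M=S_2\times S_4$ or $S_2\wr S_3$ (of order $48$, hence containing a full Sylow $2$-subgroup of $S_6$) one has $|x^{15}|\in\{1,2,4\}$; and for $M=S_3\wr S_2$ (containing a Sylow $3$-subgroup of $S_6$) one has $|x^{10}|\in\{1,2,3\}$, where $|x^{10}|=2$ forces $|x|=4$, so that $x^{10}=x^2$ has cycle type $2^2\,1^2$, which is realized inside $S_3\wr S_2$.

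It remains to treat subgroups $H$ lying in a conjugate of $\Sym\{1,\dots,5\}$ or of $\PGL(2,5)$ but in no conjugate of the four subgroups above. Since automorphisms of $S_6$ preserve both the order of a subgroup and the property of being \wexp{} with respect to $G$, and the only maximal subgroups of $S_6$ of order $120$ are the two classes of $S_5$, an outer automorphism interchanging them reduces us to $H\le\Sym\{1,\dots,5\}$, so $H$ fixes the point $6$. If $H$ is intransitive on $\{1,\dots,5\}$, a short case check shows that the orbit partition of $H$ on $\{1,\dots,6\}$ coarsens to a $2+4$, $3+3$, or $2+2+2$ partition, putting $H$ in a conjugate of $S_2\times S_4$, $S_3\wr S_2$, or $S_2\wr S_3$, a contradiction. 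Hence $H$ is transitive on $\{1,\dots,5\}$, so up to conjugacy $H$ is one of $C_5$, $D_5$, $F_{20}=C_5\rtimes C_4$, $A_5$, $S_5$; the first two and $A_5$ lie in $\Alt\{1,\dots,5\}\le A_6$, leaving only $H\cong F_{20}$ and $H=\Sym\{1,\dots,5\}$. For $F_{20}$ (index $36$), $|x^{36}|\in\{1,5\}$ and $F_{20}$ contains a Sylow $5$-subgroup of $S_6$, so Lemma~\ref{lem:sledgehammer} finishes it; for $\Sym\{1,\dots,5\}$ (index $6$), $|x^6|\in\{1,2,5\}$, with the order-$5$ case handled by Lemma~\ref{lem:sledgehammer} and the order-$2$ case again reducing to cycle type $2^2\,1^2$, which occurs inside $\Sym\{1,\dots,5\}$. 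I expect the main obstacle to be bookkeeping rather than any single difficult step: one must correctly list the maximal subgroups and run the orbit-coarsening argument carefully enough that the only leftover cases are the two ``transitive'' ones, after which each remaining verification is a one-line congruence computation together with an appeal to Lemma~\ref{lem:sledgehammer}.
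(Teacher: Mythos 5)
Your proposal is correct, and its skeleton is the same as the paper's: verify that maximal subgroups are \wexp{} with respect to $G$, push the property down to smaller subgroups via Lemma \ref{lem:goodmax}, and finish the residual cases with Lemma \ref{lem:sledgehammer} together with the fact that elements of $S_6$ of the same cycle type are conjugate (in particular the reduction of the order-$2$ cases to double transpositions, and the treatment of $C_5\rtimes C_4$ of index $36$, are exactly as in the paper). The genuine differences are in how the two $S_5$-classes and the ``leftover'' subgroups are handled. The paper checks all maximal classes directly, including both classes of $S_5$ (placing the double transposition in a point stabilizer), and then simply asserts that the only non-maximal subgroups not contained in a conjugate of $S_4\times S_2$, $S_3\Wr S_2$, or $A_6$ are the $C_5\rtimes C_4$'s; you instead check only the four \wic{} maximal classes, fold the transitive $\PGL(2,5)$ side into the point stabilizer using the outer automorphism of $S_6$ (valid, since being \wexp{} with respect to $G$ is preserved by automorphisms), and then derive the list of remaining subgroups explicitly via the orbit-coarsening/transitivity analysis on $\{1,\dots,5\}$. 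Your route thus supplies the justification the paper leaves implicit for the ``only $C_5\rtimes C_4$ remains'' step, and it avoids having to argue directly that a double transposition lies in a conjugate of the transitive copy of $S_5$; the paper's route is shorter and avoids invoking the exceptional automorphism. One small point to make explicit in your intransitive case: the ``contradiction'' framing tacitly uses that the family of four \wic{} maximal classes is permuted by $\Aut(S_6)$ (the two order-$48$ classes are swapped, $A_6$ and the order-$72$ class are preserved); alternatively you can drop the contradiction entirely, since a subgroup landing inside one of those four is \wexp{} by Lemma \ref{lem:goodmax} anyway.
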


\begin{proof}
 Let $G = S_6$.  The maximal subgroups of $G$ are isomorphic to $S_4 \times S_2$ (index $15$, two conjugacy classes), $S_3 \Wr S_2$ (index $10$, one class), $S_5$ (index $6$, two classes), and $A_6$ (index $2$, one class). Furthermore, $G$ contains three classes of elements of order $2$, two classes of order $3$, two classes of order $4$, one class of order $5$, and two classes of order $6$. We will begin by checking that $M \lwexp G$ for each maximal subgroup $M$ of $G$.  Let $x \in G$.

If $M \cong A_6$, then $M$ is normal in $G$ and hence $M \lexp G$. If $M \cong S_4 \times S_2$, then $|x^{|G:M|}| \in \{1,2,4\}$ and $M$ contains a Sylow 2-subgroup of $G$, so we may use Lemma \ref{lem:sledgehammer}. 

Next, if $M \cong S_3 \Wr S_2$, then $|x^{|G:M|}| \in \{1,2,3\}$ and $M$ contains a Sylow 3-subgroup of $G$, so Lemma \ref{lem:sledgehammer} applies if $|x^{|G:M|}| \ne 2$. On the other hand, if $|x^{|G:M|}|=2$, then $x$ is either a $4$-cycle or the product of a $4$-cycle and a $2$-cycle; either way, $x^{|G:M|}$ is a product of two disjoint $2$-cycles. Such a product stabilizes a decomposition of $\{1, \dots, 6\}$ into two sets of size $3$, so a product of two disjoint $2$-cycles is contained in a conjugate of $M$ in $G$.  Thus, $M \lwexp G$.

Finally, consider the case where $M \cong S_5$. This time, $|x^{|G:M|}| \in \{1,2,5\}$ and $M$ contains a Sylow 5-subgroup of $G$. If $|x^{|G:M|}| = 2$, then, by similar reasoning as above, $x^{|G:M|}$ is a product of two disjoint $2$-cycles.  Thus, $x^{|G:M|}$ fixes an element of $\{1,\dots, 6\}$ and hence is contained in the stabilizer of that element in $G$, which is isomorphic to $S_5$ and a conjugate of $M$.

At this point, we know that every maximal subgroup of $G$ is \wexp{} with respect to $G$. Since the maximal subgroups isomorphic to one of $S_4 \times S_2$, $S_3 \Wr S_2$, or $A_6$ are \wic{} (Theorem \ref{thm:solvwic} or Lemma \ref{lem:A6}), by Lemma \ref{lem:goodmax}, any subgroup of $G$ that is contained in a maximal subgroup isomorphic to one of these is \wexp{} with respect to $G$. The only proper subgroups of $G$ that are not maximal and not contained in a subgroup isomorphic to one of $S_4 \times S_2$, $S_3 \Wr S_2$, or $A_6$ are isomorphic to $C_5 \rtimes C_4$ and have index $36$ in $G$. Let $H \cong C_5 \rtimes C_4$ be such a subgroup. Then, $|x^{|G:H|}| \in \{1,5\}$ and $H$ contains a Sylow 5-subgroup of $G$. Thus, $H \lwexp G$ by Lemma \ref{lem:sledgehammer}, which completes the proof.
\end{proof}

We are now ready to prove the classification of \IEe{} groups that are symmetric or alternating, starting with symmetric groups.

\begin{proof}[Proof of Theorem \ref{thm:Sn}]
 Let $G = S_n$.  When $n \le 4$, $S_n$ is solvable and hence \wic{} by Theorem \ref{thm:solvwic}.  By Lemmas \ref{lem:S5} and \ref{lem:S6}, the group $S_5$ is \IEe{} and the group $S_6$ is \wic.  So, assume $n \ge 7$.

We may express $n$ as $n=k+\ell$, where $\gcd(k,n) = \gcd(\ell, n) = 1$ and $\ell \ge k \ge 3$. Explicitly, when $n$ is odd, we can take $k = \tfrac{n-1}{2}$ and $\ell = \tfrac{n+1}{2}$; when $n$ is divisible by $4$, we may set $k = \tfrac{n}{2}-1$ and $\ell = \tfrac{n}{2}+1$; and when $n \equiv 2 \pmod 4$, we can let $k = \tfrac{n}{2}-2$ and $\ell = \tfrac{n}{2}+2$

Take $x = (1 \; 2 \; \dots \; k)(k+1 \; \dots \; n)$, the product of a disjoint $k$-cycle with a disjoint $\ell$-cycle, and let $H = S_{n-1}$.  Since $|G:H| = n$, $x^{|G:H|}$ is still the product of a disjoint $k$-cycle with a disjoint $\ell$-cycle, and hence $x^{|G:H|}$ does not fix any points in $\{1, \dots , n\}$.  On the other hand, the conjugates of $H$ in $G$ are the stabilizers of points in the natural action, so $x^{|G:H|}$ cannot be contained in any conjugate of $H$.  Thus, $H$ is not \wexp{} with respect to $G$, and, when $n \ge 7$, $S_n$ is \indexc.  The only proper quotient of $S_n$ is $C_2$ when $n \ge 7$, which is solvable and \wic, and so $S_n$ is indeed \IEe{} exactly when $n = 5$ or $n \ge 7$. 
\end{proof}

To end this section, we prove the classification of the \IEe{} alternating groups.

\begin{proof}[Proof of Theorem \ref{thm:An}]
 The groups $A_5$ and $A_6$ are \wic{} by Lemmas \ref{lem:A5} and \ref{lem:A6}, respectively.  Assume $n \ge 7$, and let $G = A_n$.
 
 If $n$ is even, then the choice of $x \in S_n$ in the proof of Theorem \ref{thm:Sn} is the product of two odd cycles, and hence $x \in A_n$.  If we choose $H = A_{n-1}$, then $|G:H| = n$.  The proof that $x^{|G:H|} \notin H^g$ for any $g \in G$ is analogous to the proof for $S_n$, and $H$ is not \wexp{} with respect to $G$.  Thus, $G$ is \indexc{} when $n$ is even.
 
 If $n$ is odd, then $n -4$ is odd and $\gcd(n-4, n) = 1$.  Consider
 \[ x = (1 \; 2 \; \dots \; n -4)(n-3 \; n - 2)(n-1 \; n).\]
 If we choose $H = A_{n-1}$, then $|G:H| = n$, and the conjugates of $H$ in $G$ are the stabilizers of a point in the natural action.  On the other hand, $x^{|G:H|}$ is still the product of an $(n-4)$-cycle with two disjoint $2$-cycles, so $x^{|G:H|}$ fixes no point in the natural action of $A_n$ on $\{1,\dots, n\}$ and is not in any conjugate of $H$ in $G$.  Thus, $H$ is not \wexp{} with respect to $G$, and $G$ is \indexc. Since $G$ is simple, it is in fact \IEe.
\end{proof}


\section{\Wic{} groups isomorphic to $\PSL(2,q)$}
\label{sect:PSL}

\subsection{Structure of $\PSL(2,q)$}

The maximal subgroups of $\PSL(2,q)$ were classified by Dickson \cite{Dickson_1958}, where $q$ is a prime power.  Here, we split the result into cases depending on $q$ (and exclude some small groups, which exhibit exceptional behavior). In what follows, the dihedral group $D_n$ has order $2n$, and, unless otherwise stated, all maximal subgroups in a particular class are conjugate.

\begin{thm}
\label{thm:maxPSL}
 Let $q = p^d \ge 4$ be a prime power, $q \neq 5, 7, 9, 11$, and let $e = \gcd(p-1, 2)$.  Then, the maximal subgroups of $G = \PSL(2,q)$ are listed in Tables \ref{tbl:PSLmax1} (which applies to all $q$), \ref{tbl:PSLmax2} ($q = p$ prime), \ref{tbl:PSLmax3} ($q$ even), \ref{tbl:PSLmax4} ($q > p$, $q$ odd). 
\end{thm}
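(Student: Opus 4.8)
The statement is the classical determination of the maximal subgroups of $\PSL(2,q)$, going back to Dickson, so the plan is not to prove it from scratch but to organize it from Dickson's description of the \emph{entire} subgroup lattice of $G=\PSL(2,q)$ in \cite{Dickson_1958} (equivalent modern accounts appear in the books of Suzuki and of Huppert, and the tables match the $2$-dimensional case of the Bray--Holt--Roney-Dougal classification of maximal subgroups of low-dimensional classical groups). Write $q=p^d$ and $e=\gcd(p-1,2)$. The first step is to recall Dickson's list: up to conjugacy, every subgroup of $G$ is one of (i) a subgroup of a Borel subgroup $B\cong C_p^{\,d}\rtimes C_{(q-1)/e}$, i.e.\ a point stabilizer in the natural $2$-transitive action of $G$ on the $q+1$ points of the projective line over $\F_q$, together with its subgroups (a ``unipotent-by-cyclic'' group); (ii) a cyclic group $C_m$ or a dihedral group $D_m$ with $m\mid (q-1)/e$ or $m\mid (q+1)/e$ --- the subgroups of a maximal torus or of its normalizer; (iii) one of $A_4$, $S_4$, $A_5$, each occurring precisely under the usual congruence conditions on $q$ (for instance $S_4\le G$ exactly when $q$ is odd and $q\equiv\pm1\pmod 8$, and $A_5\le G$ exactly when $q\equiv 0,\pm1\pmod 5$); and (iv) a subfield subgroup $\PSL(2,q_0)$ or $\PGL(2,q_0)$ with $q=q_0^{\,r}$ (with $\PGL(2,q_0)$ occurring in place of $\PSL(2,q_0)$ for certain parities of $q_0$ and of the extension degree).

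The second step is to read off maximality by checking, for each family, whether a member is properly contained in another subgroup on the list. The Borel subgroup $B$ is maximal because any proper overgroup would have order a multiple of $|B|=q(q-1)/e$ and index dividing $q+1$, and no such subgroup appears on Dickson's list (equivalently, $G$ acts primitively on the projective line). The two dihedral groups $D_{(q-1)/e}$ and $D_{(q+1)/e}$, the full normalizers of the two classes of maximal tori, are maximal for $q$ outside the small exceptional range: by order considerations, their only candidate overgroups on the list would be among (iii) or (iv), and these are excluded once $q\neq 5,7,9,11$. A subfield subgroup is maximal exactly when the field extension has prime degree $r$ (intermediate subfields would give strictly larger subfield subgroups). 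Finally, an $A_4$, $S_4$, or $A_5$ subgroup is maximal exactly when it is not contained in a larger member of family (iii) (so $A_4$ is maximal only when neither $S_4$ nor a suitable $A_5$ lies in $G$) and is not contained in a proper subfield subgroup $\PSL(2,q_0)$ or $\PGL(2,q_0)$. Sorting these containments and separating the cases $q=p$ prime, $q$ even, and $q$ a proper odd prime power yields exactly Tables~\ref{tbl:PSLmax1}--\ref{tbl:PSLmax4}.

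The groups $\PSL(2,q)$ for $q\in\{5,7,9,11\}$ are excluded from the statement because they are genuinely exceptional: $\PSL(2,4)\cong\PSL(2,5)\cong A_5$, $\PSL(2,7)\cong\PSL(3,2)$, $\PSL(2,9)\cong A_6$ (with two classes of $A_5$), and $\PSL(2,11)$ has an extra class of $A_5$ of index $11$; for these one simply lists the maximal subgroups directly (e.g.\ from the ATLAS or using GAP \cite{GAP4}). The only genuinely delicate part of the argument is the bookkeeping in the last step: pinning down the exact congruence conditions on $q$ under which each of $A_4$, $S_4$, $A_5$ and each subfield subgroup is \emph{maximal} rather than merely a subgroup --- equivalently, tracking precisely which of these families is absorbed into another. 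That case analysis is exactly what the four tables encode, and it is the step that requires care.
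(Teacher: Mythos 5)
The paper gives no proof of this theorem at all: it is stated as Dickson's classical classification, cited directly to \cite{Dickson_1958} and merely reorganized into the four tables. Your proposal takes essentially the same route---deferring to Dickson's subgroup list and then sketching the containment/maximality bookkeeping and the reason for excluding $q=5,7,9,11$---so it is consistent with (and somewhat more detailed than) what the paper does.
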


\begin{center}
 \bgroup
\def\arraystretch{1.5}
\begin{table}[H]
 \begin{tabular}{|c|c|c|c|}
 \hline
  Structure of $M$ & Conditions on $q$ & $|G:M|$ & $\#$ Conj. Classes\\
  \hline
  \hline
  $C_p^d \rtimes C_{(q-1)/e}$& all $q$ & $q+1$ & $1$ \\
  $D_{(q-1)/e}$ & all $q$ & $\frac{q(q+1)}{e}$ & $1$ \\
  $D_{(q+1)/e}$& all $q$ & $\frac{q(q-1)}{e}$ & 1 \\
  \hline
 \end{tabular}
 \caption{Maximal subgroups of $G = \PSL(2,q)$, all $q \ge 4$, $q \neq 5,7,9,11$.}
 \label{tbl:PSLmax1}
\end{table}
\egroup
\end{center}

\begin{center}
\bgroup
\def\arraystretch{1.5}
\begin{table}[H]
 \begin{tabular}{|c|c|c|c|}
 \hline
  Structure of $M$ & Conditions on $p$ & $|G:M|$ & $\#$ Conj. Classes\\
  \hline
  \hline
  $A_5$ & $p \equiv \pm 1 \pmod {10}$ & $\frac{(p-1)p(p+1)}{120}$ & 2 \\
  $A_4$ & $p \equiv \pm 3 \pmod 8$, $p \not\equiv \pm 1 \pmod {10}$ & $\frac{(p-1)p(p+1)}{24}$ & 1 \\
  $S_4$ & $p \equiv \pm 1 \pmod 8$ & $\frac{(p-1)p(p+1)}{48}$ & 2\\
  \hline
 \end{tabular}
 \caption{Other maximal subgroups of $G = \PSL(2,p)$, $p$ prime, $p \ge 13$.}
 \label{tbl:PSLmax2}
\end{table}
\egroup
\end{center}

\begin{center}
\bgroup
\def\arraystretch{1.5}
\begin{table}[H]
 \begin{tabular}{|c|c|c|c|}
 \hline
  Structure of $M$ & Conditions on $q$ & $|G:M|$ & $\#$ Conj. Classes\\
  \hline
  \hline
  $\PGL(2,q_0) \cong \PSL(2,q_0)$ & $q = q_0^r$, $r$ prime, $q_0 > 2$ & $\frac{(q-1)q(q+1)}{(q_0 - 1)q_0(q_0 + 1)}$ & 1 \\
  \hline
 \end{tabular}
 \caption{Other maximal subgroups of $G = \PSL(2,q)$, $q > 2$ even.}
 \label{tbl:PSLmax3}
\end{table}
\egroup
\end{center}

\begin{center}
\bgroup
\def\arraystretch{1.5}
\begin{table}[H]
 \begin{tabular}{|c|c|c|c|}
 \hline
  Structure of $M$ & Conditions on $q$ & $|G:M|$ & $\#$ Conj. Classes\\
  \hline
  \hline
  $A_5$ & $p \equiv \pm 3 \pmod {10}$ and $q = p^2$ & $\frac{(q-1)q(q+1)}{120}$ & 2 \\
  $\PGL(2,q_0)$ & $q = q_0^2$ & $\frac{(q-1)q(q+1)}{2(q_0 - 1)q_0(q_0 + 1)}$ & 2 \\
  $\PSL(2,q_0)$ & $q = q_0^r$, $r$ prime and odd & $\frac{(q-1)q(q+1)}{(q_0 - 1)q_0(q_0 + 1)}$ & 1 \\
  \hline
 \end{tabular}
 \caption{Other maximal subgroups of $G = \PSL(2,q)$, $q = p^d \ge 25$, $q$ odd.}
 \label{tbl:PSLmax4}
\end{table}
\egroup
\end{center}

\begin{lem}
 \label{lem:maxPSLwic}
 Let $p \ge 13$ be a prime.  All maximal subgroups of $\PSL(2,p)$ are \wic{} groups.
\end{lem}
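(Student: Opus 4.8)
The plan is to read the maximal subgroups of $\PSL(2,p)$ off Theorem \ref{thm:maxPSL} and check each one individually. Since $p \ge 13$ is an odd prime, $\PSL(2,p)$ is none of the excluded groups $\PSL(2,5), \PSL(2,7), \PSL(2,9), \PSL(2,11)$, and $e = \gcd(p-1,2) = 2$. Hence the complete list of maximal subgroups (up to conjugacy) is given by Tables \ref{tbl:PSLmax1} and \ref{tbl:PSLmax2}: the Borel subgroup $C_p \rtimes C_{(p-1)/2}$ of index $p+1$; the dihedral groups $D_{(p-1)/2}$ and $D_{(p+1)/2}$; and, depending on the residue of $p$ modulo $8$ and modulo $10$, exactly one of $A_4$, $S_4$, or $A_5$.

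Next I would observe that every maximal subgroup on this list other than $A_5$ is solvable. The Borel subgroup $C_p \rtimes C_{(p-1)/2}$ is metacyclic (an extension of a cyclic group by a cyclic group) and hence solvable; the dihedral groups $D_{(p-1)/2}$ and $D_{(p+1)/2}$ are solvable; and $A_4$ and $S_4$ are solvable. By Theorem \ref{thm:solvwic}, each of these is therefore a \wic{} group. The only maximal subgroup that can fail to be solvable is one isomorphic to $A_5$, and $A_5$ is \wic{} by Lemma \ref{lem:A5}. Combining these observations, every maximal subgroup of $\PSL(2,p)$ is \wic{}, which is exactly the assertion of the lemma.

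There is essentially no genuine obstacle here: the argument is pure bookkeeping built on three inputs already available, namely the Dickson classification of maximal subgroups (Theorem \ref{thm:maxPSL}), the fact that solvable groups are \wic{} (Theorem \ref{thm:solvwic}), and the single computation that $A_5$ is \wic{} (Lemma \ref{lem:A5}). The one point deserving a moment's care is confirming that the hypothesis $p \ge 13$ is precisely what guarantees that Theorem \ref{thm:maxPSL} applies and that Table \ref{tbl:PSLmax2} supplies the complete list of the remaining maximal subgroups, so that no exceptional small cases require separate handling.
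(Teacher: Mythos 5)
Your proposal is correct and follows exactly the paper's argument: by Theorem \ref{thm:maxPSL} every maximal subgroup of $\PSL(2,p)$ for $p \ge 13$ is solvable or isomorphic to $A_5$, and these are \wic{} by Theorem \ref{thm:solvwic} and Lemma \ref{lem:A5}, respectively. (One tiny inaccuracy that does not affect the argument: it is not always \emph{exactly one} of $A_4$, $S_4$, $A_5$ that occurs --- e.g.\ for $p \equiv \pm 1 \pmod 8$ and $p \equiv \pm 1 \pmod{10}$ both $S_4$ and $A_5$ appear as maximal subgroups --- but since each possible type is checked, the conclusion stands.)
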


\begin{proof}
 By Theorem \ref{thm:maxPSL}, all maximal subgroups of $\PSL(2,p)$, $p \ge 13$, are either solvable or isomorphic to $A_5$.  In either case, these groups are \wic.
\end{proof}


We also highlight some well-known facts about conjugacy classes of subgroups and elements of $\PSL(2,q)$ \cite{Dickson_1958}.

\begin{lem}\label{lem:PSLcyclic}
Let $p$ be prime, $d \ge 1$, and $q = p^d \ge 4$.
\begin{enumerate}[(1)]
 \item If $e = \gcd(p-1,2)$, then every element of $\PSL(2,q)$ is contained in a cyclic subgroup isomorphic to $C_{(q-1)/e}$, $C_p$, or $C_{(q+1)/e}$.  Moreover, all cyclic groups of a given order are conjugate in $\PSL(2,q)$.
 \item If $p$ is odd, then $\PSL(2,p)$ contains a unique conjugacy class of elements of order $2$, a unique conjugacy class of elements of order $3$, and at most one conjugacy class of elements of order $4$.
\end{enumerate}
\end{lem}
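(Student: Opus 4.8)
The plan is to derive both statements from the standard description of cyclic subgroups of $\PSL(2,q)$, which ultimately rests on the subgroup structure recorded in Dickson's classification. For part (1), I would start from the well-known fact that $\SL(2,q)$ contains cyclic maximal tori of orders $q-1$ and $q+1$ (the images in $\PGL(2,q)$ of the diagonalizable and the non-split semisimple elements, respectively), together with the unipotent subgroups of order $p$; passing to the quotient $\PSL(2,q) = \SL(2,q)/Z$, where $|Z| = e = \gcd(p-1,2)$, these become cyclic subgroups of orders $(q-1)/e$, $p$, and $(q+1)/e$. The key point is that every element of $\PSL(2,q)$ lies in one of these three families: an element of order prime to $p$ lifts to a semisimple element of $\SL(2,q)$, whose eigenvalues lie either in $\F_q$ (hence it sits in the split torus) or in $\F_{q^2}\setminus\F_q$ (hence in the non-split torus), while an element of order divisible by $p$ must, since $p \parallel |\PSL(2,q)|$ forces its $p$-part to have order exactly $p$ and the only elements of $p$-power order are unipotent, either be unipotent (order $p$) or — if its order is divisible by $p$ and by a prime $\ell \neq p$ — generate a subgroup that is neither cyclic nor contained in a torus, which is impossible for a genuinely cyclic subgroup. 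The conjugacy claim then follows because all split tori are conjugate (they are the point stabilizers of pairs of points on the projective line, and $\PSL(2,q)$ is $2$-transitive there minus diagonal issues, or more simply: all maximal split tori are Sylow-like in the appropriate Hall sense), all non-split tori are conjugate, and all Sylow $p$-subgroups are conjugate; a cyclic subgroup of a given order $m$ is then the unique subgroup of order $m$ inside whichever torus or Sylow $p$-subgroup contains it, so conjugacy of the containing subgroups gives conjugacy of the cyclic subgroups of order $m$.

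For part (2), specialize to $q = p$ odd, so $e = 2$ and the three cyclic orders are $(p-1)/2$, $p$, and $(p+1)/2$. An element of order $2$ in $\PSL(2,p)$ comes from an element of order $4$ in $\SL(2,p)$ (the central involution is killed in the quotient), equivalently from the image of $\mathrm{diag}(i,-i)$ or its non-split analogue; since exactly one of $(p-1)/2$ and $(p+1)/2$ is even, all involutions lie in conjugate cyclic groups of that one even order and hence, being the unique involution in each such cyclic group, form a single conjugacy class. For order $3$: $3 \mid (p-1)/2$ or $3 \mid (p+1)/2$ (exactly one, unless $p = 3$, which is excluded by $q \geq 4$... actually $p=3$ gives $q=3<4$ so it's fine, and if $p \equiv 0 \pmod 3$ then $p = 3$), so again all order-$3$ elements sit inside conjugate cyclic subgroups of a fixed order containing a unique subgroup of order $3$, giving one class. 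For order $4$: an element of order $4$ must lie in the torus whose order is divisible by $4$; if neither $(p-1)/2$ nor $(p+1)/2$ is divisible by $4$ there are no such elements, and otherwise exactly one is, and that cyclic torus contains a unique subgroup of order $4$, whose two generators may or may not be fused — in any case there is at most one conjugacy class.

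The main obstacle, and the part I would be most careful about, is the conjugacy bookkeeping: asserting that "all cyclic subgroups of a given order are conjugate" requires knowing not just that the maximal tori come in two conjugacy classes but that a cyclic subgroup of non-maximal order is still conjugate to any other of the same order, which needs the observation that such a subgroup lies in a \emph{unique} maximal torus (its centralizer) together with the fact that two cyclic subgroups of the same order inside a fixed cyclic group are equal. I would also need to handle the edge cases where $p$ divides $6$ or where $(p \pm 1)/2$ fails to absorb the prime $2$ or $3$; these are quickly disposed of by the hypothesis $q \geq 4$ and by checking that for odd $p \geq 5$ exactly one of $p-1$, $p+1$ is divisible by $4$ and exactly one of them (or $p$ itself, only if $p=3$) is divisible by $3$. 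Since the statement only claims "at most one" class of order-$4$ elements, no fusion computation is actually required, which keeps the argument short. Everything here is classical and can be cited to \cite{Dickson_1958}, so the proof will be a brief assembly of these facts rather than a computation.
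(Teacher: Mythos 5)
The paper itself gives no proof of this lemma: it is recorded as a collection of well-known facts with a citation to \cite{Dickson_1958}, so your closing remark that everything ``can be cited to Dickson'' already matches what the paper does. Judged as a proof sketch, however, your argument has concrete problems. First, your justification that elements of $p$-power order have order exactly $p$ --- that ``$p \parallel |\PSL(2,q)|$'' --- is false whenever $d>1$, since $p^d$ divides $|\PSL(2,q)|$; the correct reason is that Sylow $p$-subgroups are elementary abelian (isomorphic to the additive group of $\F_q$). Second, your exclusion of elements whose order is divisible by $p$ and by another prime is circular: such an element would of course generate a cyclic subgroup, so ``impossible for a genuinely cyclic subgroup'' rules out nothing. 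The argument you need is that the centralizer in $\PSL(2,q)$ of an element of order $p$ is a $p$-group (the image of $\{\pm I\}\,U$ for a Sylow $p$-subgroup $U$ of $\SL(2,q)$), so no nontrivial element of order coprime to $p$ commutes with a unipotent element.

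Third, and most seriously, the conjugacy bookkeeping breaks precisely at the case you wave through: cyclic subgroups of order $p$ when $d>1$. There the Sylow $p$-subgroup is elementary abelian of order $p^d$, not cyclic, so ``the unique subgroup of order $m$ inside whichever torus or Sylow $p$-subgroup contains it'' simply does not apply, and the blanket claim is in fact false in that range: $\PSL(2,9)\cong A_6$ has two conjugacy classes of subgroups of order $3$ (generated by a $3$-cycle versus by a product of two disjoint $3$-cycles). Your uniqueness-inside-a-cyclic-torus argument, combined with $\gcd\bigl((q-1)/e,(q+1)/e\bigr)=1$, does establish conjugacy for cyclic subgroups of order coprime to $p$, and that (together with the case $q=p$, where subgroups of order $p$ are Sylow) is the only way the statement is used later, e.g.\ in Proposition \ref{prop:PSLmaxgood} and Lemma \ref{lem:powerupPSLq}; so you should either restrict the conjugacy assertion accordingly or handle the $p$-part separately. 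Your treatment of part (2) is fine as it stands, since there $d=1$ and the elements of order $2$, $3$, $4$ land in the cyclic subgroups of orders $(p\pm1)/2$ exactly as you describe.
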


Because of Lemma \ref{lem:PSLcyclic}(1), when checking whether a subgroup of $\PSL(2,q)$ is \wexp, it is enough to focus on elements of order $p$, $(q-1)/e$, or $(q+1)/e$.

The following result applies for all prime powers $q$ such that $q \ge 4$, $q \neq 5, 7, 9, 11$. At in Theorem \ref{thm:maxPSL}, we omit these cases due to the exceptional behavior of maximal subgroups.

\begin{prop}
 \label{prop:PSLmaxgood}
 Let $p$ be prime, $d \ge 1$, $q = p^d \ge 4$, $q \neq 5, 7, 9, 11$, and let $e = \gcd(p-1, 2)$.  If $G = \PSL(2,q)$ and $M \le G$ is isomorphic to one of $C_p^d \rtimes C_{(q-1)/e}$, $D_{(q-1)/e}$, or $D_{(q+1)/e}$, then $M \lwexp G$.
\end{prop}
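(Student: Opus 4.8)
The plan is to reduce the statement to a short finite check using the maximal subgroup data of Theorem~\ref{thm:maxPSL} together with Lemma~\ref{lem:PSLcyclic}(1). Write $e=\gcd(p-1,2)$ and $G=\PSL(2,q)$. By Lemma~\ref{lem:PSLcyclic}(1), every $x\in G$ lies in a cyclic subgroup that is $G$-conjugate to exactly one of $C_p$, $C_{(q-1)/e}$, or $C_{(q+1)/e}$; I will call these the \emph{unipotent}, \emph{split}, and \emph{nonsplit} elements, and I will handle each type against each of the three possible isomorphism types for $M$. Since the property ``$M\lwexp G$'' depends only on the $G$-conjugacy class of $M$ (the index $|G:M|$ is a class invariant, and conjugating $x$ by a given element moves the witness $M^g$ correspondingly), and since each row of Table~\ref{tbl:PSLmax1} forms a single conjugacy class, I may take $M$ to be whichever concrete representative is convenient.

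First I record the structural containments I need. The Borel subgroup $C_p^d\rtimes C_{(q-1)/e}$ contains a full Sylow $p$-subgroup of $G$ (its normal part, of order $q$) and also contains a split torus $C_{(q-1)/e}$ (its complement). The dihedral group $D_{(q-1)/e}$ contains its rotation subgroup $C_{(q-1)/e}$, a split torus; likewise $D_{(q+1)/e}$ contains a nonsplit torus $C_{(q+1)/e}$. Combining these with the conjugacy of all cyclic subgroups of a fixed order and of all Sylow $p$-subgroups (Lemma~\ref{lem:PSLcyclic}(1) and Sylow's theorem), I conclude: a unipotent element lies in a conjugate of $M$ whenever $M$ contains a Sylow $p$-subgroup (so one may even invoke Lemma~\ref{lem:sledgehammer} here); a split element lies in a conjugate of $M$ whenever $M$ is a Borel or is isomorphic to $D_{(q-1)/e}$; and a nonsplit element lies in a conjugate of $M$ whenever $M\cong D_{(q+1)/e}$. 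In every such case $x$, and hence $x^{|G:M|}$, already lies in a conjugate of $M$, so nothing more is needed.

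It remains to treat, for each choice of $M$, the ``wrong'' element types, and for those I will simply show $x^{|G:M|}=1$ because $|x|$ divides $|G:M|$. From Table~\ref{tbl:PSLmax1}: for a Borel, $|G:M|=q+1$, and a nonsplit element has order dividing $(q+1)/e$, which divides $q+1$. For $M\cong D_{(q-1)/e}$, $|G:M|=q(q+1)/e$, which is divisible by $p$ (as $p\mid q$), killing unipotent elements, and by $(q+1)/e$, killing nonsplit elements. For $M\cong D_{(q+1)/e}$, $|G:M|=q(q-1)/e$ is divisible by $p$ and by $(q-1)/e$, killing unipotent and split elements respectively. Running through the three element types for each of the three subgroup types, every case is covered either by the containment argument of the previous paragraph or by one of these divisibilities, which gives $M\lwexp G$.

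The proposition is not deep once the classification is in hand; the two places needing a moment's care are (a) confirming that each relevant $M$ genuinely contains a cyclic subgroup of the appropriate torus order --- immediate from the explicit semidirect-product and dihedral structures --- and (b) checking the small collection of divisibilities ($p\mid q$, $(q\mp1)/e\mid q\mp1$, and that these divide the listed indices), which go through uniformly in both the even case ($e=1$) and the odd case ($e=2$). Beyond this bookkeeping I anticipate no genuine obstacle.
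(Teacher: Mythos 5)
Your proposal is correct and follows essentially the same route as the paper's proof: reduce via Lemma~\ref{lem:PSLcyclic}(1) to elements of order $p$, $(q-1)/e$, or $(q+1)/e$, observe that the matching torus/Sylow type is already contained in (a conjugate of) $M$, and dispose of the remaining element types by noting their orders divide the indices listed in Table~\ref{tbl:PSLmax1}, so the relevant power is trivial. The only differences are expository (naming the element types and spelling out the conjugacy-class reduction), not mathematical.
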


\begin{proof}
 By Lemma \ref{lem:PSLcyclic}, every element of $G$ is contained in a cyclic subgroup isomorphic to $C_{(q-1)/e}$, $C_p$, or $C_{(q+1)/e}$, so we need only check elements $x$ with orders $p$, $(q-1)/e$, or $(q+1)/e$ in $G$ for each $M$.  
 
 Suppose first that $|x| = p$.  If $M \cong C_p^d \rtimes C_{(p-1)/e}$, then $x$ is in a conjugate of $M$, and we are done.  Otherwise, $|G:M|$ is divisible by $p$, and so the result holds in this case.
 
 Next, suppose $|x| = (q-1)/e$.  If $M \cong C_p^d \rtimes C_{(p-1)/e}$ or $M \cong D_{(q-1)/e}$, then $x$ is in a conjugate of $M$, and we are done.  Otherwise, $|G:M|$ is divisible by $(q-1)/e$, and so the result holds in this case.
 
 Finally, suppose $|x| = (q+1)/e$.  If $M \cong D_{(q+1)/e}$, then $x$ is in a conjugate of $M$, and we are done.  Otherwise, $|G:M|$ is divisible by $(q+1)/e$, proving the result.
\end{proof}

\subsection{The primes for which $\PSL(2,p)$ is \wic}

The purpose of this section is to prove Theorem \ref{thm:PSLwic} in the case when $q = p$.  We will need a few preliminary results to do so.

\begin{prop}
 \label{prop:PSLwicA4S4A5}
 Let $p \ge 13$ be prime.  The group $G = \PSL(2,p)$ is \wic{} if and only if the maximal subgroups isomorphic to one of $A_4$, $S_4$, or $A_5$ are \wexp{} with respect to $G$. 
\end{prop}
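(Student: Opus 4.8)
The plan is to combine the ``reduction to maximal subgroups'' machinery of Section \ref{sect:prelims} with the structural information about $\PSL(2,p)$ recorded in Theorem \ref{thm:maxPSL} and Proposition \ref{prop:PSLmaxgood}. First I would observe that, by Lemma \ref{lem:maxPSLwic}, every maximal subgroup of $G = \PSL(2,p)$ is itself a \wic{} group (each is either solvable or isomorphic to $A_5$). Hence Lemma \ref{lem:checkmax} applies and tells us that $G$ is \wic{} if and only if every maximal subgroup of $G$ is \wexp{} with respect to $G$.

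Next I would enumerate the maximal subgroups of $G$ using Theorem \ref{thm:maxPSL} (here $q = p$, so $d = 1$, and the hypothesis $p \ge 13$ excludes the exceptional values $q = 5,7,9,11$): up to conjugacy they consist of the three ``generic'' families $C_p \rtimes C_{(p-1)/e}$, $D_{(p-1)/e}$, and $D_{(p+1)/e}$ from Table \ref{tbl:PSLmax1}, together with at most one additional conjugacy class of subgroups isomorphic to $A_4$, $S_4$, or $A_5$ from Table \ref{tbl:PSLmax2}. By Proposition \ref{prop:PSLmaxgood} (which applies since $q = p \ge 13$, so $q \neq 5,7,9,11$), each of the three generic families is automatically \wexp{} with respect to $G$. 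Therefore the only maximal subgroups whose \wexp-ness remains to be decided are those isomorphic to $A_4$, $S_4$, or $A_5$.

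Combining these observations gives both implications. If $G$ is \wic{}, then in particular every maximal subgroup of $G$---including any isomorphic to $A_4$, $S_4$, or $A_5$---is \wexp{} with respect to $G$. Conversely, if the maximal subgroups isomorphic to $A_4$, $S_4$, or $A_5$ are \wexp{} with respect to $G$, then by the previous paragraph \emph{all} maximal subgroups of $G$ are \wexp{} with respect to $G$; since all of them are \wic{}, Lemma \ref{lem:checkmax} then yields that $G$ is \wic{}.

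There is essentially no hard step here: once Theorem \ref{thm:maxPSL}, Lemma \ref{lem:maxPSLwic}, Lemma \ref{lem:checkmax}, and Proposition \ref{prop:PSLmaxgood} are in hand, the argument is pure bookkeeping. The only point requiring a little care is making sure the list of maximal subgroups is complete---that the hypothesis $p \ge 13$ legitimately excludes the small exceptional primes $p = 5, 7, 11$ (and the prime power $q = 9$) for which Theorem \ref{thm:maxPSL} is not stated---and confirming that $A_4$, $S_4$, and $A_5$ are the only ``extra'' maximal-subgroup types that can occur for prime $q$, so that nothing outside these three isomorphism types is left unaddressed.
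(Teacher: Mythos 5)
Your proposal is correct and follows essentially the same route as the paper: Lemma \ref{lem:maxPSLwic} plus Lemma \ref{lem:checkmax} reduce the question to maximal subgroups, and Theorem \ref{thm:maxPSL} together with Proposition \ref{prop:PSLmaxgood} shows only the $A_4$, $S_4$, $A_5$ classes need checking. Your version simply spells out the bookkeeping the paper leaves implicit.
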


\begin{proof}
 By Lemma \ref{lem:maxPSLwic}, every maximal subgroup of $G$ is \wic, so, by Lemma \ref{lem:checkmax}, $G$ is \wic{} if and only if all maximal subgroups are \wexp{} with respect to $G$.  That we need only check subgroups isomorphic to $A_4$, $A_5$, or $S_4$ follows from Theorem \ref{thm:maxPSL} and Proposition \ref{prop:PSLmaxgood}. 
\end{proof}

\begin{lem}\label{lem:chainsaw}
Let $p \ge 13$ be prime, let $G = \PSL(2,p)$, and let $M$ be a maximal subgroup of $G$ isomorphic to one of $A_5$, $A_4$, or $S_4$.
\begin{enumerate}[(1)]
\item If $a \in G$ and $|a|=p$, then $a^{|G:M|} = 1$.
\item There exist unique even integers $k$ and $\ell$ such that
\begin{enumerate}[(i)]
\item $2|M|=k\ell$, and
\item $(p-1)/k$ and $(p+1)/\ell$ are coprime integers.
\end{enumerate}
\item With $k$ and $\ell$ as in part (2), let $x, y \in G$ such that $|x|=(p-1)/2$ and $|y|=(p+1)/2$. Then, $|x^{|G:M|}| = k/2$ and $|y^{|G:M|}| = \ell/2$.
\end{enumerate}
\end{lem}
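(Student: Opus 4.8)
The plan is to reduce the entire lemma to elementary arithmetic of $|G| = \tfrac{1}{2}p(p-1)(p+1)$ and of $|M|$, which equals one of $12$, $24$, $60$. Since $p \ge 13$ is prime, $\gcd(|M|,p) = 1$, and since $p \nmid (p-1)(p+1)$ the prime $p$ divides $|G|$ exactly once; hence $|M|$ divides $\tfrac{1}{2}(p-1)(p+1)$, so $2|M| \mid (p-1)(p+1)$ and
\[
 |G:M| \;=\; \frac{|G|}{|M|} \;=\; p\cdot\frac{(p-1)(p+1)}{2|M|}
\]
is an integer divisible by $p$. Part~(1) is then immediate: if $|a| = p$, then $p \mid |G:M|$ forces $a^{|G:M|} = 1$. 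For the rest I would also record that $\gcd(p-1,p+1) = 2$, so exactly one of $p-1,p+1$ is divisible by $4$ while the other is divisible by exactly $2$; that $v_2(|M|) \ge 2$ for each of $A_4, S_4, A_5$ (here $v_r$ denotes the exponent of the prime $r$ in a factorization); and that elements $x, y$ of the stated orders exist by Lemma~\ref{lem:PSLcyclic}(1).

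For part~(2) I would build $k$ and $\ell$ by distributing the prime-power divisors of $2|M|$ between $p-1$ and $p+1$; note that the prime divisors of $2|M|$ all lie in $\{2,3,5\}$. An odd prime $r \mid 2|M|$ divides exactly one of $p-1,p+1$ (it cannot divide both, since $\gcd(p-1,p+1) = 2$ is coprime to $r$), and because $r^{v_r(2|M|)} \mid (p-1)(p+1)$ the entire factor $r^{v_r(2|M|)}$ divides that same one; assign it to $k$ if $r \mid p-1$ and to $\ell$ otherwise. For $r = 2$, set $e \colonequals v_2(2|M|) = v_2(|M|)+1 \ge 3$ and assign $2^{e-1}$ to the one of $k,\ell$ corresponding to whichever of $p-1,p+1$ is divisible by $4$, and $2^{1}$ to the other. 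The inclusion $2|M| \mid (p-1)(p+1)$ gives $e \le v_2(p-1)+v_2(p+1) = \max\{v_2(p-1),v_2(p+1)\}+1$, hence $e-1 \le \max\{v_2(p-1),v_2(p+1)\}$; this is exactly the estimate needed to secure $k \mid p-1$ and $\ell \mid p+1$, and $k,\ell$ are even because $e-1 \ge 2$. By construction $k\ell = 2|M|$. Finally $(p-1)/k$ and $(p+1)/\ell$ are coprime: their gcd divides $\gcd(p-1,p+1) = 2$, and whichever of $p-1,p+1$ has $v_2$ equal to $1$ forces the corresponding (even) divisor $k$ or $\ell$ to have $v_2 = 1$, so that quotient is odd.

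Uniqueness is then a short valuation count, valid for any even $k',\ell'$ with $k'\ell' = 2|M|$, $k' \mid p-1$, $\ell' \mid p+1$: for each odd prime $r \mid 2|M|$, $r$ divides only one of $p-1,p+1$, which forces $v_r$ of one of $k',\ell'$ to equal $v_r(2|M|)$ and the other to equal $0$; and for $r = 2$, evenness gives $v_2(k'),v_2(\ell') \ge 1$ with $v_2(k')+v_2(\ell') = e$, while one of the bounds $v_2(k') \le v_2(p-1)$, $v_2(\ell') \le v_2(p+1)$ equals $1$, pinning down both $2$-parts. Hence $(k',\ell') = (k,\ell)$. For part~(3) I would use $|x^{m}| = |x|/\gcd(|x|,m)$ with $m = |G:M|$, after rewriting $|G:M| = p\cdot\tfrac{p-1}{k}\cdot\tfrac{p+1}{\ell}$ as a product of three pairwise coprime integers (each is coprime to $p$, and $\tfrac{p-1}{k}$, $\tfrac{p+1}{\ell}$ are coprime by part~(2)). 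Because the gcd of a fixed integer with a product of pairwise coprime factors is the product of the individual gcds,
\[
 \gcd\!\Big(\tfrac{p-1}{2},\,|G:M|\Big) \;=\; \gcd\!\Big(\tfrac{p-1}{2},\,p\Big)\cdot\gcd\!\Big(\tfrac{p-1}{2},\,\tfrac{p-1}{k}\Big)\cdot\gcd\!\Big(\tfrac{p-1}{2},\,\tfrac{p+1}{\ell}\Big).
\]
The first factor is $1$; the second equals $\tfrac{p-1}{k}$ because $\tfrac{p-1}{k}$ divides $\tfrac{p-1}{2}$ (as $2 \mid k$); the third is $1$, since it divides $\gcd(p-1,p+1) = 2$ yet cannot equal $2$ --- that would force both $\tfrac{p-1}{2}$ and $\tfrac{p+1}{\ell}$ to be even, hence $4 \mid p-1$ and (as $\ell$ is even with $\tfrac{p+1}{\ell}$ an integer) $4 \mid p+1$, contradicting $\gcd(p-1,p+1) = 2$. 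Therefore $\gcd(\tfrac{p-1}{2},|G:M|) = \tfrac{p-1}{k}$ and $|x^{|G:M|}| = \frac{(p-1)/2}{(p-1)/k} = k/2$; the computation of $|y^{|G:M|}| = \ell/2$ is the mirror image, with $p+1$ and $\ell$ in the roles of $p-1$ and $k$.

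None of this is deep. The one place that demands real care --- and thus the main obstacle --- is the $2$-adic bookkeeping in part~(2): one must simultaneously guarantee $k \mid p-1$, $\ell \mid p+1$, the evenness of both, and the coprimality of the quotients, all of which rest on the single inequality $v_2(2|M|) \le v_2(p-1)+v_2(p+1)$ together with $\min\{v_2(p-1),v_2(p+1)\} = 1$. It is also worth flagging that part~(3) uses only that $k$ and $\ell$ are even, which is why evenness is built into the statement of part~(2).
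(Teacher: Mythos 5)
Your proof is correct and follows essentially the same route as the paper: part (1) via $p \mid |G:M|$, part (2) by splitting the prime powers of $2|M|$ between $p-1$ and $p+1$ (your valuation-by-valuation assignment produces exactly the paper's $k = 2\gcd(p-1,m)$, $\ell = 2^{e-1}\gcd(p+1,m)$), and part (3) via the factorization $|G:M| = \tfrac{p-1}{k}\cdot p\cdot\tfrac{p+1}{\ell}$ into pairwise coprime factors and the order formula $|x^m| = |x|/\gcd(|x|,m)$. If anything, you verify the divisibility, coprimality, and uniqueness claims in more detail than the paper does.
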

\begin{proof}
Note that $|G:M| = (p-1)p(p+1)/(2|M|)$ and the only primes that could divide $|M|$ are 2, 3, and 5.
\begin{enumerate}[(1)]
\item Since $p \ge 13$, $p$ is coprime to $2|M|$, $p-1$, and $p+1$. Thus, $\gcd(p, |G:M|)=p$, and so $a^{|G:M|}=1$ whenever $|a|=p$.
\item Certainly, $(p-1)(p+1)/(2|M|)$ is an integer, and the integers $p-1$ and $p+1$ have no odd prime factors in common. Moreover, exactly one of $p-1$ or $p+1$ is congruent to $2 \pmod 4$, and the other is congruent to $0 \pmod 4$. Without loss of generality, assume that $p-1 \equiv 2 \pmod 4$. Factor $2|M|$ as $2|M| = 2^e m$, where $e \geq 3$ and $m$ is odd. Take $k=2\gcd(p-1,m)$ and $\ell = 2^{e-1}\gcd(p+1,m)$. Then, $k$ and $\ell$ have all of the required properties. Note that these integers are unique, since $k$ must be congruent to $2 \pmod 4$, $\ell$ must be congruent to $0 \pmod 4$, and $p-1$ and $p+1$ share no odd prime factors.
\item We may factor $|G:M|$ as
\begin{equation*}
|G:M| = \dfrac{p-1}{k} \cdot p \cdot \dfrac{p+1}{\ell},
\end{equation*}
where $(p-1)/k$ and $(p+1)/\ell$ are coprime. Then,
\begin{equation*}
\gcd\Big(\dfrac{p-1}{2}, \; |G:M|\Big) = \dfrac{p-1}{k} \quad \text{and} \quad \gcd\Big(\dfrac{p+1}{2}, \; |G:M|\Big) = \dfrac{p+1}{\ell},
\end{equation*}
which implies that $|x^{|G:M|}| = k/2$ and $|y^{|G:M|}| = \ell/2$. \qedhere
\end{enumerate}
\end{proof}

\begin{lem}
 \label{lem:A5inPSLbad}
 Let $p \ge 13$ be prime.  If $p \equiv \pm 1 \pmod {10}$, $G = \PSL(2,p)$, and $M \cong A_5$ is a subgroup of $G$, then there exists an element $x \in G$ of order $(p-1)/2$ or $(p+1)/2$ such that the order of $x^{|G:M|}$ is at least $6$.  In particular, $M$ is not \wexp{} with respect to  $G$, and $G$ is \indexc.
\end{lem}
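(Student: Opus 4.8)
The plan is to run the bookkeeping lemma, Lemma~\ref{lem:chainsaw}, against $M\cong A_5$ and produce a power that is simply too large to live inside any copy of $A_5$. Since $p\equiv\pm1\pmod{10}$ and $p\ge13$, Theorem~\ref{thm:maxPSL} tells us that $M$ is a maximal subgroup of $G$, so Lemma~\ref{lem:chainsaw} applies with $|M|=60$, hence $2|M|=120$. Part~(2) of that lemma yields (unique) even integers $k,\ell$ with $k\ell=120$ and with $(p-1)/k$ and $(p+1)/\ell$ coprime integers. The one genuinely new observation is arithmetic: if both $k\le10$ and $\ell\le10$, then $k\ell\le100<120$, a contradiction; since the larger of $k,\ell$ is an even integer exceeding $\sqrt{120}>10$, we conclude $\max\{k,\ell\}\ge12$.

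Next I would exhibit the element. By Lemma~\ref{lem:PSLcyclic}(1) (with $e=2$, as $p$ is odd), $G$ contains an element $x$ of order $(p-1)/2$ and an element $y$ of order $(p+1)/2$, and by Lemma~\ref{lem:chainsaw}(3) we have $|x^{|G:M|}|=k/2$ and $|y^{|G:M|}|=\ell/2$. Put $z=x$ if $k\ge12$ and $z=y$ otherwise; in either case $z$ has order $(p-1)/2$ or $(p+1)/2$, and $|z^{|G:M|}|\ge6$, which is exactly the element claimed in the statement.

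To finish, I would argue by conjugacy that this rules out weak exponentiality. Every conjugate $M^g$ of $M$ in $G$ is isomorphic to $A_5$, and the orders of elements of $A_5$ are exactly $1,2,3,5$; in particular no conjugate of $M$ contains an element of order at least $6$. Since $|z^{|G:M|}|\ge6$, the element $z^{|G:M|}$ lies in no conjugate of $M$, so $M$ is not \wexp{} with respect to $G$. Hence $G$ is not \wic{}, i.e.\ $G$ is \indexc{}, completing the proof.

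I do not expect a serious obstacle: Lemmas~\ref{lem:chainsaw} and~\ref{lem:PSLcyclic} carry the structural weight, and the only real content is the inequality $\max\{k,\ell\}\ge12$ together with the fact that $A_5$ has no element of order $\ge6$. The one point to handle with a little care is that we do not know a priori which of $(p-1)/2$ or $(p+1)/2$ produces the large value; but since it suffices to name a \emph{single} bad element, the short case split on whether $k\ge12$ or $\ell\ge12$ disposes of this cleanly.
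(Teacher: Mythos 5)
Your proof is correct, and it leans on the same two pillars as the paper's argument---Lemma~\ref{lem:chainsaw} to compute $|x^{|G:M|}|=k/2$ and $|y^{|G:M|}|=\ell/2$, and Lemma~\ref{lem:PSLcyclic} plus the fact that $A_5$ has no elements of order exceeding $5$---but you finish differently. The paper splits into cases according to $p \bmod 12$ and $p\equiv \pm 1 \pmod{10}$ and tabulates the exact values of $k$ and $\ell$ (Tables~\ref{tbl:A5_1} and \ref{tbl:A5_9}), observing that in every case one of the resulting orders is $6$, $10$, $15$, or $30$. You instead bypass all congruence bookkeeping with a pigeonhole inequality: since $k\ell=2|M|=120$ with $k,\ell$ even, the larger of the two is an even integer greater than $10$, hence at least $12$, so one of the two powers has order at least $6$ and cannot lie in any conjugate of $M\cong A_5$. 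Your route is shorter and more uniform (it would work verbatim for any subgroup of order $60$ whose element orders are bounded by $5$), at the cost of not recording the precise orders, which the paper's tables make available for comparison with the parallel $A_4$ and $S_4$ analyses in Lemmas~\ref{lem:whenA4good} and \ref{lem:whenS4good}. One small point, which the paper shares: Lemma~\ref{lem:chainsaw} is stated for \emph{maximal} $M$, so, as you do, one should note that for $p\equiv\pm1\pmod{10}$, $p\ge 13$, every $A_5$-subgroup of $\PSL(2,p)$ is maximal (it cannot sit inside any of the solvable maximal subgroups or inside $S_4$), though in fact the arithmetic of Lemma~\ref{lem:chainsaw} only uses $|M|=60$.
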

\begin{proof}
Assume $p \equiv \pm 1 \pmod{10}$. By Lemma \ref{lem:PSLcyclic}, it suffices to consider elements of $G$ of order $p$, $(p-1)/2$, and $(p+1)/2$. From Lemma \ref{lem:chainsaw}, we know that $a^{|G:M|} \in M$ whenever $|a|=p$. For elements of other orders, we will consider cases based on the value of $p \pmod{12}$. The tables below summarize the possibilities; the notation is as in Lemma \ref{lem:chainsaw}.
\begin{center}
\begin{minipage}{0.5\textwidth}
\begin{table}[H]
\begin{tabular}{ccccc}
$p \pmod{12}$ & $k$ & $\ell$ & $|x^{|G:M|}|$ & $|y^{|G:M|}|$\\
\hline
1  & 60 &  2 & 30 & 1\\
5  & 20 &  6 & 10 & 3\\
7  & 30 &  4 & 15 & 2\\
11 & 10 & 12 &  5 & 6
\end{tabular}
\caption{$M \cong A_5$, $p \equiv 1 \pmod{10}$}
\label{tbl:A5_1}
\end{table}
\end{minipage}%
\begin{minipage}{0.5\textwidth}
\begin{table}[H]
\begin{tabular}{ccccc}
$p \pmod{12}$ & $k$ & $\ell$ & $|x^{|G:M|}|$ & $|y^{|G:M|}|$\\
\hline
1  & 12 & 10 & 6 &  5\\
5  &  4 & 30 & 2 & 15\\
7  &  6 & 20 & 3 & 10\\
11 &  2 & 60 & 1 & 30
\end{tabular}
\caption{$M \cong A_5$, $p \equiv 9 \pmod{10}$}
\label{tbl:A5_9}
\end{table}
\end{minipage}
\end{center}
Since $A_5$ contains no elements of order 6, 15, 10, or 30, we see that in each case either $x^{|G:M|}$ or $y^{|G:M|}$ fails to be in any conjugate of $M$. Therefore, $M$ is not \wexp{} with respect to $G$, and $G$ is \indexc.
\end{proof}

\begin{lem}
 \label{lem:whenA4good}
 Let $p \ge 13$ be prime.  Let $p \equiv \pm 3 \pmod {10}$ and $p \equiv \pm 3 \pmod 8$.  Then, a maximal subgroup $M \cong A_4$ is \wexp{} with respect to $G = \PSL(2,p)$ if and only if $p \equiv \pm 5 \pmod {24}$. In particular, if $p \not\equiv \pm 5 \pmod {24}$, then there exists an element $x \in G$ of order $(p-1)/2$ or $(p+1)/2$ such that the order of $x^{|G:M|}$ is $6$, and a maximal subgroup $M \cong A_4$ is \wexp{} with respect to $G$ if and only if
 \[ p \equiv 43, 53, 67, 77 \pmod {120}.\]
\end{lem}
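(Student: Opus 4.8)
The plan is to combine Lemma~\ref{lem:PSLcyclic} (to cut down to elements of order $p$, $(p-1)/2$, and $(p+1)/2$) with Lemma~\ref{lem:chainsaw} applied to $M\cong A_4$, where $|M|=12$ so that $2|M|=24=2^{3}\cdot 3$, and then to exploit the facts that $A_4$ has elements only of orders $1,2,3$ and that $\PSL(2,p)$ has exactly one conjugacy class of elements of order $2$ and one of order $3$.

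First I would assemble the reductions. For $x$ of order $p$, Lemma~\ref{lem:chainsaw}(1) gives $x^{|G:M|}=1\in M$. For any other $z\in G$, by Lemma~\ref{lem:PSLcyclic}(1) $z$ lies in a cyclic subgroup $C$ of order $(p-1)/2$ or $(p+1)/2$; since $C$ is cyclic, $z^{|G:M|}$ is a power of $c^{|G:M|}$ for a generator $c$ of $C$, so by Lemma~\ref{lem:chainsaw}(3) its order divides $k/2$ (when $|C|=(p-1)/2$) or $\ell/2$ (when $|C|=(p+1)/2$), with $k,\ell$ as in Lemma~\ref{lem:chainsaw}(2). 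Hence $M\lwexp G$ if and only if every element of $G$ whose order divides $k/2$ or $\ell/2$ lies in some conjugate of $M$. Because $M\cong A_4$ contains elements of orders $1,2,3$ and no others, and because there is a unique $G$-class of elements of order $2$ and a unique one of order $3$ by Lemma~\ref{lem:PSLcyclic}(2), this holds exactly when $k/2,\ell/2\in\{1,2,3\}$; and if, say, $k/2\notin\{1,2,3\}$, then Lemma~\ref{lem:chainsaw}(3) already exhibits an $x$ of order $(p-1)/2$ with $x^{|G:M|}$ of that order, and this element then lies in no conjugate of $M$ (note such an $x$ exists since $p\ge 13$ makes $C_{(p-1)/2}$ and $C_{(p+1)/2}$ nontrivial).

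The core computation is to determine $k$ and $\ell$. Since $2|M|=2^{3}\cdot 3$, I would compare $2$-adic and $3$-adic valuations with those of $p-1$ and $p+1$: the hypothesis $p\equiv\pm3\pmod 8$ gives $v_2(p-1)+v_2(p+1)=3=v_2(2|M|)$, which forces $v_2(k)=v_2(p-1)$ and $v_2(\ell)=v_2(p+1)$, while exactly one of $p\pm1$ is divisible by $3$. Running through the four cases given by $p\bmod 8\in\{3,5\}$ and $p\bmod 3\in\{1,2\}$, one obtains $(k,\ell)\in\{(6,4),(4,6)\}$ — so $\{k/2,\ell/2\}\subseteq\{1,2,3\}$ and $M\lwexp G$ — precisely when $(p\bmod 8,p\bmod 3)\in\{(3,1),(5,2)\}$, and $(k,\ell)\in\{(12,2),(2,12)\}$ — so one of $k/2,\ell/2$ equals $6$ and $M\not\lwexp G$, witnessed by the promised element of order $6$ — in the two remaining cases. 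Reducing these congruences, the two good cases are exactly $p\equiv 19\pmod{24}$ and $p\equiv 5\pmod{24}$, i.e. $p\equiv\pm5\pmod{24}$, which is the first assertion. For the "in particular" statement I would intersect $p\equiv\pm5\pmod{24}$ with the standing hypotheses $p\equiv\pm3\pmod 8$ and $p\equiv\pm3\pmod{10}$ (equivalently $p\equiv 2,3\pmod 5$) — the conditions, by Table~\ref{tbl:PSLmax2}, under which a maximal $M\cong A_4$ exists in $\PSL(2,p)$ — and reduce modulo $120$ via the Chinese Remainder Theorem to get $p\equiv 43,53,67,77\pmod{120}$. The only real difficulty here is bookkeeping: keeping the valuation argument that pins down $(k,\ell)$ airtight, and making sure the $p\equiv\pm3\pmod 8$ hypothesis — which is exactly what excludes the genuinely different $S_4$ behaviour — is invoked precisely where it is needed.
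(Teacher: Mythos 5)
Your proposal is correct and follows essentially the same route as the paper's proof: reduce to elements of order $p$, $(p-1)/2$, $(p+1)/2$ via Lemmas \ref{lem:PSLcyclic} and \ref{lem:chainsaw}, compute $(k,\ell)$ in the four cases determined by $p \bmod 8$ and $p \bmod 3$, and use that $A_4$ has no element of order $6$ while the unique $G$-classes of elements of order $2$ and $3$ meet $A_4$. The only cosmetic difference is that you pin down $(k,\ell)$ by a $2$-adic/$3$-adic valuation argument where the paper simply tabulates the four cases, and you spell out the reduction to generators of the cyclic subgroups, which the paper leaves implicit.
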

\begin{proof}
As in Lemma \ref{lem:A5inPSLbad}, it suffices to consider elements of $G$ of order $(p-1)/2$ or $(p+1)/2$. We will consider cases depending on the value of $p \pmod{3}$, and use the notation of Lemma \ref{lem:chainsaw}. Tables \ref{tbl:A4_3} and \ref{tbl:A4_5} list the possibilities.
\begin{center}
\begin{minipage}{0.5\textwidth}
\begin{table}[H]
\begin{tabular}{ccccc}
$p \pmod{3}$ & $k$ & $\ell$ & $|x^{|G:M|}|$ & $|y^{|G:M|}|$\\
\hline
1  & 6 &  4 & 3 & 2\\
2  & 2 & 12 & 1 & 6
\end{tabular}
\caption{$M \cong A_4$, $p \equiv 3 \pmod{8}$}
\label{tbl:A4_3}
\end{table}
\end{minipage}%
\begin{minipage}{0.5\textwidth}
\begin{table}[H]
\begin{tabular}{ccccc}
$p \pmod{3}$ & $k$ & $\ell$ & $|x^{|G:M|}|$ & $|y^{|G:M|}|$\\
\hline
1  & 12 & 2 & 6 & 1\\
2  &  4 & 6 & 2 & 3
\end{tabular}
\caption{$M \cong A_4$, $p \equiv 5 \pmod{8}$}
\label{tbl:A4_5}
\end{table}
\end{minipage}
\end{center}
Now, $A_4$ contains no element of order 6, so $M$ is not \wexp{} with respect to $G$ when either $p \equiv 3 \pmod 8$ and $p \equiv 2 \pmod 3$, or $p \equiv 5 \pmod 8$ and $p \equiv 1 \pmod 3$. In the other two cases, one of $x^{|G:M|}$ or $y^{|G:M|}$ equals 2, and the other equals 3. By Lemma \ref{lem:PSLcyclic}(2), all elements of order 2 are conjugate in $G$, and likewise for elements of order 3. Since $A_4$ contains both elements of order 2 and elements of order 3, this means that some conjugate of $M$ contains $x^{|G:M|}$, and some conjugate of $M$ contains $y^{|G:M|}$.

We conclude that if $p \equiv 3 \pmod 8$, then $M$ is \wexp{} with respect to $G$ whenever $p \equiv \pm 3 \pmod{10}$ and $p \equiv 1 \pmod 3$. This is equivalent to having $p \equiv 43, 67 \pmod{120}$. Similarly, if $p \equiv 5 \pmod 8$, then $M$ has the desired property when $p \equiv 53, 77 \pmod{120}$.
\end{proof}

\begin{lem}
 \label{lem:whenS4good}
 Let $p \ge 13$ be prime.  Let $p \equiv \pm 3 \pmod {10}$ and $p \equiv \pm 1 \pmod 8$. Then, a maximal subgroup $M \cong S_4$ is \wexp{} with respect to $G = \PSL(2,p)$ if and only if $p \equiv \pm 7 \pmod {24}$.  In particular, if $p \not\equiv \pm 7 \pmod {24}$, then there exists an element $x \in G$ of order $(p-1)/2$ or $(p+1)/2$ such that the order of $x^{|G:M|}$ is $12$, and a maximal subgroup $M \cong S_4$ is \wexp{} with respect to $G$ if and only if
 \[ p \equiv 7, 17, 103, 113 \pmod {120}.\]
\end{lem}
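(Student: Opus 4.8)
The plan is to run the argument of Lemma~\ref{lem:whenA4good} essentially verbatim, replacing $A_4$ (order $12$) with $S_4$ (order $24$). First I would invoke Lemma~\ref{lem:PSLcyclic}(1) to reduce to elements $x \in G$ of order $p$, $(p-1)/2$, or $(p+1)/2$, and Lemma~\ref{lem:chainsaw}(1) to dispose of the case $|x| = p$ (there $x^{|G:M|} = 1 \in M$). For the two remaining orders, Lemma~\ref{lem:chainsaw}(2),(3) supplies the unique even integers $k, \ell$ with $k\ell = 2|M| = 48 = 2^4 \cdot 3$, $k \mid p-1$, $\ell \mid p+1$, and $(p-1)/k$ coprime to $(p+1)/\ell$, and tells us that $|x^{|G:M|}| = k/2$ when $|x| = (p-1)/2$ and $|y^{|G:M|}| = \ell/2$ when $|y| = (p+1)/2$.

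Next I would compute $k$ and $\ell$ explicitly by casework on $p \bmod 8$ and $p \bmod 3$. Because $p \equiv \pm 1 \pmod 8$, exactly one of $p \mp 1$ is divisible by $8$ and the other is $\equiv 2 \pmod 4$, so one of $k, \ell$ lies in $\{2, 6\}$ and the other in $\{8, 24\}$; the factor $3$ of $48$ is forced onto whichever of $p \mp 1$ it divides. Substituting into the formulas $k = 2\gcd(p-1, 3)$, $\ell = 8\gcd(p+1, 3)$ (with $k$ and $\ell$ interchanged when $p \equiv 1 \pmod 8$) yields
\[
(k, \ell) =
\begin{cases}
(24, 2) & p \equiv 1 \pmod 8,\ p \equiv 1 \pmod 3,\\
(8, 6) & p \equiv 1 \pmod 8,\ p \equiv 2 \pmod 3,\\
(6, 8) & p \equiv 7 \pmod 8,\ p \equiv 1 \pmod 3,\\
(2, 24) & p \equiv 7 \pmod 8,\ p \equiv 2 \pmod 3,
\end{cases}
\]
so that $|x^{|G:M|}|$ and $|y^{|G:M|}|$ always lie in $\{1, 3, 4, 12\}$, and one of them equals $12$ precisely in the first and fourth rows.

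I would then finish as in Lemma~\ref{lem:whenA4good}: $S_4$ has elements of orders $1, 2, 3, 4$ but no element of order $12$ (and none of order $6$, though order $6$ never arises here), so in rows one and four there is an element of $G$ whose $|G:M|$-th power has order $12$ and hence lies in no conjugate of $M$, showing $M$ is not \wexp{}. In rows two and three all power-downs have order $1$, $3$, or $4$; by Lemma~\ref{lem:PSLcyclic}(2), $G$ has a single class of elements of order $3$ and at most one class of order $4$, and $S_4$ meets each such class, so every power-down is conjugate into $M$ and $M \lwexp G$. Thus $M$ is \wexp{} with respect to $G$ if and only if $p \equiv 7$ or $17 \pmod{24}$, i.e.\ $p \equiv \pm 7 \pmod{24}$; intersecting with the standing hypothesis $p \equiv \pm 3 \pmod{10}$ and applying the Chinese Remainder Theorem modulo $120$ produces the four residues $p \equiv 7, 17, 103, 113 \pmod{120}$.

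Most of this is bookkeeping. The step that genuinely differs from Lemma~\ref{lem:whenA4good} is the order-$4$ case: there $A_4$ has no element of order $4$, so an order-$4$ power-down cannot even occur, whereas here I must use Lemma~\ref{lem:PSLcyclic}(2) to see that there is at most one $\PSL(2,p)$-class of order-$4$ elements and that the maximal $S_4$ meets it. I also need to verify that no power-down has order $6$ (which would be fatal, as $S_4$ has no order-$6$ element); this holds because $k$ and $\ell$ cannot both be divisible by $4$, since $4 \nmid \gcd(p-1, p+1)$, ruling out the factorizations $(k, \ell) \in \{(12, 4), (4, 12)\}$ that would otherwise yield order $6$.
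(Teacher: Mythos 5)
Your proposal is correct and follows essentially the same route as the paper: reduce via Lemma~\ref{lem:PSLcyclic} and Lemma~\ref{lem:chainsaw} to the orders $(p\pm1)/2$, compute $(k,\ell)$ by casework on $p \bmod 8$ and $p \bmod 3$ (your four rows match the paper's Tables~\ref{tbl:S4_1} and~\ref{tbl:S4_7}), and conclude using that $S_4$ lacks elements of order $12$ while the single $G$-classes of order-$3$ and order-$4$ elements meet $S_4$, giving $p \equiv \pm 7 \pmod{24}$ and, after intersecting with $p \equiv \pm 3 \pmod{10}$, the residues $7, 17, 103, 113 \pmod{120}$. The extra check that order $6$ cannot arise is not needed once the four $(k,\ell)$ pairs are computed, but it is correct and harmless.
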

\begin{proof}
Just as in the previous lemma, we consider cases depending on the residue of $p$ modulo 3 and use Lemmas \ref{lem:PSLcyclic} and \ref{lem:chainsaw}. The relevant data is shown in the tables below.
\begin{center}
\begin{minipage}{0.5\textwidth}
\begin{table}[H]
\begin{tabular}{ccccc}
$p \pmod{3}$ & $k$ & $\ell$ & $|x^{|G:M|}|$ & $|y^{|G:M|}|$\\
\hline
1  & 24 & 2 & 12 & 1\\
2  &  8 & 6 &  4 & 3
\end{tabular}
\caption{$M \cong S_4$, $p \equiv 1 \pmod{8}$}
\label{tbl:S4_1}
\end{table}
\end{minipage}%
\begin{minipage}{0.5\textwidth}
\begin{table}[H]
\begin{tabular}{ccccc}
$p \pmod{3}$ & $k$ & $\ell$ & $|x^{|G:M|}|$ & $|y^{|G:M|}|$\\
\hline
1  & 6 &  8 & 3 &  4\\
2  & 2 & 24 & 1 & 12
\end{tabular}
\caption{$M \cong S_4$, $p \equiv 7 \pmod{8}$}
\label{tbl:S4_7}
\end{table}
\end{minipage}
\end{center}
The group $S_4$ contains no element of order 12, but does contain elements of order 3 and elements of order 4. Moreover, in $G$, all elements of order 3 are conjugate, as are all elements of order 4. Arguing as in Lemma \ref{lem:whenA4good}, we conclude that $M$ is \wexp{} with respect to $G$ if and only if either $p \equiv 1 \pmod 8$ and $p \equiv 2 \mod 3$, or $p \equiv 7 \pmod 8$ and $p \equiv 1 \pmod 3$. Equivalently, $p \equiv 7, 17 \pmod{24}$.
\end{proof}

We are now ready to prove Theorem \ref{thm:PSLwic} in the case when $q$ is prime.

\begin{thm}
 \label{thm:PSLwic1}
 Let $p$ be a prime.  Then, $\PSL(2,p)$ is a \wic{} group if and only if $p = 2, 3, 5$ or 
  \[ p \equiv 7, 17, 43, 53, 67, 77, 103, 113 \pmod {120}.\]
\end{thm}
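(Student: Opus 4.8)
The plan is to treat the primes $p\le 11$ separately---Theorem~\ref{thm:maxPSL}, and hence Proposition~\ref{prop:PSLwicA4S4A5}, exclude these---and to reduce the case $p\ge 13$ to the three exceptional maximal subgroup types of Table~\ref{tbl:PSLmax2}. For the small primes, $\PSL(2,2)\cong S_3$ and $\PSL(2,3)\cong A_4$ are solvable and hence \wic{} by Theorem~\ref{thm:solvwic}, while $\PSL(2,5)\cong A_5$ is \wic{} by Lemma~\ref{lem:A5}; these furnish the three exceptional primes in the statement. For $p=7$, the only maximal subgroups of $\PSL(2,7)$ are conjugates of $S_4$ (index $7$) and $C_7\rtimes C_3$ (index $8$); both are solvable, hence \wic, and for any $x\in G$ the order of $x^{|G:M|}$ lies in $\{1,2,3,4\}$ (resp.\ $\{1,3,7\}$) and is therefore a prime power, so since $S_4$ contains Sylow $2$- and Sylow $3$-subgroups of $G$ (resp.\ $C_7\rtimes C_3$ contains Sylow $3$- and Sylow $7$-subgroups) Lemma~\ref{lem:sledgehammer} shows both maximal subgroups are \wexp{} with respect to $G$; by Lemma~\ref{lem:checkmax}, $\PSL(2,7)$ is then \wic, in agreement with $7\equiv 7\pmod{120}$. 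For $p=11$, $\PSL(2,11)$ contains a subgroup $M\cong A_5$ and an element of order $(11+1)/2=6$; since $\gcd(|G:M|,6)=\gcd(11,6)=1$ while $A_5$ has no element of order $6$, the subgroup $M$ is not \wexp{} with respect to $G$, so $\PSL(2,11)$ is \indexc, in agreement with $11$ lying in none of the listed classes.

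For $p\ge 13$, the key reduction is Proposition~\ref{prop:PSLwicA4S4A5}: $\PSL(2,p)$ is \wic{} if and only if every maximal subgroup isomorphic to $A_4$, $S_4$, or $A_5$ is \wexp{} with respect to $G$. A prime $p\ge 13$ satisfies $p\equiv\pm 1$ or $\pm 3\pmod{10}$ and $p\equiv\pm 1$ or $\pm 3\pmod 8$, so I would split into cases using Table~\ref{tbl:PSLmax2}. If $p\equiv\pm 1\pmod{10}$, then $A_5$ is a maximal subgroup of $G$, and by Lemma~\ref{lem:A5inPSLbad} it is not \wexp{} with respect to $G$; hence $\PSL(2,p)$ is \indexc{} (consistently, each of the eight target residues is $\equiv\pm 3\pmod{10}$). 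If $p\equiv\pm 3\pmod{10}$, then no maximal $A_5$ occurs, and since $p$ is odd exactly one of $p\equiv\pm 3\pmod 8$ or $p\equiv\pm 1\pmod 8$ holds; accordingly, by Table~\ref{tbl:PSLmax2}, the only exceptional maximal subgroup of $G$ is an $A_4$ (in the first case) or an $S_4$ (in the second). In the $A_4$ case, Lemma~\ref{lem:whenA4good} shows the maximal $A_4$ is \wexp{} with respect to $G$ exactly when $p\equiv 43,53,67,77\pmod{120}$; in the $S_4$ case, Lemma~\ref{lem:whenS4good} shows the maximal $S_4$ is \wexp{} with respect to $G$ exactly when $p\equiv 7,17,103,113\pmod{120}$. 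Assembling the subcases, and absorbing the prime $p=7$ into the residue $7$ modulo $120$, produces exactly the claimed characterization.

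The remaining work is congruence bookkeeping to confirm that the cases fit together as stated: one checks, via the Chinese Remainder Theorem applied to the conditions modulo $8$, modulo $3$, and modulo $5$ (primes being odd, a condition modulo $10$ is a condition modulo $5$), with combined modulus $120$, that the residues which are $\equiv\pm 3\pmod{10}$ and $\equiv\pm 5\pmod{24}$ are exactly $43,53,67,77$, and those which are $\equiv\pm 3\pmod{10}$ and $\equiv\pm 7\pmod{24}$ are exactly $7,17,103,113$. I expect the main obstacle to lie not in any single deep argument---the substantive work is already packaged in Lemmas~\ref{lem:A5inPSLbad}, \ref{lem:whenA4good}, and~\ref{lem:whenS4good}---but in (i) this bookkeeping and (ii) the ad hoc handling of the five small primes $2,3,5,7,11$ that are outside the scope of Theorem~\ref{thm:maxPSL} and Proposition~\ref{prop:PSLwicA4S4A5}, where one must argue directly rather than through the reduction to $A_4$, $S_4$, and $A_5$.
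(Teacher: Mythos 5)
Your proposal is correct and follows essentially the same route as the paper: dispose of $p=2,3,5,7,11$ directly, then for $p\ge 13$ reduce via Proposition~\ref{prop:PSLwicA4S4A5} to the maximal $A_4$, $S_4$, $A_5$ subgroups and invoke Lemmas~\ref{lem:A5inPSLbad}, \ref{lem:whenA4good}, and~\ref{lem:whenS4good}, finishing with the congruence bookkeeping modulo $120$. The only cosmetic difference is your $p=7$ verification via Lemma~\ref{lem:sledgehammer} and Sylow containment, where the paper instead cites conjugacy of elements of orders $2,3,4,7$; both amount to the same inspection.
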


\begin{proof}
 
First, when $p = 2$, $3$, or $5$, $\PSL(2,p)$ is isomorphic to (respectively) $S_3$, $A_4$, or $A_5$, each of which is \wic{} by Theorem \ref{thm:solvwic} or Lemma \ref{lem:A5}.  

 The proof that $\PSL(2,7)$ is \wic{} is similar to that for $A_5$. The group $\PSL(2,7)$ contains two classes of maximal subgroups isomorphic to $S_4$ (index $7$) and one conjugacy class of subgroups isomorphic to $C_7 \rtimes C_3$ (index $8$).  There is a single conjugacy class of elements of order $2$, order $3$, and order $4$, and, while there are two conjugacy classes of elements of order $7$, elements from both conjugacy classes are contained in a single $C_7 \rtimes C_3$ maximal subgroup.  Again, the result follows from inspection.
 
 Next, $\PSL(2,11)$ is not \wic: let $x$ be an element of order $6$ and let $M$ be a maximal subgroup isomorphic to $A_5$ (index $11$).  Then, $x^{|G:M|}$ has order $6$, but $A_5$ does not contain elements of order $6$.
 
 Finally, let $p \ge 13$ be prime.  By Proposition \ref{prop:PSLwicA4S4A5}, it suffices to check whether maximal subgroups isomorphic to one of $A_4$, $A_5$, or $S_4$ are \wexp.  By Lemmas \ref{lem:A5inPSLbad}, \ref{lem:whenA4good}, \ref{lem:whenS4good}, all such maximal subgroups are \wexp{} when $p \equiv \pm 3 \pmod {10}$ and when $p \equiv \pm 5, \pm 7 \pmod {24}$, i.e., exactly when 
 \[ p \equiv 7, 17, 43, 53, 67, 77, 103, 113 \pmod {120},\]
 as desired.
\end{proof}

\begin{rem}
 \label{rem:nicex}
 Lemmas \ref{lem:A5inPSLbad}, \ref{lem:whenA4good}, \ref{lem:whenS4good}, and the proof of Theorem \ref{thm:PSLwic1} show that, if $G = \PSL(2,p)$ is \indexc{}, then there exists $x \in G$ of order $(p \pm 1)/2$ and $H \le G$ such that $H$ does not contain an element of order $|x^{|G:H|}|$. 
\end{rem}

\subsection{The groups $\PSL(2,q)$ that are \wic}

This subsection is dedicated to the proof of Theorem \ref{thm:PSLwic}.  Throughout this subsection, $q$ will be a prime power.

\begin{prop}
 \label{prop:q02}
 Let $q = q_0^2$, where $q_0 \ge 4$.  Then, $G = \PSL(2,q)$ is not \wic.
\end{prop}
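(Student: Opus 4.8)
The plan is to exhibit a single subgroup $H \le G = \PSL(2,q)$, with $q = q_0^2$ and $q_0 \ge 4$, together with an element $x \in G$, witnessing that $H$ is not \wexp{} with respect to $G$. The natural candidate for $H$ is a maximal subgroup isomorphic to $\PGL(2,q_0)$, which exists by Table \ref{tbl:PSLmax4} (for $q$ odd) or arises from the subfield structure when $q$ is even (Table \ref{tbl:PSLmax3} gives $\PSL(2,q_0) \cong \PGL(2,q_0)$ in that case). First I would record $|G:H|$ and observe which primes it is divisible by: for $q$ odd, $|G:\PGL(2,q_0)| = \tfrac{(q-1)q(q+1)}{2(q_0-1)q_0(q_0+1)}$, and in particular $p \nmid |G:H|$ since the full $p$-part of $|G|$ already lies in $\PGL(2,q_0)$. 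So elements of order $p$ are harmless, and by Lemma \ref{lem:PSLcyclic}(1) it suffices to analyze an element $x$ of order $(q-1)/e$ or $(q+1)/e$ and compute the order of $x^{|G:H|}$, comparing it against the element orders available in $\PGL(2,q_0)$ (whose element orders divide $q_0 - 1$, $q_0$, or $q_0 + 1$, up to the index-2 subtlety).

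The key arithmetic step is to choose the torus — $(q-1)/e$ versus $(q+1)/e$ — whose image under the power map $x \mapsto x^{|G:H|}$ is a cyclic group too large to embed in $\PGL(2,q_0)$. Write $q = q_0^2$, so $q - 1 = (q_0-1)(q_0+1)$ and $q + 1 = q_0^2 + 1$. The factor $q_0^2 + 1$ is coprime to both $q_0 - 1$ and $q_0 + 1$ (any common prime divisor of $q_0^2+1$ and $q_0^2-1$ divides $2$), and it does not divide $q_0 - 1$, $q_0$, or $q_0 + 1$ for $q_0 \ge 4$ since $q_0^2 + 1 > q_0 + 1$. Thus if I take $x$ of order $(q+1)/e = (q_0^2+1)/e$ in $G$, then since the $(q_0^2+1)$-part of $|G:H|$ is trivial (the denominator $(q_0-1)q_0(q_0+1)$ is coprime to $q_0^2+1$, and the numerator's $(q+1)$-factor is $q_0^2+1$ exactly), the element $x^{|G:H|}$ still has order essentially $(q_0^2+1)/e$ — certainly a cyclic subgroup of order divisible by some prime power exceeding $q_0 + 1$. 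No conjugate of $\PGL(2,q_0)$ contains such an element, because every element of $\PGL(2,q_0)$ has order dividing $q_0(q_0^2-1)$. Hence $x^{|G:H|} \notin H^g$ for any $g \in G$, so $H$ is not \wexp, and $G$ is \indexc. I would handle the characteristic-2 case identically with $e = 1$ and $H \cong \PSL(2,q_0)$.

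The main obstacle I anticipate is purely bookkeeping: verifying cleanly that the power $x^{|G:H|}$ genuinely retains a large enough order, i.e. that $\gcd\!\big((q+1)/e,\,|G:H|\big)$ is small — equivalently that the $(q+1)$-part of $|G:H|$ is exactly $(q+1)/e$ divided by at most a factor of $2$. This reduces to the coprimality of $q_0^2+1$ with $(q_0-1)q_0(q_0+1)$, which is elementary, plus tracking the powers of $2$ carefully when $q$ is odd (where $e = 2$ and the index carries an extra factor of $2$ in the denominator). One should also double-check the edge cases $q_0 = 4, 5, 7, 9$ against the hypothesis $q \ne 5,7,9,11$ governing Theorem \ref{thm:maxPSL}; here $q = q_0^2 \in \{16, 25, 49, 81\}$, all comfortably outside the excluded list, so Table \ref{tbl:PSLmax3}/\ref{tbl:PSLmax4} applies and the argument goes through uniformly.
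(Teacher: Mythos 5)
Your overall strategy (use the subfield maximal subgroup $H \cong \PGL(2,q_0)$ and find a torus element whose $|G:H|$-th power has order too large for $\PGL(2,q_0)$) is the same as the paper's, but you picked the wrong torus, and the arithmetic as stated does not work. With $q = q_0^2$ and $e = \gcd(q-1,2)$, the index is
\[
|G:H| \;=\; \frac{q(q^2-1)/e}{q_0(q_0^2-1)} \;=\; \frac{q_0(q+1)}{e},
\]
because the factor $q-1 = (q_0-1)(q_0+1)$ in $|G|$ is cancelled by $|{\PGL(2,q_0)}| = q_0(q_0^2-1)$. So the $(q+1)$-part of $|G:H|$ is not trivial --- it is essentially all of $(q+1)/e$. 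Your own parenthetical remark ("the numerator's $(q+1)$-factor is $q_0^2+1$ exactly, and the denominator is coprime to it") proves exactly this, i.e., the opposite of what you assert. Consequently, if $x$ has order $(q+1)/e$, then $(q+1)/e$ divides $|G:H|$ and $x^{|G:H|} = 1$, which lies in every conjugate of $H$; such an element is useless as a witness. The direction of the gcd condition is reversed in your write-up: for $x^{|G:H|}$ to retain large order you need $\gcd(|x|,|G:H|)$ to be \emph{small}, not the corresponding part of the index to be large.

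The fix is the paper's choice: take $x$ of order $(q-1)/e$. Since $|G:H| = q_0(q+1)/e$ is coprime to $(q-1)/e$ (the split-torus order is coprime to $q_0$, and $\gcd\bigl((q-1)/e,(q+1)/e\bigr)=1$), the element $x^{|G:H|}$ still has order $(q-1)/e = (q_0^2-1)/e$, which exceeds $q_0+1$, the maximal element order in $\PGL(2,q_0)$. Hence no conjugate of $H$ contains it, $H$ is not \wexp{} with respect to $G$, and $G$ is not \wic. Your handling of the even case and the check that $q = q_0^2 \ge 16$ avoids the excluded values in Theorem \ref{thm:maxPSL} are fine, but as written the central step of your argument fails.
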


\begin{proof}
By Theorem \ref{thm:maxPSL}, $G$ contains a maximal subgroup $M$ isomorphic to $\PGL(2,q_0)$.  If $e = \gcd(q-1,2)$, then $|G:M| = q_0(q + 1)/e$.
 
 Let $x$ be an element of order $(q-1)/e$ in $G$.  Since $\gcd(|G:M|, (q-1)/e) = 1$, the element $x^{|G:M|}$ has order $(q-1)/e$.  Since the maximum order of an element in $M \cong \PGL(2,q_0)$ is $q_0 + 1$ and 
 \[ q_0 + 1 < \frac{q_0^2 - 1}{e} = \frac{q - 1}{e}, \]
 we see that $x^{|G:M|}$ is not in any conjugate of $M$ in $G$.  Thus, $M$ is not \wexp, and hence $G$ is not \wic.
\end{proof}

\begin{lem}
 \label{lem:powerupPSLq}
Let $q = q_0^n$, where $n$ is an odd integer. 
 Let $G = \PSL(2,q)$, $H$ be a subgroup of $G$ isomorphic to $\PSL(2,q_0)$, and $e = \gcd(q-1, 2)$.
 \begin{enumerate}[(1)]
  \item $|G:H| = k \cdot \ell \cdot m$, where $k = (q-1)/(q_0 - 1)$, $\ell = q/q_0$, $m = (q+1)/(q_0 + 1)$,  and $k$, $\ell$, and $m$ are pairwise coprime.
  \item If $x$ is an element of order $p$ in $G$, then $x^{|G:H|} = 1$.
  \item If $x$ is an element of order $(q - 1)/e$ in $G$, then $x^{|G:H|}$ has order $(q_0 - 1)/e$ and is contained in a conjugate of $H$ in $G$.
  \item If $x$ is an element of order $(q + 1)/e$ in $G$, then $x^{|G:H|}$ has order $(q_0 + 1)/e$ and is contained in a conjugate of $H$ in $G$.
  \item $H$ is \wexp{} with respect to $G$.
 \end{enumerate}
\end{lem}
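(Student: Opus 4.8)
The plan is to prove (1) by a direct computation with group orders, deduce (2)--(4) from the arithmetic in (1) together with Lemma~\ref{lem:PSLcyclic}, and then obtain (5) as a formal consequence of (2)--(4). I will assume $n \ge 3$ (the case $n=1$, where $H = G$, is degenerate and of no interest).

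For (1), the first observation is that $q$ and $q_0$ are powers of the same prime $p$, hence have the same parity, so $e = \gcd(q-1,2) = \gcd(q_0-1,2)$ and $|\PSL(2,q_0)| = q_0(q_0-1)(q_0+1)/e$. Dividing $|G|$ by $|H|$ then gives $|G:H| = \ell k m$ at once. I would check that $\ell = q_0^{n-1}$ is a power of $p$; that $k = 1 + q_0 + \dots + q_0^{n-1}$ is an integer because $q_0 - 1 \mid q_0^n - 1$; and that $m = 1 - q_0 + \dots + q_0^{n-1}$ is an integer because $n$ is odd. For pairwise coprimality: $k$ and $m$ divide $q-1$ and $q+1$ respectively and so are coprime to $p$, whence $\gcd(\ell, km) = 1$; and $\gcd(k,m)$ divides $\gcd(q-1,q+1) = e \le 2$, but when $p$ is odd each of $k$ and $m$ is a sum of $n$ odd integers, hence odd, so $\gcd(k,m) = 1$. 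Oddness of $n$ is used essentially in both of the last two points.

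Part (2) is immediate: $|x| = p$ divides $\ell$, which divides $|G:H|$, so $x^{|G:H|} = 1$. For (3), using (1) one finds $\gcd\bigl((q-1)/e,\,|G:H|\bigr) = k$: indeed $\gcd((q-1)/e,\ell) = 1$ since $\ell$ is a power of $p$; $\gcd((q-1)/e, m)$ divides $\gcd(q-1,q+1) = e$ and is odd (as $m$ is odd), hence equals $1$; and $k$ divides $(q-1)/e$ since $e \mid q_0-1$. Therefore $|x^{|G:H|}| = \bigl((q-1)/e\bigr)/k = (q_0-1)/e$. Now $\langle x^{|G:H|}\rangle$ is a cyclic subgroup of $G$ of order $(q_0-1)/e$; the subgroup $H \cong \PSL(2,q_0)$ contains a cyclic subgroup of that order by Lemma~\ref{lem:PSLcyclic}(1), and all cyclic subgroups of $G$ of a fixed order are conjugate by the same lemma, so $\langle x^{|G:H|}\rangle$ lies inside some conjugate $H^g$. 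Part (4) is proved identically with the roles of $q-1$ and $q+1$ (equivalently $k$ and $m$) interchanged, giving $\gcd\bigl((q+1)/e,\,|G:H|\bigr) = m$, $|x^{|G:H|}| = (q_0+1)/e$, and again $x^{|G:H|} \in H^g$ for a suitable $g$.

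Part (5) then follows formally: by Lemma~\ref{lem:PSLcyclic}(1), an arbitrary $x \in G$ lies in a cyclic subgroup $\langle t\rangle$ with $|t| \in \{(q-1)/e,\, p,\, (q+1)/e\}$, so writing $x = t^j$ we get $x^{|G:H|} = (t^{|G:H|})^j \in \langle t^{|G:H|}\rangle$, and by (2)--(4) there is $g \in G$ with $t^{|G:H|} \in H^g$; hence $x^{|G:H|} \in H^g$, proving $H \lwexp G$. The main obstacle is part (1): the key realization is that $n$ odd is exactly what forces both $q_0+1 \mid q_0^n+1$ and the oddness of $k$ and $m$, which together yield the pairwise coprimality of $k,\ell,m$. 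That coprimality is precisely what makes the greatest-common-divisor computations in (3) and (4) collapse to $k$ and $m$, pinning the orders down to $(q_0\mp 1)/e$; once this is secured, the remainder is bookkeeping plus the conjugacy of fixed-order cyclic subgroups of $\PSL(2,q)$ furnished by Lemma~\ref{lem:PSLcyclic}(1).
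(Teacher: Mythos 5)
Your proposal is correct and follows essentially the same route as the paper: the order computation $|G:H| = k\ell m$ with pairwise coprimality (using oddness of $n$), the reduction of element orders via the gcd with $|G:H|$, conjugacy of cyclic subgroups of a fixed order from Lemma~\ref{lem:PSLcyclic}(1), and the reduction of (5) to elements of the three maximal orders. The only differences are expository: you spell out the gcd computations and the oddness of $k$ and $m$ that the paper's proof leaves implicit.
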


\begin{proof}
 First, since $q = q_0^n$, $n$ odd, we have $|G:H| = k \cdot \ell \cdot m$ as in the statement of (1), where $k$, $\ell$, and $m$ are integers.  It is immediate that $\gcd(k, \ell) = \gcd(\ell, m) = 1$.  Since $\gcd(q - 1, q + 1) = e \le 2$ and $q_0 \pm 1$ is even if and only if $e = 2$, we see that $\gcd(k,m) = 1$ as well, proving (1).
 
 For (2), if $x$ is an element of order $p$, then $p$ divides $\ell$, and so
 \[ x^{|G:H|} = x^\ell = 1.\]
 
 For (3), if $x$ is an element of order $(q-1)/e$, then, by (1), we see that $|x^{|G:H|}| = |x^k|$, and so $x^{|G:H|}$ has order
 \[ \frac{\phantom{x}\frac{q-1}{e}\phantom{x}}{\frac{q-1}{q_0 - 1}} = \frac{q_0 - 1}{e}.\]
 By Lemma \ref{lem:PSLcyclic}(1), all cyclic subgroups of order $(q_0 - 1)/e$ are conjugate in $G$.  Since $H$ contains a cyclic subgroup of order $(q_0 - 1)/e$, we see that $x^{|G:H|}$ is contained in some conjugate of $H$, as desired. 
 
 The proof for (4) is analogous to the proof of (3).

To prove (5), by Lemma \ref{lem:PSLcyclic}(1), it suffices to check elements of order $(q - 1)/e$, $p$, and $(q + 1)/e$ in $G$.  If $x$ is an element of one of these orders, then, by the previous parts of the lemma, $x^{|G:H|}$ is contained in a conjugate of $H$, showing that $H \lwexp G$.
\end{proof}

\begin{prop}
\label{prop:qdependsonp}
 Let $q = p^d$, where $d$ is odd, and let $G = \PSL(2,q)$.  Then, $G$ is \wic{} if and only if $\PSL(2,p)$ is \wic.
\end{prop}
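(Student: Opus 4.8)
The plan is to induct on $d$. The case $d=1$ is vacuous, so assume $d$ is odd with $d\ge 3$ and that the equivalence holds for all smaller odd exponents. Write $G=\PSL(2,q)$ with $q=p^d$. Since $d\ge 3$ we have $q\ge 8$ and $q\notin\{5,7,9,11\}$, so Theorem \ref{thm:maxPSL} applies: because $d$ is odd, the only entries from Tables \ref{tbl:PSLmax3} and \ref{tbl:PSLmax4} that can occur are the subfield subgroups $\PSL(2,q_0)$ with $q=q_0^s$ for an odd prime $s$ (the $A_5$ and $\PGL(2,q_0)$ entries require $d$ even), alongside the three solvable families of Table \ref{tbl:PSLmax1}. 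Thus every maximal subgroup of $G$ is either solvable or isomorphic to some $\PSL(2,q_0)$ with $q_0=p^{d/s}$, $d/s$ odd. I will apply Lemma \ref{lem:checkmax}, which requires checking, for each maximal $M\le G$, that $M$ is itself \wic{} and that $M\lwexp G$.

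For ``$\PSL(2,p)$ \wic{} $\Rightarrow$ $G$ \wic'', assume $\PSL(2,p)$ is \wic. The solvable maximal subgroups of $G$ are \wic{} by Theorem \ref{thm:solvwic} and are \wexp{} with respect to $G$ by Proposition \ref{prop:PSLmaxgood}. Each subfield maximal subgroup $M\cong\PSL(2,q_0)$, $q_0=p^{d/s}$, satisfies $M\lwexp G$ by Lemma \ref{lem:powerupPSLq}(5); and since $d/s$ is odd and $d/s<d$, the inductive hypothesis gives that $M\cong\PSL(2,q_0)$ is \wic{} (because $\PSL(2,p)$ is). Hence every maximal subgroup of $G$ is \wic{} and \wexp{} with respect to $G$, and $G$ is \wic{} by Lemma \ref{lem:checkmax}.

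For the reverse implication I argue the contrapositive: if $\PSL(2,p)$ is \indexc, then $G$ is \indexc. First, $p\notin\{2,3,5,7\}$, since $\PSL(2,p)$ is \wic{} for these primes, so $p\ge 11$; by Lemmas \ref{lem:A5inPSLbad}, \ref{lem:whenA4good}, \ref{lem:whenS4good} (and, for $p=11$, the computation in the proof of Theorem \ref{thm:PSLwic1}), there are a subgroup $H_0\le\PSL(2,p)$ isomorphic to one of $A_4$, $S_4$, $A_5$ and an integer $n\in\{6,10,12,15,30\}$ such that $H_0$ has no element of order $n$ and $|w^{|\PSL(2,p):H_0|}|=n$ for \emph{every} $w\in\PSL(2,p)$ of order $(p-1)/2$ (or else the same statement with $(p+1)/2$); that all such $w$ produce the same value $n$ follows from Lemma \ref{lem:chainsaw}. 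Now regard $\PSL(2,p)$ as a subfield subgroup $G_0=\PSL(2,p)\le G$. Since $q=p^d$ with $d$ odd, Lemma \ref{lem:powerupPSLq}(3) (resp.\ (4)) yields an element $z\in G$ of order $(q-1)/e$ (resp.\ $(q+1)/e$) such that $z^{|G:G_0|}$ has order $(p-1)/2$ (resp.\ $(p+1)/2$) and lies in a conjugate $G_0^{\,g}$; replacing $z$ by $z^{g^{-1}}$ we may assume $x_0\colonequals z^{|G:G_0|}\in G_0$. As $|G:H_0|=|G:G_0|\cdot|G_0:H_0|$,
\[ z^{|G:H_0|}=\bigl(z^{|G:G_0|}\bigr)^{|G_0:H_0|}=x_0^{\,|G_0:H_0|}, \]
which has order $n$ by the choice of $H_0$. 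Every conjugate of $H_0$ in $G$ is isomorphic to $A_4$, $S_4$, or $A_5$, hence contains no element of order $n$; so $z^{|G:H_0|}$ lies in no conjugate of $H_0$, $H_0$ is not \wexp{} with respect to $G$, and $G$ is \indexc.

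The delicate point is the reverse implication: passing from the index $|\PSL(2,p):H_0|$ to the larger index $|G:H_0|$ might a priori collapse the order of the critical element and destroy the obstruction. The remedy is Lemma \ref{lem:powerupPSLq}, which shows that raising an appropriate torus generator of $G$ to the power $|G:G_0|$ lands exactly on an element of order $(p\pm1)/2$ inside a conjugate of $G_0=\PSL(2,p)$, so that the residual exponent $|G_0:H_0|$ then acts exactly as it did inside $\PSL(2,p)$; together with the observation (via Lemma \ref{lem:chainsaw}) that \emph{any} element of order $(p\pm1)/2$ serves equally well as a witness, this reproduces the obstruction verbatim in $G$.
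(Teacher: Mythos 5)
Your proposal is correct and follows essentially the same route as the paper: the forward direction is the same induction on the odd exponent using Lemma \ref{lem:checkmax}, Proposition \ref{prop:PSLmaxgood}, and Lemma \ref{lem:powerupPSLq}(5), and the reverse direction transfers the obstruction from $\PSL(2,p)$ into the subfield copy via Lemma \ref{lem:powerupPSLq}(3),(4). The only cosmetic difference is in the converse: the paper quotes Remark \ref{rem:nicex} and adjusts the torus element (replacing $x$ by $x^a$ so that its $|G:H|$-th power is the chosen witness), whereas you instead invoke the uniformity of $|w^{|\PSL(2,p):H_0|}|$ over all $w$ of order $(p\pm1)/2$ coming from Lemma \ref{lem:chainsaw} (with $p=11$ handled separately); both devices accomplish the same step.
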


\begin{proof}
 Assume $\PSL(2,p)$ is \wic.  We proceed by induction on the number of (not necessarily distinct) primes in a factorization of $d$.  First, let $d$ be an odd prime.  Then, by Theorem \ref{thm:maxPSL}, and Proposition \ref{prop:PSLmaxgood}, it suffices to consider maximal subgroups of $G = \PSL(2,p^d)$ isomorphic to $\PSL(2,p)$, which by Lemma \ref{lem:powerupPSLq}(5) are \wexp{} with respect to $G$.  Thus, by Lemma \ref{lem:checkmax}, $\PSL(2,q)$ is \wic{} when $q = p^d$, $d$ an odd prime.  
 
 Now, assume the result is true when $d$ is a product of $n - 1$ primes, where $n \ge 2$.  So, assume $d$ is the product of $n$ primes.  Again, by Theorem \ref{thm:maxPSL}, and Proposition \ref{prop:PSLmaxgood}, it suffices to consider maximal subgroups of $G = \PSL(2,p^d)$ isomorphic to $\PSL(2, q_0)$, where $q = q_0^r$, $r$ prime.  Since $q_0 = p^{d/r}$ and $d/r$ is the product of $n - 1$ primes, $\PSL(2, q_0)$ is \wic. Moreover, $\PSL(2,q_0)$ is \wexp{} with respect to $G$ by Lemma \ref{lem:powerupPSLq}(5).  Hence, $G$ is itself \wic.  Therefore, if $d$ is odd, $\PSL(2, p^d)$ is \wic{} if $\PSL(2,p)$ is. 
 
Conversely, suppose that $p$ is such that $\PSL(2,p)$ is not \wic, and let $H \le G$ be isomorphic to $\PSL(2,p)$.  By Remark \ref{rem:nicex}, we may assume that there exists $h \in H$ of order either $(p - 1)/2$ or $(p + 1)/2$ and $L \le H$ such that $L$ does not contain an element of order $|h^{|H:L|}|$.  By Lemma \ref{lem:PSLcyclic}, $h$ is contained in $\langle x \rangle$, where the order of $x$ is $(q - 1)/2$ or $(q + 1)/2$, respectively.  By Lemma \ref{lem:powerupPSLq}(3),(4), $x^{|G:H|}$ has order $(p - 1)/2$ or $(p+1)/2$, respectively, and hence $\langle x^{|G:H|} \rangle = \langle h \rangle$. Consequently, $h = (x^a)^{|G:H|}$ for some integer $a$.  This means that
 \[ (x^a)^{|G:L|} = ( (x^a)^{|G:H|})^{|H:L|} = h^{|H:L|}, \] 
 and, since $L$ does not contain any elements of this order, $(x^a)^{|G:L|}$ is not in any conjugate of $L$ in $G$.  Therefore, $L$ is not \wexp{} with respect to $G$, and $G$ is not \wic, completing the proof.
\end{proof}

We now have enough results to prove Theorem \ref{thm:PSLwic}.

\begin{proof}[Proof of Theorem \ref{thm:PSLwic}]
 Let $p$ be a prime, $q = p^d$, and $G = \PSL(2,q)$.  If $d = 1$, then the result follows from Theorem \ref{thm:PSLwic1}, so we assume $d \ge 2$.  Suppose first that $d$ is even. If $p = 2$ and $d = 2$, then $G \cong A_5$ and is \wic{} by Lemma \ref{lem:A5}, and, if $p = 3$ and $d = 2$, then $G \cong A_6$ and is \wic{} by Lemma \ref{lem:A6}. Otherwise, $q \ge 16$ and $G$ is not \wic{} by Proposition \ref{prop:q02}.    Finally, if $d$ is odd, then $G$ is \wic{} if and only if $\PSL(2,p)$ is \wic{} by Proposition \ref{prop:qdependsonp}, completing the proof.
\end{proof}

\bibliographystyle{plainurl}
\bibliography{Index}
 
\end{document}